\titleformat{\section}{\centering\large\bfseries}{\S\arabic{section}}{1em}{}
 \def\be{\begin{eqnarray}}
\def\ee{\end{eqnarray}}
\newcommand{\hs}{\hskip 0.3cm}
\renewcommand{\theequation}{\arabic{equation}}
\newcommand{\D}{\displaystyle}
\newcommand{\myref}[1]{(\ref{#1})}
\newcommand{\mathd}{\mathrm{d}}
\newenvironment{proof}{\noindent\textbf{Proof\ }}{\hspace*{\fill}$\Box$\medskip}
\newtheorem{theorem}{\textbf{Theorem}}[section]
\newtheorem{prop}[theorem]{\textbf{Proposition}}
\newtheorem{lemma}[theorem]{\textbf{Lemma}}
\newtheorem{rem}[theorem]{\textbf{Remark}}
\renewcommand{\theequation}{\thesection.\arabic{equation}}
\begin{document}
\setlength\abovedisplayskip{2pt}
\setlength\abovedisplayshortskip{0pt}
\setlength\belowdisplayskip{2pt}
\setlength\belowdisplayshortskip{0pt}
\title{\bf \Large  On one multidimensional compressible nonlocal model of the dissipative QG equations
\author{Shu Wang, Linrui Li and Shengtao Chen\\
College of Applied Sciences, Beijing University
of Technology,\\ PingLeYuan100, Chaoyang District, Beijing100124, P.
R. China\\
{E-mail:} wangshu@bjut.edu.cn;
lilinrui@emails.bjut.edu.cn;\\ chensht08@emails.bjut.edu.cn}\date{\,\,}
}
\maketitle
% \footnote{{2000 Mathematics Subject
%Classification: 35R10; 35A05; 35A07; 35L60.}   }
%\footnote{ }
\begin{minipage}{135mm} %%%%%%%%%%%%%%%%%%%%%%%%%%%%%%%%%%%%%%%%%%%%%%%%%%%%摘要部分
{\bf \small Abstract.}\hskip 2mm {\small In this paper we study the
Cauchy problem for one multidimensional compressible nonlocal model
of the dissipative quasi-geostrophic equations. First, we obtain the
local existence and uniqueness of the smooth non-negative solution
or the strong solution in time. Secondly, for the sub-critical and
critical case $1\le\alpha\leq 2$, we obtain the global existence and
uniqueness results of the nonnegative smooth solution. Then, we
prove the global existence of the weak solution for $0\le \alpha\le
2$ and $\nu\ge 0$. Finally, for the sub-critical case, we establish
the global $H^1$ and $L^p, p>2,$ decay rate of the smooth solution
as $t\to\infty$.}

{\bf \small Keywords:} {\small multidimensional compressible nonlocal model;
dissipative quasi-geostrophic equations; super-critical case;
sub-critical case}
\end{minipage}

\thispagestyle{fancyplain} \fancyhead{}
\fancyhead[L]{\textit{}\\
 } \fancyfoot{} \vskip 10mm
\setcounter{section}{1} \setcounter{equation}{0} %%%%%%%%%%%%%%%%%%%%%%%%%%%%%%%%%%%%%%%%%第一部分
\renewcommand{\theequation}
{\thesection.\arabic{equation}}
\begin{center}
{\large\bf\S\bf1\hspace*{3mm} Introduction and main results}
\end{center}
%%%%%%%%%%%%%%%%%%%%%%%%%%%%%%%%%%%%%%%%%%%%%%%%%%%%%%%%%%%%%%%%%%%%%%%%%%%%%%%%%%%%
In this paper, we study the following Cauchy problem for one
multidimensional compressible nonlocal model of the dissipative
quasi-geostrophic equations:
\begin{equation}\label{1.1}
\left\{
\begin{array}{lll}\partial_t\theta+div(u \theta)+\nu(-\Delta)^\frac{\alpha}{2}\theta=0,\hs x\in \mathbb{R}^N, t>0,\\
u=\mathcal {R}\theta=(\mathcal {R}_1 \theta, \mathcal {R}_2\theta, \cdots, \mathcal {R}_N\theta),\\
\theta(x, 0)=\theta_0(x).\end{array}\right.
\end{equation}
where $\theta: \mathbb{R}^N\rightarrow \mathbb{R}$ is a scalar
function of $x$ and $t$, representing potential temperature, $u(x,
t)$ is the velocity field of fluid given by the Riesz transform
$u=\mathcal {R}\theta=(\mathcal {R}_1\theta,\mathcal{R}_2\theta,
\cdots, \mathcal{R}_N\theta)$, defined by $$\mathcal
{R}_i(\theta)(x, t)=\frac{1}{(2\pi)^N}P.
V.\int_{\mathbb{R}^N}\frac{(x_i-y_i)\theta(y, t)}{|x-y|^{N+1}}dy,
i=1, 2,\cdots, N.$$ $\nu>0$ is the dissipative coefficient and
$N\geq 1$. We denote
$\Lambda^\alpha=(-\Delta)^{\frac{\alpha}{2}}$, which is defined by
the Fourier transform $F(\Lambda^\alpha)=F((-\Delta)^{\frac
\alpha2})=|\xi|^\alpha$. Moreover, for $0<\alpha\le 2$,
$\Lambda^\alpha\theta$ is given (see e.g. \cite{14,ju}) by
\be\Lambda^\alpha\theta(x)=C_\alpha
P.V.\int_{\mathbb{R}^N}\frac{\theta(x)-\theta(y)}{|x-y|^{N+\alpha}}dy,
x\in \mathbb{R}^N\label{reiz}\ee and, especially, $\Lambda\theta$
is given by \be\Lambda\theta(x)=C(N)
P.V.\int_{\mathbb{R}^N}\frac{\theta(x)-\theta(y)}{|x-y|^{N+1}}dy=div
\mathcal{R}\theta(x), x\in \mathbb{R}^N.\label{reiz1}\ee One
particular feature of system (\ref{1.1}) is the relation with the
dissipative quasi-geostrophic equations \cite{CMT}, which is
easily derived by changing the incompressible velocity field
$u=(-\mathcal {R}_2\theta,\mathcal {R}_1\theta)$ of surface QG
equations into the compressible velocity field $u=(\mathcal
{R}_1\theta,\mathcal {R}_2\theta)$ for $N=2$.  The system
\eqref{1.1} with $N=1$ and $\nu\ge 0$ was displayed by Baker et al
in \cite{BLM} as a one-dimensional model of the 2D Vortex sheet
problem, and was further investigated by D. Chae, et al
\cite{CCCF} and Castro and C\'{o}rdoba \cite{CC} and global
existence, finite time singularities and ill-posedness was
discussed therefore by using the theory of complex-value partial
differential equations. However, the methods used by D. Chae, et
al \cite{CCCF} and Castro and C\'{o}rdoba \cite{CC} can not be
applied to the present multidimensional problem. Hence, the main
purpose of this paper is that we extend the results for the ND
model \eqref{1.1} with $N=1$ given by D. Chae, et al \cite{CCCF}
and Castro and C\'{o}rdoba \cite{CC} to the general ND model
\eqref{1.1} for $N\ge 1$ by using completely different methods.

It should be pointed out that some multidimensional models related
to the dissipative or inviscid quasi-geostrophic equations have been
studied by many authors. P. Balodis and A. C\'ordoba \cite{BC2007}
discuss the blow-up problem for a class of nonlinear and nonlocal
transport equations by using an inequality for Riesz transforms. A.
Castro, D. Cordoba et al \cite{castro} study heat transfer with a
general fractional diffusion term of incompressible fluid in a
porous medium governed by Darcy's law and obtain local and global
wellposedness for the strong or weak solutions and the existence of
the global attractor for the solutions. The global existence and
finite time blow-up problems for the aggregation equations are
studied in \cite{thomas, 10, li} for some different singular
potentials. Recently, a porous medium equation with nonlocal
diffusion effects given by an inverse fractional Laplacian operator,
i.e,
\begin{equation}\partial_tu=\nabla\cdot(u\nabla p), p=(-\Delta)^{-s}u,
0<s<1, \label{por}\end{equation} is studied by L. Caffarelli and L.
Vazquez \cite{CV}. The existence of the global weak solution for the
nonnegative and bounded initial data function with compact support
or fast decay at infinity is proven. The existence and uniqueness of
the local or global smooth solution remain open. Notice that the
model \eqref{por} with $s=\frac 12$ have a different sign from the
ND model \eqref{1.1}. We will show that the local or global
existence of smooth solutions in time depends heavily upon the sign
of the solutions or the sign of initial data. In particular, if the
initial data $u(t=0)\le 0$ is a smooth function, then the model
\eqref{por} with $s=\frac 12$ and initial data $u(t=0)$ has a smooth
solution locally in time.

In this paper, we will investigate the general multidimensional
compressible nonlocal flux \eqref{1.1} for the case
$0\leq\alpha\leq2$, $\nu\ge 0$ and for the nonnegative initial data.
Here the case $\alpha=1$ is called the critical case, the case
$1<\alpha\leq2$ is so-called sub-critical one and the case
$0\leq\alpha<1$ is super-critical one. Roughly speaking, the
critical and super-critical cases are mathematically harder to deal
with than the sub-critical case.

We now state our main results. First, we give the following local
existence result for the smooth solution to the system \eqref{1.1}
with $\nu\ge 0$.
\begin{theorem}\label{thm1.1}
Let $ 0\le\alpha\le 2$ and $\nu\geq 0$. Assume that the initial data
$\theta_0\geq 0$ and $\theta_0\in H^s(\mathbb{R}^N)$ for some
positive integer $s>\frac{N}{2}+1$. Then (\ref{1.1}) has a unique
smooth solution $\theta\in C([0, T^*); H^s(\mathbb{R}^N))\bigcap
C^1([0, T^*); H^{s-2}(\mathbb{R}^N))$, defined on $[0, T^*)$,
$T^*=T(\|\theta_0\|_{H^s(\mathbb{R}^N)})>0$ is the maximal existence
time. Moreover, if $T^*<\infty$, then $$\int_0^T\|\theta(\cdot,
t)\|_{H^s}dt=\infty$$ or
$$\int_0^T(\|\Lambda\theta\|_{L^\infty}+\|\nabla\theta\|_{L^\infty}+\|\nabla
u\|_{L^\infty})dt=\infty.$$
\end{theorem}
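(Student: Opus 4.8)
The plan is to recast the nonlinearity in transport-plus-reaction form, build approximate solutions by vanishing viscosity, and close a single $H^s$ energy estimate whose only dangerous term is neutralized by the sign of $\theta$. Since $\mathrm{div}\,u=\mathrm{div}\,\mathcal{R}\theta=\Lambda\theta$ by \eqref{reiz1}, the equation in \eqref{1.1} is equivalent to
\[
\partial_t\theta + u\cdot\nabla\theta + \theta\,\Lambda\theta + \nu\Lambda^\alpha\theta = 0,
\qquad u=\mathcal{R}\theta .
\]
First I would regularize by adding artificial viscosity, solving $\partial_t\theta^\e + u^\e\cdot\nabla\theta^\e + \theta^\e\Lambda\theta^\e + \nu\Lambda^\alpha\theta^\e = \e\Delta\theta^\e$ with $u^\e=\mathcal{R}\theta^\e$; for fixed $\e>0$ this is parabolic and solvable on a short interval by a fixed-point/semigroup argument in $H^s$. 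A maximum-principle computation at a spatial minimum---where $\nabla\theta^\e=0$ and $\Lambda\theta^\e,\Lambda^\alpha\theta^\e\le 0$, while $\e\Delta\theta^\e\ge 0$---shows $\min_x\theta^\e(\cdot,t)$ cannot cross zero, so $\theta_0\ge 0$ propagates to $\theta^\e\ge 0$.

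The core is a uniform-in-$\e$ a priori estimate. Applying $\partial^\beta$, $|\beta|\le s$, pairing with $\partial^\beta\theta$ and summing, the transport term splits as $u\cdot\nabla\partial^\beta\theta$ plus a commutator; the former integrates by parts to a term bounded by $\tfrac12\|\Lambda\theta\|_{L^\infty}\|\partial^\beta\theta\|_{L^2}^2$, while the latter is controlled by the Kato--Ponce estimate $\|[\partial^\beta,u\cdot\nabla]\theta\|_{L^2}\le C(\|\nabla u\|_{L^\infty}\|\theta\|_{\dot H^s}+\|\nabla\theta\|_{L^\infty}\|u\|_{\dot H^s})$, after which boundedness of the Riesz transform gives $\|u\|_{H^s}\le C\|\theta\|_{H^s}$. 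The reaction term $\theta\Lambda\theta$ is the crux: its top-order piece is $\int\theta\,\phi\,\Lambda\phi$ with $\phi=\partial^\beta\theta$, which formally carries $s+1$ derivatives and cannot be absorbed by $\|\theta\|_{H^s}$ (nor by the dissipation when $\nu=0$ or $\alpha<1$). Here I would invoke the pointwise C\'ordoba--C\'ordoba identity $2\phi\Lambda\phi=\Lambda(\phi^2)+D[\phi]$ with $D[\phi]=C\,\mathrm{P.V.}\!\int_{\mathbb{R}^N}\frac{(\phi(x)-\phi(y))^2}{|x-y|^{N+1}}\,dy\ge 0$, so that $\int\theta\phi\Lambda\phi=\tfrac12\int(\Lambda\theta)\phi^2+\tfrac12\int\theta\,D[\phi]$. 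Because $\theta\ge 0$, the second term is nonnegative and is simply discarded to the good side; the first is bounded by $\|\Lambda\theta\|_{L^\infty}\|\phi\|_{L^2}^2$. The remaining lower-order commutator pieces of $\partial^\beta(\theta\Lambda\theta)$ are handled by the fractional Leibniz rule.

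Collecting terms yields, uniformly in $\e$,
\[
\frac{d}{dt}\|\theta\|_{H^s}^2 + \nu\|\theta\|_{\dot H^{s+\alpha/2}}^2
\le C\big(\|\Lambda\theta\|_{L^\infty}+\|\nabla\theta\|_{L^\infty}+\|\nabla u\|_{L^\infty}\big)\|\theta\|_{H^s}^2 .
\]
Since $s>\tfrac N2+1$, the embedding $H^s\hookrightarrow W^{1,\infty}$ bounds the bracket by $C\|\theta\|_{H^s}$, giving $\frac{d}{dt}\|\theta\|_{H^s}\le C\|\theta\|_{H^s}^2$ and hence a uniform bound on $[0,T^*]$ with $T^*\ge c/\|\theta_0\|_{H^s}$. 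With these bounds I would extract a limit $\theta$ by Aubin--Lions compactness as $\e\to 0$; the equation then shows $\partial_t\theta\in C([0,T^*);H^{s-2})$ (note $\theta\Lambda\theta\in H^{s-1}$ since $s-1>\tfrac N2$, and $\nu\Lambda^\alpha\theta\in H^{s-\alpha}\subset H^{s-2}$), so $\theta\in C([0,T^*);H^s)\cap C^1([0,T^*);H^{s-2})$ with $\theta\ge 0$.

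Uniqueness and the blow-up criterion follow from the same machinery. For uniqueness, $w=\theta_1-\theta_2$ solves a linear equation with coefficients controlled by $\|\theta_i\|_{H^s}$; the $L^2$ energy estimate again produces the sign-definite term $\tfrac12\int\theta_1 D[w]\ge 0$ (using that any solution issuing from $\theta_0\ge0$ is nonnegative by the maximum-principle argument applied directly), and Gronwall forces $w\equiv 0$. For the blow-up alternative, keeping the $L^\infty$ norms explicit in the displayed inequality and applying Gronwall shows that finiteness of $\int_0^{T^*}(\|\Lambda\theta\|_{L^\infty}+\|\nabla\theta\|_{L^\infty}+\|\nabla u\|_{L^\infty})\,dt$ keeps $\|\theta\|_{H^s}$ bounded, contradicting maximality; the alternative $\int_0^{T^*}\|\theta\|_{H^s}\,dt=\infty$ then follows from the Sobolev bound on the bracket. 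The main obstacle throughout is precisely the top-order reaction term, and the decisive point is that the nonnegativity of $\theta$---rather than any dissipation---renders it harmless.
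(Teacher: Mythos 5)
Your proposal is correct and follows essentially the same route as the paper: vanishing-viscosity regularization with a minimum-principle argument to propagate $\theta_0\ge 0$, an $H^s$ energy estimate splitting the nonlinearity into transport plus $\theta\Lambda\theta$, with the top-order term $\int\theta\,\phi\,\Lambda\phi$ neutralized by the C\'ordoba--C\'ordoba pointwise inequality together with $\theta\ge 0$, commutators handled by Kato--Ponce, and the same Gronwall/compactness conclusion, uniqueness estimate, and blow-up alternative. The only differences are cosmetic (you apply $\partial^\beta$ rather than $\Lambda^s$, and state the pointwise inequality as an identity with a discarded nonnegative remainder $D[\phi]$ instead of as an inequality).
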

\begin{rem} The assumption
that $\theta_0\ge 0$ in Theorem \ref{thm1.1} plays a key role in
obtaining local existence results on the smooth solution to the
system \eqref{1.1}. If we remove this assumption, in particular, we
assume that the initial data $\theta_0$ changes its sign in
$\mathbb{R}^N$, we can not get the local existence of the smooth
solution to the system \eqref{1.1} by using the method used in the
proof of Theorem \ref{thm1.1}. This will be discussed in the future.
\end{rem}
If $\nu>0$ and $0<\alpha\le 2$, then we can obtain the local
existence results on the strong solution to the system \eqref{1.1}.
\begin{theorem}\label{thm1.2}
Let $0<\alpha\le 2$ and $\nu>0$. Assume that $\theta_0\in
L^p(\mathbb{R}^N)$ with $p>1$. Then there exists a time $T>0$ such
that the system (\ref{1.1}) has a solution $\theta$, defined in $[0,
T]$, satisfying $\theta\in  L^q([0, T];L^p(\mathbb{R}^N))$ for any
$q>1$.

Further, assume that $\theta_0\in W^{l,p}(\mathbb{R}^N)$ with $l>1,
p>1$. Then there exists a time $T>0$ such that the system
(\ref{1.1}) has a solution $\theta$, defined in $[0, T]$, satisfying
$\partial_x^\beta\theta\in L^q([0, T];L^p(\mathbb{R}^N))$ for any
$q>1$ and $0\le |\beta|\le l$.
\end{theorem}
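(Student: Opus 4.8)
The plan is to prove Theorem~\ref{thm1.2} by a contraction/compactness argument built around the smoothing of the dissipative semigroup $e^{-\nu t \Lambda^\alpha}$, writing \eqref{1.1} in mild (Duhamel) form
\be
\theta(t)=e^{-\nu t\Lambda^\alpha}\theta_0-\int_0^t e^{-\nu(t-\tau)\Lambda^\alpha}\,\mathrm{div}\big(u(\tau)\theta(\tau)\big)\,d\tau,
\qquad u=\mathcal{R}\theta.\label{mild}
\ee
The key analytic input is the family of $L^p$--$L^q$ smoothing estimates for the fractional heat kernel: $\|e^{-\nu t\Lambda^\alpha}f\|_{L^q}\le C(\nu t)^{-\frac{N}{\alpha}(\frac1p-\frac1q)}\|f\|_{L^p}$, and more importantly the gain of a derivative, $\|\Lambda^\sigma e^{-\nu t\Lambda^\alpha}f\|_{L^q}\le C(\nu t)^{-\frac{\sigma}{\alpha}-\frac{N}{\alpha}(\frac1p-\frac1q)}\|f\|_{L^p}$. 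First I would fix the target space, namely the Banach space $X_T=L^q([0,T];L^p(\mathbb{R}^N))$ with an appropriate weight in $t$ (or, equivalently, work in a Kato-type space with a $t^{\gamma}$ weight chosen so that the norm is scaling-compatible), and set up the nonlinear map $\Phi(\theta)$ equal to the right side of \eqref{mild}. The boundedness of the Riesz transform $\mathcal{R}$ on $L^p$ for $1<p<\infty$ gives $\|u\|_{L^p}\le C\|\theta\|_{L^p}$, which is what lets the velocity be controlled by the unknown itself.

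The core of the argument is to show that $\Phi$ maps a ball of $X_T$ into itself and is a contraction for $T$ small. To estimate the Duhamel term I would put the full derivative on the kernel: since $\mathrm{div}(u\theta)=\Lambda\,\mathcal{H}(u\theta)$ in the sense that one spatial derivative can be absorbed into $\Lambda e^{-\nu(t-\tau)\Lambda^\alpha}$, the nonlinear term is bounded by $\int_0^t (\nu(t-\tau))^{-\frac1\alpha-\frac{N}{\alpha}(\frac2p-\frac1p)}\|u\theta\|_{L^{p/2}}\,d\tau$, and then $\|u\theta\|_{L^{p/2}}\le\|u\|_{L^p}\|\theta\|_{L^p}\le C\|\theta\|_{L^p}^2$ by H\"older and the Riesz bound. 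The remaining time integral is a singular (fractional) integral in $\tau$, and after taking the $L^q$ norm in $t$ and applying the Hardy--Littlewood--Sobolev inequality one obtains a bound of the form $C\,T^{\delta}\|\theta\|_{X_T}^2$ with a positive power $\delta=\delta(\alpha,N,p,q)>0$, provided the integrability exponent $-\frac1\alpha-\frac{N}{\alpha p}$ is strictly greater than $-1$ (this is where $\alpha>0$ and $p>1$ enter and must be checked to keep the kernel locally integrable). The linear term is handled directly, $\|e^{-\nu t\Lambda^\alpha}\theta_0\|_{X_T}\le C\|\theta_0\|_{L^p}$ after the $t$-integration, again yielding a finite $q$-norm for every $q>1$ on a finite interval. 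The difference estimate $\Phi(\theta_1)-\Phi(\theta_2)$ is identical in structure, using bilinearity and the triangle inequality, giving the Lipschitz constant $C\,T^\delta(\|\theta_1\|_{X_T}+\|\theta_2\|_{X_T})$; choosing the ball radius and then $T$ small makes this $<1$, so Banach's fixed point theorem furnishes a unique $\theta\in X_T$.

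For the second, higher-regularity assertion with $\theta_0\in W^{l,p}$, I would run the same scheme on the system satisfied by $\partial_x^\beta\theta$ for each multi-index $0\le|\beta|\le l$. Differentiating \eqref{1.1} and using the Leibniz rule produces source terms of the form $\sum_{\beta'\le\beta}\binom{\beta}{\beta'}\mathrm{div}\big(\partial_x^{\beta'}u\,\partial_x^{\beta-\beta'}\theta\big)$; the commutator/product terms are estimated by the same $L^{p/2}$ H\"older splitting together with the Gagliardo--Nirenberg interpolation inequality to redistribute derivatives, and the Riesz transform commutes with $\partial_x^\beta$, so $\partial_x^\beta u=\mathcal{R}\,\partial_x^\beta\theta$ is again $L^p$-bounded by $\partial_x^\beta\theta$. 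Closing the estimates on the cumulative norm $\sum_{|\beta|\le l}\|\partial_x^\beta\theta\|_{X_T}$ via the same contraction then yields $\partial_x^\beta\theta\in L^q([0,T];L^p)$ for all $q>1$ and $|\beta|\le l$. I expect the main obstacle to be the endpoint/integrability bookkeeping in the Duhamel estimate: one must verify that the temporal singularity $(t-\tau)^{-\frac1\alpha-\frac{N}{\alpha p}}$ stays integrable and that the HLS exponents close to give a genuinely positive power of $T$ uniformly for all $q>1$. This is delicate precisely in the regime where $\alpha$ is small (the super-critical direction) and $p$ is close to $1$, so the feasibility of the whole argument hinges on tracking those exponent constraints carefully; the nonlinearity itself is quadratic and hence routine once the linear smoothing estimates and the Riesz bound are in place.
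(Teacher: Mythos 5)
Your proposal follows essentially the same route as the paper's proof: both rewrite (\ref{1.1}) in mild (Duhamel) form through the fractional heat semigroup $G_\alpha(t)$ and run a Banach fixed-point argument in $L^q([0,T];L^p(\mathbb{R}^N))$ (and in $L^q([0,T];W^{l,p}(\mathbb{R}^N))$ for the higher-order claim), with the Calderon--Zygmund bound $\|u\|_{L^p}\le C\|\theta\|_{L^p}$ used to close the velocity term. The one difference is that you derive the bilinear Duhamel estimate explicitly (H\"older into $L^{p/2}$, semigroup smoothing, Hardy--Littlewood--Sobolev in time) and correctly flag that the temporal singularity $(t-\tau)^{-\frac{1}{\alpha}-\frac{N}{\alpha p}}$ may fail to be integrable when $\alpha$ is small or $p$ is near $1$, whereas the paper buries exactly this bookkeeping inside its cited Lemma \ref{lem2.1} (inequality (\ref{pq3})), which it states for all $p,q>1$ without the exponent restrictions your computation shows must be checked.
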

Secondly, for the sub-critical case $1<\alpha\le 2$ and $\nu>0$, we
have the following global existence and uniqueness results on strong
or smooth solution to the system \eqref{1.1}.
\begin{theorem}\label{thm1.3}
Let $1<\alpha\leq 2$ and $\nu> 0$ and suppose that $\theta_0\geq
0$.
\begin{description}\item[(i)]
 If $\theta_0\in
 H^2(\mathbb{R}^N)\cap L^p(\mathbb{R}^N)(p>\frac N{\alpha-1})$, then there exists a unique
 global solution $\theta$ to (\ref{1.1}) satisfying $\theta\in
 C([0,\infty);H^2(\mathbb{R}^N))$.
\item[(ii)]
 If $\theta_0\in
 H^s(\mathbb{R}^N)\cap L^p(\mathbb{R}^N)(s>0, p>\frac N{\alpha-1})$, then there exists a unique
 global solution $\theta$ to (\ref{1.1}) satisfying $\theta\in
 C([0,\infty);H^s(\mathbb{R}^N))$.
 \end{description}
\end{theorem}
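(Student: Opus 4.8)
The plan is to combine the local existence from Theorem \ref{thm1.1} (and Theorem \ref{thm1.2}) with global-in-time a priori bounds that defeat both alternatives of the continuation criterion. The starting point is to rewrite the flux: since $\mathrm{div}\,u=\mathrm{div}\,\mathcal R\theta=\Lambda\theta$ by \eqref{reiz1}, the equation in \eqref{1.1} becomes
\[
\partial_t\theta + u\cdot\nabla\theta + \theta\,\Lambda\theta + \nu\Lambda^\alpha\theta = 0 .
\]
Here the sign condition $\theta_0\ge 0$ is decisive, so I would first show nonnegativity is preserved: at a point where $\theta$ first attempts to cross zero one has $\nabla\theta=0$, $\theta=0$ and $\Lambda^\alpha\theta\le 0$ (by the singular-integral form \eqref{reiz}), whence $\partial_t\theta=-\nu\Lambda^\alpha\theta\ge 0$ there, giving $\theta(\cdot,t)\ge 0$ for as long as the solution exists.

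Next I would prove that every $L^q$ norm is non-increasing. Testing the reformulated equation against $\theta^{q-1}$ ($q>1$, $\theta\ge 0$), the dissipation contributes $\nu\int\theta^{q-1}\Lambda^\alpha\theta\ge 0$ by the C\'ordoba--C\'ordoba inequality, while the transport and compressible terms combine, after integrating the transport part by parts, into $\tfrac{1-q}{q}\int\theta^{q}\Lambda\theta$. Using \eqref{reiz1} and symmetrizing, $\int\theta^{q}\Lambda\theta=\tfrac{C}{2}\iint\frac{(\theta(x)^q-\theta(y)^q)(\theta(x)-\theta(y))}{|x-y|^{N+1}}\,dx\,dy\ge 0$ because $\theta\ge 0$; since $q>1$ this term is $\le 0$. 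Hence $\tfrac{d}{dt}\|\theta\|_{L^q}^q\le 0$ for all $1<q\le\infty$, and in particular $\|\theta(t)\|_{L^p}\le\|\theta_0\|_{L^p}$. This is exactly the quantity I need: $p=N/(\alpha-1)$ is the scaling-critical Lebesgue exponent for \eqref{1.1} (the equation is invariant under $\theta\mapsto\lambda^{\alpha-1}\theta(\lambda x,\lambda^\alpha t)$), so the hypothesis $p>N/(\alpha-1)$ furnishes a uniformly controlled, scaling-subcritical norm.

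With this in hand I would run the $\dot H^s$ energy estimate. Applying $\Lambda^s$, pairing with $\Lambda^s\theta$ and using the dissipation gives $\tfrac12\tfrac{d}{dt}\|\Lambda^s\theta\|_{L^2}^2+\nu\|\Lambda^{s+\alpha/2}\theta\|_{L^2}^2=-\int\Lambda^s\theta\,\Lambda^s\mathrm{div}(u\theta)$. The right side is estimated through Kato--Ponce commutator and fractional Leibniz bounds, together with $\|\Lambda^s u\|_{L^2}=\|\Lambda^s\mathcal R\theta\|_{L^2}\le C\|\Lambda^s\theta\|_{L^2}$. A scaling count shows the cubic right-hand side is bounded, at the critical exponent, by a constant times $\|\theta\|_{L^p}\|\Lambda^{s+\alpha/2}\theta\|_{L^2}^2$; since $p>N/(\alpha-1)$ there is a margin, so by Gagliardo--Nirenberg interpolation and Young's inequality it may be split as $\tfrac{\nu}{2}\|\Lambda^{s+\alpha/2}\theta\|_{L^2}^2+C(\|\theta\|_{L^p})\|\Lambda^s\theta\|_{L^2}^2$. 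Absorbing the first term into the dissipation and invoking Gronwall with the a priori bound on $\|\theta\|_{L^p}$ yields $\sup_{[0,T]}\|\theta\|_{H^s}<\infty$ on every finite interval. For part (i) this is the case $s=2$; for part (ii) the same computation is valid for all $s>0$ (for small $s$ one may alternatively use the instantaneous parabolic smoothing of the subcritical equation to reduce to the $H^2$ case). The global $H^s$ bound contradicts both alternatives of the continuation criterion in Theorem \ref{thm1.1}, so $T^*=\infty$, and uniqueness is inherited from the local theory.

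The main obstacle is the nonlinear $\dot H^s$ estimate, specifically the compressible term $\theta\,\Lambda\theta$ at top order: the dissipation supplies only $\alpha/2\le 1$ derivatives, leaving a gap of $1-\tfrac{\alpha}{2}$ against the derivative demanded by $\Lambda\theta$, so the estimate cannot be closed by brute force. It is precisely the condition $\alpha>1$, together with the scaling-subcritical control of $\|\theta\|_{L^p}$, that lets the borderline term be absorbed rather than merely bounded. Getting this balance right — choosing the commutator decomposition and the interpolation exponents so that every factor closes against $\|\theta\|_{L^p}$ and $\|\Lambda^{s+\alpha/2}\theta\|_{L^2}$ — is the crux of the argument.
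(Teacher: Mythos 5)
Your proposal is correct and follows essentially the same route as the paper: the sign condition gives the preserved nonnegativity and the non-increasing $L^p$ norms via the C\'ordoba--C\'ordoba positivity inequality (the paper's Step 1), and the $H^s$ energy estimate is then closed exactly as in the paper's Steps 2--3, by bounding the cubic term with the fractional Leibniz rule and Calderon--Zygmund estimates against $\|\theta\|_{L^p}$, interpolating with the margin afforded by $p>\frac{N}{\alpha-1}$ (the paper's exponent $a=\frac{2-\alpha+2\delta}{\alpha}<1$ with $\delta=N/p$), absorbing into the dissipation by Young, and applying Gronwall plus the continuation/extension argument. The only cosmetic differences are that you justify $\int\theta^{q}\Lambda\theta\ge 0$ by symmetrization rather than by citing Lemma \ref{le003}, and you phrase the nonlinear estimate via a commutator decomposition where the paper moves $\Lambda^{\alpha/2}$ onto the nonlinearity and uses the product estimate directly.
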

For the critical case  $\alpha=1$, we have the following regularity
result.
\begin{theorem}\label{thm1.4}
Let $\theta(x, t)$ be a solution to system (\ref{1.1}). Then
$\theta$ verifies the level set energy inequalities, i.e., for every
$\lambda>0$
\begin{eqnarray}
% \nonumber to remove numbering (before each equation)
 &&\int_{\mathbb{R}^N}\theta_\lambda^2 (t_2, x)dx
 +2\int_{t_1}^{t_2}\int_{\mathbb{R}^N}|\Lambda^\frac{1}{2}\theta_\lambda|^2dxdt
 \leq\int_{\mathbb{R}^N} \theta_\lambda^2(t_1,x)dx,\quad
 0<t_1<t_2,\label{mm0}
\end{eqnarray} where $\theta_\lambda=(\theta-\lambda)_+$. It yields
that for every $t_0$ there exists $\gamma>0$ such that $\theta$ is
bounded in $C^\gamma([t_0,\infty)\times \mathbb{R}^N)$.
\end{theorem}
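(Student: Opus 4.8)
The plan is to prove the statement in two stages: first establish the level set energy inequality \eqref{mm0}, and then feed it into a De Giorgi--type iteration in the spirit of Caffarelli and Vasseur to extract H\"older continuity. For the first stage I would test the equation (with $\alpha=1$) against $\theta_\lambda=(\theta-\lambda)_+$. Writing $\operatorname{div}(u\theta)=u\cdot\nabla\theta+\theta\,\operatorname{div}u$ and using $\operatorname{div}u=\operatorname{div}\mathcal R\theta=\Lambda\theta$ from \eqref{reiz1}, the time term gives $\frac12\frac{d}{dt}\int_{\mathbb R^N}\theta_\lambda^2\,dx$ (since $\partial_t\theta_\lambda=\partial_t\theta\,\mathbf{1}_{\{\theta>\lambda\}}$), while integrating the transport term by parts and using $\nabla\theta=\nabla\theta_\lambda$ and $\theta=\theta_\lambda+\lambda$ on $\{\theta>\lambda\}$ leaves, after collecting terms, the identity
\[
\frac12\frac{d}{dt}\int_{\mathbb R^N}\theta_\lambda^2\,dx+\tfrac12\int_{\mathbb R^N}\Lambda\theta\,\theta_\lambda^2\,dx+(\lambda+\nu)\int_{\mathbb R^N}\Lambda\theta\,\theta_\lambda\,dx=0.
\]

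The heart of stage one is showing that the two remaining integrals are favourable. Using the singular--integral representation \eqref{reiz} and symmetrizing in $x,y$, each becomes a double integral with kernel $|x-y|^{-N-1}$ against $(\Phi(\theta(x))-\Phi(\theta(y)))(\theta(x)-\theta(y))$ with $\Phi(s)=(s-\lambda)_+^2$ and $\Phi(s)=(s-\lambda)_+$ respectively; since both $\Phi$ are nondecreasing these integrands are pointwise nonnegative, so $\int\Lambda\theta\,\theta_\lambda^2\ge0$, and moreover the C\'ordoba--C\'ordoba type bound $\int\Lambda\theta\,\theta_\lambda\ge\int|\Lambda^{1/2}\theta_\lambda|^2$ holds because $(\theta_\lambda(x)-\theta_\lambda(y))(\theta(x)-\theta(y))\ge(\theta_\lambda(x)-\theta_\lambda(y))^2$ pointwise. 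Here the hypothesis $\theta\ge0$ (propagated by the earlier theorems) is exactly what gives the compressible drift terms the correct sign. Dropping the nonnegative term $\tfrac12\int\Lambda\theta\,\theta_\lambda^2$ and using $\lambda+\nu\ge\nu=1$ after the standard normalization $\nu=1$, integration in $t$ over $[t_1,t_2]$ yields \eqref{mm0}.

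For the second stage I would run the two De Giorgi lemmas on parabolic cylinders. The first ($L^2\to L^\infty$) step uses \eqref{mm0} together with the Sobolev embedding $\dot H^{1/2}(\mathbb R^N)\hookrightarrow L^{2N/(N-1)}$ and interpolation to set up, for truncations at levels $\lambda_k=M(1-2^{-k})$, a nonlinear recursion $A_k\le C^k A_{k-1}^{1+\beta}$ for the associated energy quantities $A_k$; its convergence to $0$ for small data gives a local $L^\infty$ bound, so after rescaling $\theta$ is bounded for $t\ge t_0>0$. The H\"older estimate then follows from the oscillation--reduction (second De Giorgi) lemma: once $\theta$ is trapped in an interval and occupies a definite fraction of a cylinder below the midlevel, an intermediate--value/isoperimetric measure estimate --- controlled again by the $\dot H^{1/2}$ dissipation in \eqref{mm0}, localized through the Caffarelli--Silvestre extension to the half--space --- forces the oscillation to contract by a fixed factor $(1-\eta)$ on the next dyadic cylinder. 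Iterating this contraction on $[t_0,\infty)\times\mathbb R^N$, with constants uniform in time because the energy is nonincreasing, produces $\theta\in C^\gamma$ for some $\gamma>0$.

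The main obstacle I anticipate is not the bookkeeping of the recursion but the \emph{localized} versions of the energy inequality needed for the De Giorgi lemmas: testing against $\theta_\lambda\varphi^2$ with a spatial cutoff $\varphi$ produces commutator remainders from the nonlocal operator $\Lambda$ and, crucially, extra terms carrying the compressible factor $\operatorname{div}u=\Lambda\theta$, which is merely $BMO$ once $\theta\in L^\infty$. The key point to verify is that, exactly as in stage one, the sign structure coming from $\theta\ge0$ and the monotone truncations keeps these compressible contributions nonnegative or absorbable into the dissipation, so that the nonlocal drift $u=\mathcal R\theta$ --- only $BMO$ and not pointwise bounded --- can be handled through the extension technique rather than by bounds on $u$ itself.
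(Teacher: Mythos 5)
Your proposal follows essentially the same route as the paper's proof: both test the equation with $\theta_\lambda$, use $\operatorname{div}u=\Lambda\theta$ together with symmetrization of the kernel and monotonicity of the truncation to show the compressible contribution is nonnegative (the paper's \eqref{m2} is exactly your term $\tfrac12\int\Lambda\theta\,\theta_\lambda^2\,dx\ge 0$), bound the dissipation below by $\int|\Lambda^{1/2}\theta_\lambda|^2dx$ via the pointwise C\'ordoba--C\'ordoba/Caffarelli--Vasseur inequality, and then invoke the Caffarelli--Vasseur De Giorgi machinery (with the Caffarelli--Silvestre extension) for the H\"older bound. Your slightly different bookkeeping (keeping the extra nonnegative term $\lambda\int\Lambda\theta\,\theta_\lambda\,dx$ explicitly) and your more detailed sketch of the second stage only flesh out what the paper compresses into its two ``key points'' and the citation of \cite{caffarelli,19}.
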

For the super-critical case $0<\alpha<1$, we have the following
global existence results on weak solutions. A similar result holds
for the case $1\le \alpha\le 2$.
\begin{theorem}\label{thm1.5}
Let $T>0$ be arbitrary.  For every  $\theta_0\in L^2(\mathbb{R}^N),
\theta_0\geq 0$ and $0<\alpha\leq 2$, then there exists at least one
weak solution of the system (\ref{1.1}), satisfying
\begin{equation}\label{007}
  \theta\in L^\infty([0,T]; L^2(\mathbb{R} ^N))\cap L^2([0, T];H^{\frac{\alpha}{2}}(\mathbb{R}^N)).
\end{equation}\end{theorem}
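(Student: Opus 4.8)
The plan is to obtain the weak solution as the limit of a family of smooth approximate solutions, following the standard three-step scheme of \emph{regularization}, \emph{uniform a priori estimates}, and \emph{compactness}. First I would introduce the vanishing-viscosity regularization
\[
\partial_t\theta^\e+\mathrm{div}(u^\e\theta^\e)+\nu\Lambda^\alpha\theta^\e=\e\Delta\theta^\e,\qquad u^\e=\mathcal{R}\theta^\e,\qquad \theta^\e(\cdot,0)=J_\e*\theta_0,
\]
where $J_\e$ is a standard mollifier, so that the data are smooth and still nonnegative. For each fixed $\e>0$ this is a semilinear parabolic system with the strong dissipation $\e\Delta=\e\Lambda^2$, so local-in-time smooth solutions exist by standard theory; moreover the nonnegativity $\theta^\e\ge0$ is preserved, since at a first would-be zero $x_0$ of $\theta^\e$ one has $\nabla\theta^\e(x_0)=0$, $\Delta\theta^\e(x_0)\ge0$ and $\Lambda^\alpha\theta^\e(x_0)=C_\alpha P.V.\int\frac{0-\theta^\e(y)}{|x_0-y|^{N+\alpha}}\,dy\le0$, whence $\partial_t\theta^\e(x_0)\ge0$.

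The heart of the argument is the energy estimate, which I would derive by multiplying the regularized equation by $\theta^\e$ and integrating. Writing $\mathrm{div}\,u^\e=\mathrm{div}\,\mathcal{R}\theta^\e=\Lambda\theta^\e$ as in \eqref{reiz1} and integrating by parts gives
\[
\int_{\mathbb{R}^N}\mathrm{div}(u^\e\theta^\e)\,\theta^\e\,dx=\tfrac12\int_{\mathbb{R}^N}(\mathrm{div}\,u^\e)(\theta^\e)^2\,dx=\tfrac12\int_{\mathbb{R}^N}(\theta^\e)^2\Lambda\theta^\e\,dx.
\]
The crucial point --- and the place where the hypothesis $\theta_0\ge0$ is used --- is that this cubic term has the \emph{favorable} sign. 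Indeed, using the singular-integral formula \eqref{reiz1} and symmetrizing in $x,y$,
\[
\int_{\mathbb{R}^N}(\theta^\e)^2\Lambda\theta^\e\,dx=\frac{C(N)}{2}\iint\frac{\big(\theta^\e(x)+\theta^\e(y)\big)\big(\theta^\e(x)-\theta^\e(y)\big)^2}{|x-y|^{N+1}}\,dx\,dy\ge0,
\]
since $\theta^\e\ge0$. Combined with the nonnegative contributions $\nu\|\Lambda^{\alpha/2}\theta^\e\|_{L^2}^2$ and $\e\|\nabla\theta^\e\|_{L^2}^2$, this yields
\[
\|\theta^\e(t)\|_{L^2}^2+2\nu\int_0^t\|\Lambda^{\alpha/2}\theta^\e\|_{L^2}^2\,ds\le\|\theta_0\|_{L^2}^2,
\]
giving bounds for $\theta^\e$ in $L^\infty([0,T];L^2)\cap L^2([0,T];H^{\alpha/2})$ uniform in $\e$; in particular the approximate solutions are global.

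With these uniform bounds in hand I would extract a weakly convergent subsequence and upgrade the convergence to a strong one via the Aubin--Lions lemma. Since $\mathcal{R}$ is bounded on $L^2$, the flux $u^\e\theta^\e=(\mathcal{R}\theta^\e)\theta^\e$ is bounded in $L^\infty([0,T];L^1)$, and reading $\partial_t\theta^\e=-\mathrm{div}(u^\e\theta^\e)-\nu\Lambda^\alpha\theta^\e+\e\Delta\theta^\e$ off the equation bounds $\partial_t\theta^\e$ uniformly in $L^2([0,T];H^{-m})$ for $m$ large enough. Because the embedding $H^{\alpha/2}\hookrightarrow L^2$ is compact on bounded domains, this gives strong convergence $\theta^\e\to\theta$ in $L^2_{loc}$, which is exactly what is needed to pass to the limit in the quadratic term $(\mathcal{R}\theta^\e)\theta^\e\to(\mathcal{R}\theta)\theta$; the linear terms pass by weak convergence and $\e\Delta\theta^\e\to0$ in the sense of distributions. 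The limit $\theta$ inherits $\theta\ge0$ and the regularity \eqref{007} from the uniform bounds by lower semicontinuity.

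The main obstacle is precisely the cubic term $\tfrac12\int(\theta^\e)^2\Lambda\theta^\e\,dx$: unlike the incompressible QG equation, here $\mathrm{div}\,u=\Lambda\theta\ne0$, so this term does not vanish and there is no a priori reason for it to have a useful sign. The symmetrization above shows it is in fact nonnegative exactly because $\theta\ge0$, and this is what closes the energy estimate; without the sign condition the term could not be controlled, consistent with the Remark following Theorem \ref{thm1.1}. A secondary technical point is to verify that the regularization preserves both nonnegativity and the closed energy inequality; the vanishing-viscosity scheme above does so because $\e\Delta$ contributes a good sign and leaves the identity $\mathrm{div}\,u^\e=\Lambda\theta^\e$ undisturbed.
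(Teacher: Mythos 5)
Your proposal is correct and follows essentially the same route as the paper: vanishing-viscosity regularization with mollified nonnegative data, the closed $L^2$ energy estimate in which the cubic term $\tfrac12\int(\theta^\varepsilon)^2\Lambda\theta^\varepsilon\,dx$ has a favorable sign precisely because $\theta^\varepsilon\ge 0$, and Aubin--Lions compactness to pass to the limit in the quadratic flux. The only cosmetic difference is that you justify the sign of the cubic term by direct symmetrization of the singular integral, whereas the paper invokes its positivity Lemma \ref{le003}; the two arguments are equivalent.
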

Because the weak solution in Theorem \ref{thm1.5} is not unique, we
try to give a unique criterion on weak solution. We have the
following regularity result for $1<\alpha\le 2$.
\begin{theorem}\label{thm1.6}
Let $T>0$ be arbitrary.  For every  $\theta_0\in L^2(\mathbb{R}^N),
\theta_0\geq 0$ and $1<\alpha\leq 2$, there exists a unique solution
of (\ref{1.1}) such that $\theta\in L^\infty(0, T;
L^2(\mathbb{R}^N))\cap
L^2(0,T;H^{\frac{\alpha}{2}}(\mathbb{R}^N))\cap L^p(0,
T;L^q(\mathbb{R}^N))$ for $q>\frac{N}{\alpha-1}$ and
$\frac{1}{p}+\frac{N}{q\alpha}=1-\frac{1}{\alpha}$.
\end{theorem}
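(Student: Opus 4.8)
The plan is to prove uniqueness by a weak--strong (Serrin--type) energy argument, the identity $\frac{\alpha}{p}+\frac{N}{q}=\alpha-1$ being exactly the scaling under which \eqref{1.1} is invariant, and to obtain existence in the stated class by refining the construction behind Theorem \ref{thm1.5}. Suppose $\theta_1,\theta_2$ are two solutions in the class of Theorem \ref{thm1.6} with the same data, and set $w=\theta_1-\theta_2$, $u_i=\mathcal{R}\theta_i$. Using $u_1\theta_1-u_2\theta_2=u_1w+\theta_2\mathcal{R}w$ and the relation $\mathrm{div}\,u_i=\Lambda\theta_i$ from \eqref{reiz1}, the difference solves
\[
\partial_t w+\mathrm{div}(u_1 w)+\mathrm{div}(\theta_2\mathcal{R}w)+\nu\Lambda^\alpha w=0,\qquad w|_{t=0}=0 .
\]
Pairing with $w$ (after a mollification, justified because $w$ only has $\dot H^{\alpha/2}$ regularity), I would obtain the energy identity
\[
\tfrac12\tfrac{d}{dt}\|w\|_{L^2}^2+\nu\|\Lambda^{\alpha/2}w\|_{L^2}^2=\int_{\mathbb{R}^N}G\cdot\nabla w\,dx=:J,\qquad G:=u_1 w+\theta_2\mathcal{R}w .
\]
Keeping the compressible piece $u_1 w$ inside $G$ (rather than integrating it by parts into $\frac12\int(\Lambda\theta_1)w^2$) is what lets me treat both flux terms on the same footing, since $u_1=\mathcal{R}\theta_1\in L^q$ by $L^q$-boundedness of the Riesz transforms.

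The obstruction is that $J$ contains the full gradient $\nabla w$, whereas the dissipation only controls $\Lambda^{\alpha/2}w$ with $\alpha/2\le 1$. I would transfer half a derivative onto the dissipation by writing $\nabla w=\Lambda^{1-\alpha/2}\mathcal{R}\,\Lambda^{\alpha/2}w$ (with $\mathcal R=\nabla\Lambda^{-1}$, easily checked on the Fourier side) and moving the operators by (anti)self-adjointness, so that
\[
J=-\int_{\mathbb{R}^N}\bigl(\mathcal{R}\,\Lambda^{1-\alpha/2}G\bigr)\cdot\Lambda^{\alpha/2}w\,dx,\qquad |J|\leq C\,\|\Lambda^{1-\alpha/2}G\|_{L^2}\,\|\Lambda^{\alpha/2}w\|_{L^2}.
\]
The second factor will be absorbed into the dissipation by Young's inequality. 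Since $1<\alpha\le 2$ gives $s:=1-\frac{\alpha}{2}\in[0,\tfrac12)$, I can estimate $\|\Lambda^{s}G\|_{L^2}$ by the fractional Leibniz (Kato--Ponce) rule together with the $L^r$-boundedness of $\mathcal R$. For the term in which the fractional derivative lands on the $w$-factor, I would use Sobolev's embedding $\dot H^{\alpha/2}\hookrightarrow L^{2N/(N-\alpha)}$ and Gagliardo--Nirenberg, $\|\Lambda^{s}w\|_{L^r}\leq C\|w\|_{L^2}^{1-\phi}\|\Lambda^{\alpha/2}w\|_{L^2}^{\phi}$, with Hölder exponents tied to $\theta\in L^q$ by $\frac1q+\frac1r=\frac12$.

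The main obstacle is the Kato--Ponce term in which the derivative instead lands on $\theta_i$, which lies only in $L^q$ and is not differentiable there: this is genuinely new compared with incompressible quasi-geostrophic uniqueness, where $\mathrm{div}\,u=0$ removes the analogous contribution. Here I would exploit the extra energy regularity $\theta_i\in L^\infty_t L^2_x\cap L^2_t\dot H^{\alpha/2}_x$ from Theorem \ref{thm1.5}: because $\alpha>1$ forces $s=1-\frac{\alpha}{2}<\frac{\alpha}{2}$, the quantity $\Lambda^{s}\theta_i$ is of strictly lower order than the controlled $\Lambda^{\alpha/2}\theta_i$, so it can be placed in the required $L^{p_1}$ by interpolating $\theta_i\in L^2$ with $\Lambda^{\alpha/2}\theta_i\in L^2$, while the companion factor $\mathcal{R}w$ is handled by the same Sobolev embedding. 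Tracking the exponents, the $w$-dominant piece produces $|J|\leq C\|\theta\|_{L^q}\|w\|_{L^2}^{1-\phi}\|\Lambda^{\alpha/2}w\|_{L^2}^{1+\phi}$, and one checks from $\frac{\alpha}{p}+\frac Nq=\alpha-1$ that $1-\phi=\frac{2}{p}$, so Young's inequality gives exactly
\[
|J|\leq \tfrac{\nu}{2}\|\Lambda^{\alpha/2}w\|_{L^2}^2+C\,\|\theta(t)\|_{L^q}^{\,p}\,\|w\|_{L^2}^2 ,
\]
with $\|\theta(\cdot)\|_{L^q}^{p}\in L^1(0,T)$ precisely because $\theta\in L^p_tL^q_x$; the $\theta$-dominant piece is bounded in the same spirit using the $L^2_t\dot H^{\alpha/2}_x$ norm, and the Serrin scaling again makes the time weight integrable.

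Absorbing $\frac{\nu}{2}\|\Lambda^{\alpha/2}w\|_{L^2}^2$ on the left and applying Gronwall's inequality with $w|_{t=0}=0$ forces $w\equiv 0$, which is uniqueness. For existence of a solution in this class I would refine the approximation scheme underlying Theorem \ref{thm1.5} (Galerkin or vanishing-regularization), carrying along the additional a priori $L^p_tL^q_x$ bound afforded by the subcritical dissipation $1<\alpha\le2$, and pass to the limit; the global smooth solution produced in Theorem \ref{thm1.3} under stronger data hypotheses gives a consistency check that such a solution lies in the prescribed class. The conceptual heart of the proof, and the step I expect to be hardest, is the control of the fractional derivative falling on $\theta_i$ in the compressible flux, since it is exactly the non-solenoidal structure $\mathrm{div}\,u_i=\Lambda\theta_i\neq0$ that obstructs the classical argument and requires combining the two a priori norms ($L^pL^q$ and $L^2\dot H^{\alpha/2}$) with the precise Serrin balance of exponents.
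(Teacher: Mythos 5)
Your $L^2$-energy setup is correctly assembled (the decomposition $u_1\theta_1-u_2\theta_2=u_1w+\theta_2\mathcal{R}w$, the identity $J=-\int(\mathcal{R}\Lambda^{1-\alpha/2}G)\cdot\Lambda^{\alpha/2}w\,dx$, and the bookkeeping $1-\phi=2/p$ for the piece where the derivative lands on $w$ are all right), but the step you yourself flag as hardest --- the Kato--Ponce piece where $\Lambda^{1-\alpha/2}$ lands on $\theta_2$ (or on $u_1$) --- does not go through, and no choice of exponents can repair it. In the uniqueness class, $\theta_2$ carries no fractional derivative in $L^q$; the only derivative information available is $\Lambda^{\alpha/2}\theta_2\in L^2_tL^2_x$. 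Any scale-sharp bound (H\"older, Sobolev, Gagliardo--Nirenberg) of the trilinear term $\int\theta_2\,\mathcal{R}w\cdot\nabla w\,dx$ that spends a positive power $\sigma$ of $\|\Lambda^{\alpha/2}\theta_2\|_{L^2}$ must, by dimensional balance, have the form
\[
\|\theta_2\|_{L^q}^{1-\sigma}\,\|\Lambda^{\alpha/2}\theta_2\|_{L^2}^{\sigma}\,\|w\|_{L^2}^{a}\,\|\Lambda^{\alpha/2}w\|_{L^2}^{b},
\qquad a+b=2,\qquad
b=\tfrac{2}{\alpha}\bigl[1+(1-\sigma)\tfrac{N}{q}+\sigma\tfrac{N}{2}\bigr]-\sigma .
\]
After Young, the Gronwall weight is $\bigl(\|\theta_2\|_{L^q}^{1-\sigma}\|\Lambda^{\alpha/2}\theta_2\|_{L^2}^{\sigma}\bigr)^{2/(2-b)}$, and H\"older in time (using $\|\theta_2\|_{L^q}\in L^p_t$, $\|\Lambda^{\alpha/2}\theta_2\|_{L^2}\in L^2_t$) demands $\frac{2(1-\sigma)}{p}+\sigma\le 2-b$; inserting the Serrin relation $\frac1p=1-\frac1\alpha-\frac{N}{q\alpha}$, this reduces, for every $\sigma>0$, to exactly $N\le 2\alpha-2$. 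Thus for $N\ge 3$ (and for $N=2$ unless $\alpha=2$) the time weight is never integrable: your claim that ``the Serrin scaling again makes the time weight integrable'' is false precisely for this term. The failure is structural, not technical: the energy norms of $\theta_i$ are supercritical with respect to the natural scaling unless $N\le 2\alpha-2$, so they cannot substitute for missing spatial regularity of $\theta_i$ in a scaling-critical estimate.

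The paper avoids the problem entirely by changing the energy level rather than redistributing derivatives: it pairs the difference equation with $\Lambda^{-1}\theta$ instead of $\theta$, i.e.\ runs the estimate in $\dot H^{-1/2}$. The flux term then reads $\int(u_1\theta+u\theta_2)\cdot\nabla\Lambda^{-1}\theta\,dx$, and since $\nabla\Lambda^{-1}=\mathcal{R}$ is an operator of order zero, bounded on $L^p$, plain H\"older gives $C(\|\theta_1\|_{L^q}+\|\theta_2\|_{L^q})\|\theta\|_{L^p}^2$ with $\frac1q+\frac2p=1$ --- no derivative ever lands on $\theta_1$, $\theta_2$, or $\theta$. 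Sobolev ($\|\theta\|_{L^p}\le C\|\Lambda^{\frac{N}{2q}+\frac12}\Lambda^{-\frac12}\theta\|_{L^2}$), Gagliardo--Nirenberg between $\|\Lambda^{-\frac12}\theta\|_{L^2}$ and $\|\Lambda^{\frac\alpha2}\Lambda^{-\frac12}\theta\|_{L^2}$, and Young then yield the Gronwall weight $(\|\theta_1\|_{L^q}+\|\theta_2\|_{L^q})^{1/r}$ with $1/r=p$, which is integrable by hypothesis in every dimension. If you want to salvage your argument, replace the $L^2$ pairing by this $\Lambda^{-1}\theta$ pairing; your exponent bookkeeping for the ``good'' term then carries over essentially unchanged, and the ``bad'' term never appears. (Note also that the existence part of the statement is only asserted in your last paragraph; the paper's own proof likewise addresses only uniqueness, so the real content at stake is the energy argument above.)
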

For the sub-critical case $\alpha\in(1,2]$, we have the following
decay rate for the global solution.
\begin{theorem}\label{thm1.7}
Let $\alpha\in(1,2]$ and $N>2$. Assume that $\theta_0\geq 0,
\theta_0\in L^2(\mathbb{R}^N)\cap L^1(\mathbb{R}^N)$ and
$\Lambda\theta_0\in L^2(\mathbb{R}^N)$.  Then the solution
$\theta(x,t)$ to the problem (\ref{1.1}) have the following decay
rate in time:
\begin{description}\item[(i)]\quad
\begin{equation}\label{5.5}
 \|\theta(t)\|_{L^2}\leq C(1+t)^{-\frac
 12(\frac{N+2-2\alpha}{\alpha}-\epsilon)};
\end{equation}
\item[(ii)]\quad
\begin{equation}\label{lp1}
\|\theta(t)\|_{L^p}\le C(1+t)^{-\frac{N(p-2)}{2p\alpha}}, p>2;
\end{equation}
\item[(iii)]
 \begin{equation}\label{5.6}
   \|\nabla\theta(t)\|_{L^2}\leq
   C(1+t)^{-\frac 12(\frac{N+2-2\alpha}{\alpha}-\epsilon)}.
\end{equation}
\end{description} Here $C$ is a positive constant and $\epsilon$ is sufficiently small positive
constant.
\end{theorem}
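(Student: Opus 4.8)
The plan is to run the Fourier splitting method of Schonbek on each quantity, the key structural input being that for nonnegative solutions the compressible nonlinearity acts as an additional dissipation. First I would rewrite the flux as $\mathrm{div}(u\theta)=u\cdot\nabla\theta+\theta\,\Lambda\theta$, using $\mathrm{div}\,\mathcal R\theta=\Lambda\theta$ from \eqref{reiz1}. Testing \eqref{1.1} against $\theta^{p-1}$ and integrating by parts then gives, for every $p\ge 2$,
\[
\frac1p\frac{d}{dt}\|\theta\|_{L^p}^p+\frac{p-1}{p}\int_{\mathbb R^N}\theta^{p}\Lambda\theta\,dx+\nu\int_{\mathbb R^N}\theta^{p-1}\Lambda^\alpha\theta\,dx=0 .
\]
Since $\theta_0\ge0$ is propagated by the flow, the C\'ordoba--C\'ordoba pointwise inequality $\phi'(\theta)\Lambda^\beta\theta\ge\Lambda^\beta(\phi(\theta))$ (applied with convex $\phi$) shows both integrals are nonnegative, because integrating $\Lambda^\beta$ of a decaying function gives zero; I may therefore drop the nonlinear term and keep the clean inequality $\frac{d}{dt}\|\theta\|_{L^2}^2+2\nu\|\Lambda^{\alpha/2}\theta\|_{L^2}^2\le0$, together with the conservation $\|\theta(t)\|_{L^1}=\|\theta_0\|_{L^1}$ (both $\int\mathrm{div}(u\theta)$ and $\int\Lambda^\alpha\theta$ vanish). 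These two facts are what make the flow behave like the linear fractional heat equation.

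For (i) I would apply Fourier splitting: from $\|\Lambda^{\alpha/2}\theta\|_{L^2}^2\ge R(t)^\alpha\big(\|\theta\|_{L^2}^2-\int_{|\xi|<R}|\hat\theta|^2d\xi\big)$ and the choice $R(t)^\alpha=\frac{m}{2\nu(1+t)}$ one gets
\[
\frac{d}{dt}\big[(1+t)^m\|\theta\|_{L^2}^2\big]\le m(1+t)^{m-1}\int_{|\xi|<R(t)}|\hat\theta(\xi,t)|^2\,d\xi .
\]
The low-frequency integral is controlled through the Duhamel formula $\hat\theta=e^{-\nu|\xi|^\alpha t}\hat\theta_0-\int_0^t e^{-\nu|\xi|^\alpha(t-s)}i\xi\cdot\widehat{u\theta}\,ds$: the linear part obeys $|\hat\theta_0|\le\|\theta_0\|_{L^1}$, while $|\widehat{u\theta}|\le\|u\theta\|_{L^1}\le C\|\theta\|_{L^2}^2$ yields $|\hat\theta(\xi,t)|\le C\big(\|\theta_0\|_{L^1}+|\xi|\int_0^t\|\theta\|_{L^2}^2ds\big)$. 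Inserting this, the retained ball contributes a linear piece $\|\theta_0\|_{L^1}^2R^N$ and a nonlinear piece $\sim R^{N+2}\big(\int_0^t\|\theta\|_{L^2}^2\big)^2$; with $R^\alpha\sim(1+t)^{-1}$ the nonlinear piece is the slower-decaying one, and estimating its time factor by the energy bound $\|\theta\|_{L^2}\le\|\theta_0\|_{L^2}$ produces the stated exponent $\frac12\big(\frac{N+2-2\alpha}{\alpha}-\epsilon\big)$, the $\epsilon$ absorbing the borderline growth of $\int_0^t\|\theta\|_{L^2}^2ds$ in the integrating-factor step. The hypothesis $N>2$ is precisely what keeps this exponent positive.

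For (iii) I would apply $\Lambda$ to \eqref{1.1} — so that $\|\nabla\theta\|_{L^2}=\|\Lambda\theta\|_{L^2}$ and $\Lambda\theta_0\in L^2$ is the natural initial datum — and test against $\Lambda\theta$. The dissipation again gives $\nu\|\Lambda^{\alpha/2}\Lambda\theta\|_{L^2}^2$, while the commutator-type term $\int\Lambda\theta\,\Lambda\,\mathrm{div}(u\theta)$ is split into a part absorbed by the dissipation and a lower-order part handled by Gagliardo--Nirenberg interpolation together with the $L^2$ decay from (i). Running Fourier splitting on $\|\Lambda\theta\|_{L^2}^2$ with the same radius, the retained ball is once more dominated by the nonlinear Duhamel contribution, limited by the same mechanism as in (i); this is why the gradient decays at the same rate rather than faster. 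Part (ii) I would obtain from the $L^p$ identity above with $v=\theta^{p/2}$: dropping the nonnegative nonlinear term and using the lower bound $\int\theta^{p-1}\Lambda^\alpha\theta\ge c\|\Lambda^{\alpha/2}v\|_{L^2}^2$ gives $\frac{d}{dt}\|v\|_{L^2}^2+c\|\Lambda^{\alpha/2}v\|_{L^2}^2\le0$, to which Fourier splitting applies with the low-frequency ball bounded through $\|v\|_{L^1}=\|\theta\|_{L^{p/2}}^{p/2}$ and interpolation against the conserved $L^1$ norm and the $L^2$ decay; converting the resulting decay of $\|\theta\|_{L^p}^p$ back to $\|\theta\|_{L^p}$ gives the exponent $\frac{N(p-2)}{2p\alpha}$.

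The main obstacle, in all three parts, is the low-frequency bookkeeping: unlike the pure fractional heat flow, the retained Fourier ball is fed by the nonlinear term $\widehat{u\theta}$, so closing the estimate forces one to feed back the very decay rate being proved. The delicate point is therefore the self-consistent (bootstrap) choice of exponents and the handling of the borderline time integral, which is the source of the $\epsilon$-loss and of the fact that the rates obtained are weaker than the optimal linear ones; verifying rigorously the positivity of $\int\theta^p\Lambda\theta$ and the fractional lower bound $\int\theta^{p-1}\Lambda^\alpha\theta\ge c\|\Lambda^{\alpha/2}\theta^{p/2}\|_{L^2}^2$ (via C\'ordoba--C\'ordoba) is the other technical step on which everything rests.
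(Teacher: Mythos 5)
Your parts (i) and (iii) follow the paper's own argument essentially step for step: for (i) the paper establishes exactly your Duhamel bound $|\hat\theta(\xi,t)|\le\|\theta_0\|_{L^1}+|\xi|\int_0^t\|\theta(\tau)\|_{L^2}^2d\tau$, the energy inequality $\frac{d}{dt}\|\theta\|_{L^2}^2+2\nu\|\Lambda^{\alpha/2}\theta\|_{L^2}^2\le 0$ via the sign of $\int\theta^2\Lambda\theta\,dx$, and then Schonbek splitting with $M^\alpha(t)\sim(1+t)^{-1}$, the $\epsilon$-loss coming from exactly the borderline time integral you identify; for (iii) the paper likewise works at the $\Lambda\theta$ level, absorbs the high-frequency part of $\|\Lambda^{2-\alpha/2}(\theta u_i)\|_{L^2}$ into the dissipation using $\alpha>1$, and feeds the decay from (i) into the resulting differential inequality to obtain the same (not better) rate, as you predict.

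Part (ii), however, is where you genuinely diverge from the paper, and where your proposal has a gap. The paper does \emph{not} use Fourier splitting there at all: it combines the $L^p$ energy inequality and Lemma \ref{le003} with the Sobolev lower bound $\|\Lambda^{\alpha/2}|\theta|^{p/2}\|_{L^2}^2\ge C\|\theta\|_{L^{pN/(N-\alpha)}}^{p}$ and the interpolation $\|\theta\|_{L^p}\le\|\theta\|_{L^2}^{1-\gamma}\|\theta\|_{L^{pN/(N-\alpha)}}^{\gamma}$, $\gamma=\frac{N(p-2)}{N(p-2)+2\alpha}$, to close a Bernoulli-type ODE $\frac{d}{dt}\|\theta\|_{L^p}^p+c\,\|\theta\|_{L^p}^{p/\gamma}\le 0$ (using only the \emph{boundedness} of $\|\theta\|_{L^2}$, not its decay), whose explicit solution decays at rate $\frac{\gamma}{p(1-\gamma)}=\frac{N(p-2)}{2p\alpha}$. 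Your Fourier-splitting alternative on $v=\theta^{p/2}$ does not produce this exponent: with $|\hat v|\le\|v\|_{L^1}$ and $R^\alpha\sim(1+t)^{-1}$, the computation yields $\|\theta\|_{L^p}^p\lesssim(1+t)^{-N/\alpha}$, i.e.\ the exponent $\frac{N}{p\alpha}$, which coincides with $\frac{N(p-2)}{2p\alpha}$ only at $p=4$. For $2<p\le 4$ your rate is in fact stronger, so the stated bound would still follow there, but for $p>4$ the argument breaks down at the point you gloss over: $\|v\|_{L^1}=\|\theta\|_{L^{p/2}}^{p/2}$ with $p/2>2$ cannot be controlled by ``interpolation against the conserved $L^1$ norm and the $L^2$ decay,'' since $L^{p/2}$ lies outside the interpolation range of $L^1$ and $L^2$; one would need an induction through dyadic ranges of $p$ (with $L^{p}$ control of the data at each stage), which your proposal neither states nor supplies. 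So the claimed derivation of the exponent in (ii) is incorrect as written, and the case $p>4$ is genuinely open in your scheme, whereas the paper's ODE argument handles all $p>2$ uniformly.
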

We also mention that, for the incompressible quasi-geostrophic
equations and the related models, there are a lot of results on the
existence, uniqueness and the regularity (see
\cite{4,1,18,17,2,5,16,12} and therein references). Nonlinear
evolution problems involving the fractal Laplacian describing the
anomalous diffusion, called the $\alpha$-stable L\'{e}vy diffusion,
have been extensively studied in the mathematical and physical
literature (see \cite{6,1,10,18,19} and therein references).

Before ending this section, we give some preliminary Lemmas and
recall some properties of the fractional operator $\Lambda^\alpha$,
which will be used later.

First, we need the following basic calculus inequality (see
\cite{KP98, 9}).
\begin{lemma}\label{le2.2}
For $s\ge 1$ and $1<r<p\leq \infty$,
\begin{eqnarray}
  && \|\Lambda^s(uv)\|_{L^r}\leq
   C(\|u\|_{L^p}\|\Lambda^sv\|_{L^q}+\|v\|_{L^p}\|\Lambda^su\|_{L^q}),\label{lam1}\\
  && \|\Lambda^s(fg)-f\Lambda^sg\|_{L^2}\le C(\|\nabla
f\|_{L^\infty}\|\Lambda^{s-1}g\|_{L^2}+\|g\|_{L^\infty}\|\Lambda^{s}f\|_{L^2})\label{lam2}
\end{eqnarray}
where $\frac{1}{p}+\frac{1}{q}=\frac{1}{r}$ and $C$ is a constant.
\end{lemma}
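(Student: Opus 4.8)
The two inequalities are the fractional Leibniz rule \eqref{lam1} and the Kato--Ponce commutator estimate \eqref{lam2}, and the natural framework is Littlewood--Paley theory together with Bony's paraproduct decomposition. The plan is to fix a dyadic partition of unity, writing $\Delta_j$ for the frequency projection onto $|\xi|\sim 2^j$ and $S_j=\sum_{k\le j-1}\Delta_k$ for the low-frequency cutoff, and to use two standard facts: that $\|\Lambda^s h\|_{L^r}$ is comparable to the norm of the square function $(\sum_j 2^{2sj}|\Delta_j h|^2)^{1/2}$ in $L^r$ for $1<r<\infty$, and that on a single annulus $|\xi|\sim 2^j$ one has the Bernstein bound $\|\Lambda^s\Delta_j h\|_{L^r}\le C\,2^{sj}\|\Delta_j h\|_{L^r}$. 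Note that the hypotheses force $1<r<\infty$ and $1<q<\infty$, so these tools apply to every factor that appears.

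For \eqref{lam1} I would use Bony's splitting $uv=T_uv+T_vu+R(u,v)$, with $T_uv=\sum_j S_{j-1}u\,\Delta_j v$ and $R(u,v)=\sum_{|j-k|\le1}\Delta_j u\,\Delta_k v$. For the paraproduct $T_uv$ the summand $S_{j-1}u\,\Delta_j v$ has frequency support in $|\xi|\sim 2^j$, so by the Bernstein bound and H\"older with $\frac1p+\frac1q=\frac1r$ one gets $\|\Lambda^s(S_{j-1}u\,\Delta_j v)\|_{L^r}\le C\,2^{sj}\|S_{j-1}u\|_{L^p}\|\Delta_j v\|_{L^q}\le C\|u\|_{L^p}\,2^{sj}\|\Delta_j v\|_{L^q}$, using the uniform boundedness of $S_{j-1}$. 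Since the supports are lacunary and finitely overlapping, almost-orthogonality and the square-function characterization sum these to $C\|u\|_{L^p}\|\Lambda^s v\|_{L^q}$. The term $T_vu$ is symmetric, and for the remainder $R(u,v)$ one distributes $\Lambda^s$ onto the diagonal sum and estimates in the same manner, giving the companion bound. This establishes \eqref{lam1}.

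For \eqref{lam2} I would paraproduct-decompose both $fg$ and $f\Lambda^s g$ so that
\[
\Lambda^s(fg)-f\Lambda^s g=[\Lambda^s,T_f]g+\bigl(\Lambda^s T_g f-T_{\Lambda^s g}f\bigr)+\bigl(\Lambda^s R(f,g)-R(f,\Lambda^s g)\bigr).
\]
In the second and third groups the rough factor $f$ carries all $s$ derivatives while the low-frequency factor built from $g$ is controlled in $L^\infty$, so these are bounded by $C\|g\|_{L^\infty}\|\Lambda^s f\|_{L^2}$ exactly as before. The crux is the commutator $[\Lambda^s,T_f]g=\sum_j\bigl(\Lambda^s(S_{j-1}f\,\Delta_j g)-S_{j-1}f\,\Lambda^s\Delta_j g\bigr)$: here $f$ is localized to $|\eta|\ll 2^j$ and $g$ to $|\zeta|\sim 2^j$, and the associated bilinear symbol is $|\eta+\zeta|^s-|\zeta|^s$. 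A first-order Taylor expansion gives
\[
|\eta+\zeta|^s-|\zeta|^s=s\,(\eta\cdot\zeta)\,|\zeta|^{s-2}+O\!\left(|\eta|^2|\zeta|^{s-2}\right),
\]
of size $|\eta|\,|\zeta|^{s-1}$, which factors the commutator into $\nabla(S_{j-1}f)$ times an operator of order $s-1$ on $\Delta_j g$, yielding the termwise bound $C\|\nabla f\|_{L^\infty}\,2^{(s-1)j}\|\Delta_j g\|_{L^2}$; summation gives $C\|\nabla f\|_{L^\infty}\|\Lambda^{s-1}g\|_{L^2}$.

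The main obstacle is making the symbol expansion rigorous, that is, verifying that the operator with symbol $\bigl(|\eta+\zeta|^s-|\zeta|^s\bigr)/|\eta|$ restricted to the paraproduct region is a genuine bounded bilinear multiplier of Coifman--Meyer type, so that the remainders are controlled and the $\ell^2$ sum converges. For non-integer $s$ the symbol $|\xi|^s$ fails to be smooth at the origin, but on the relevant region $|\zeta|\sim 2^j$ stays away from zero, so the needed derivative bounds hold after rescaling; this is precisely where the hypothesis $s\ge1$ is used, since it guarantees that one full derivative can be extracted onto $f$. Alternatively, one may invoke the Coifman--Meyer multiplier theorem directly to bypass the paraproduct bookkeeping, as in \cite{KP98, 9}.
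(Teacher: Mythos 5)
Your sketch is mathematically sound, but there is nothing in the paper to compare it against: the paper does not prove Lemma \ref{le2.2} at all. It is quoted as a known calculus inequality with a pointer to the literature \cite{KP98, 9}, and both estimates are then used only as black boxes (e.g.\ in the proofs of Theorems \ref{thm1.1}, \ref{thm1.3} and \ref{thm1.7}). What you have reconstructed is essentially the standard modern proof of those cited results: Bony decomposition plus Bernstein and square-function estimates for the product rule \eqref{lam1}, and for the commutator \eqref{lam2} the splitting into $[\Lambda^s,T_f]g$ plus the groups in which $f$ carries all $s$ derivatives, with the symbol $|\eta+\zeta|^s-|\zeta|^s$ Taylor-expanded in the region $|\eta|\ll|\zeta|$ and made rigorous by Coifman--Meyer; your algebraic identity for the decomposition is correct. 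Two caveats if you were to write this out in full. First, the remainder $R(u,v)$ cannot be handled ``in the same manner'' as the paraproducts: its dyadic pieces are supported in balls $|\xi|\le C2^{j}$ rather than annuli, so $\Lambda^s$ does not act like $2^{sj}$ on them; one must instead decompose the output frequency and sum a geometric series, and it is here (and in the companion terms such as $T_{\Lambda^s g}f$), not in the commutator term, that positivity of $s$ genuinely enters. Second, your closing assertion that $s\ge 1$ is ``precisely where the hypothesis is used'' misattributes its role: the derivative you move onto $f$ comes from the factor $\eta$, i.e.\ from the low frequency, which is legitimate for any $s>0$ because $|\eta|\ll|\zeta|$ in the paraproduct region, and the factor $2^{(s-1)j}$ sums to $\|\Lambda^{s-1}g\|_{L^2}$ regardless of the sign of $s-1$; indeed the commutator estimate in \cite{KP98} is stated for all $s>0$, so the lemma's hypothesis $s\ge 1$ is merely a convenient sufficient condition. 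Neither caveat breaks your argument, since $s\ge1$ implies $s>0$. In short, the paper buys brevity by citing; your route buys self-containedness at the price of the Littlewood--Paley and Coifman--Meyer machinery, which you correctly identify as the technical crux and which is also where the remaining details would have to be filled in.
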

We also need the following inequality for the Riesz potential (see
\cite{stein}).
\begin{lemma}\label{le2.3}
Assume $1<q<p<+\infty$, $0<\delta<N$ and
$\frac{1}{q}=\frac{1}{p}+\frac{\delta}{N}$. Then there exists a
constant $C>0$ such that
\begin{equation}\label{001}
    \|\Lambda^{-\delta} f\|_{L^p}\leq C\|f\|_{L^q}.
\end{equation}
\end{lemma}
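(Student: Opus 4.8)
The operator $\Lambda^{-\delta}=(-\Delta)^{-\delta/2}$ is, for $0<\delta<N$, precisely the Riesz potential: on the Fourier side it multiplies by $|\xi|^{-\delta}$, and in physical space this corresponds to convolution with a constant multiple of $|x|^{-(N-\delta)}$, so that
$$\Lambda^{-\delta}f(x)=c_{N,\delta}\int_{\mathbb{R}^N}\frac{f(y)}{|x-y|^{N-\delta}}\,dy=:I_\delta f(x).$$
The plan is to prove \eqref{001} by the pointwise method of Hedberg, which reduces the bound to the Hardy--Littlewood maximal function theorem. First I note that the scaling hypothesis $\frac1q=\frac1p+\frac\delta N$ with $p<\infty$ forces $\frac1q>\frac\delta N$, i.e. $q<\frac N\delta$; this is exactly the integrability threshold that will render the far-field part of the kernel finite.

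The core step is to split the convolution at a radius $R>0$ to be chosen, writing $I_\delta f(x)=\int_{|x-y|\le R}+\int_{|x-y|>R}$. I would bound the near piece by decomposing the ball into dyadic annuli $2^{-k-1}R<|x-y|\le 2^{-k}R$, estimating the average of $|f|$ over each by the maximal function $Mf(x)$, and summing the resulting geometric series in $R^\delta$; this gives a bound $C\,R^\delta\,Mf(x)$. For the far piece I would apply H\"older's inequality with conjugate exponent $q'$, and the condition $q<N/\delta$ guarantees that $\int_{|x-y|>R}|x-y|^{-(N-\delta)q'}\,dy$ converges, producing a bound $C\,R^{\delta-N/q}\,\|f\|_{L^q}$.

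Combining the two pieces yields $|I_\delta f(x)|\le C\big(R^\delta Mf(x)+R^{\delta-N/q}\|f\|_{L^q}\big)$, and optimizing in $R$ by balancing the two terms (taking $R\sim(\|f\|_{L^q}/Mf(x))^{q/N}$) gives the key pointwise inequality
$$|I_\delta f(x)|\le C\,\|f\|_{L^q}^{\,\delta q/N}\,\big(Mf(x)\big)^{1-\delta q/N}.$$
Raising this to the power $p$ and integrating, the scaling relation makes the exponents fit exactly, since $\delta q/N=1-q/p$ gives $p(1-\delta q/N)=q$ and $p\cdot\tfrac{\delta q}{N}+q=p$. Hence $\|I_\delta f\|_{L^p}^p\le C\,\|f\|_{L^q}^{\,p-q}\,\|Mf\|_{L^q}^q$.

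Finally I would invoke the Hardy--Littlewood maximal inequality $\|Mf\|_{L^q}\le C\|f\|_{L^q}$, valid precisely because $q>1$, to conclude $\|I_\delta f\|_{L^p}^p\le C\|f\|_{L^q}^p$, which is \eqref{001}. The main obstacle, and the reason the hypotheses $1<q<p<\infty$ cannot be relaxed, is exactly this reliance on the maximal theorem: it fails at $q=1$, where only the weak-type $(1,1)$ bound is available. An alternative route avoiding the pointwise estimate would instead prove the weak-type bound $\|I_\delta f\|_{L^{p,\infty}}\le C\|f\|_{L^q}$ directly and upgrade it via the Marcinkiewicz interpolation theorem; but the Hedberg argument is the more self-contained of the two, given that the maximal theorem is standard.
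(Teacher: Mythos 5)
Your proof is correct, but there is nothing in the paper to compare it against: the paper does not prove Lemma \ref{le2.3}, which is recalled as a known inequality for the Riesz potential with a citation to Stein's book. Your Hedberg-style argument is a complete, standard substitute. The individual steps all check out: the identification of $\Lambda^{-\delta}$ with convolution against $c_{N,\delta}|x|^{-(N-\delta)}$; the dyadic-annulus bound $CR^{\delta}Mf(x)$ for the near part (the geometric series converges precisely because $\delta>0$); the H\"older bound $CR^{\delta-N/q}\|f\|_{L^q}$ for the far part, whose convergence condition $(N-\delta)q'>N$ is equivalent to $q<N/\delta$ and is indeed forced by $\frac1q=\frac1p+\frac\delta N$ with $p<\infty$; the optimization in $R$ giving the pointwise estimate $|\Lambda^{-\delta}f(x)|\le C\|f\|_{L^q}^{\delta q/N}\bigl(Mf(x)\bigr)^{1-\delta q/N}$; and the exponent bookkeeping $\delta q/N=1-q/p$, so that $p(1-\delta q/N)=q$, after which the Hardy--Littlewood maximal theorem (here is the only place $q>1$ enters, exactly as you say) yields \eqref{001}. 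The one cosmetic caveat is the degenerate case $Mf(x)\in\{0,\infty\}$ in the optimization, which is harmless since $Mf$ is finite a.e.\ for $f\in L^q$ and $Mf\equiv 0$ forces $f=0$. Compared with the proof in the reference the paper cites --- which splits the kernel, derives weak-type bounds, and upgrades them by Marcinkiewicz interpolation, the alternative route you sketch at the end --- the Hedberg argument you chose is the more self-contained: it trades the interpolation machinery for a single appeal to the strong-type maximal inequality, and it makes transparent why both endpoint hypotheses $q>1$ and $p<\infty$ are needed.
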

Secondly, we recall the following point-wise estimate and positive
Lemma (see \cite{14, ju, castro}).
\begin{lemma}\label{prole}
Let $s\in [0, 2]$, $\beta\geq -1$ and $\theta\in \mathcal
{S}(\Omega)$, when $\Omega=\mathbb{R}^N$. Then the following
point-wise inequality holds:
\begin{equation}\label{positive}
|\theta(x)|^\beta \theta(x)\Lambda^s\theta(x)\geq
\frac{1}{\beta+2}\Lambda^s|\theta(x)|^{\beta+2}.
\end{equation}
\end{lemma}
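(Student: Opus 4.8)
The plan is to separate the three ranges $s=0$, $s=2$, and $0<s<2$, since only the last genuinely uses the nonlocal structure. For $s=0$ the operator $\Lambda^0$ is the identity, so the claimed inequality collapses to $\bigl(1-\frac{1}{\beta+2}\bigr)|\theta(x)|^{\beta+2}\ge 0$, which holds because $\beta+2\ge 1$ forces $\frac{1}{\beta+2}\le 1$. For $s=2$ I would write $\Lambda^2=-\Delta$ and compute directly, using the elementary identity $\nabla\bigl(|\theta|^{\beta}\theta\bigr)=(\beta+1)|\theta|^{\beta}\nabla\theta$. Expanding $\Delta|\theta|^{\beta+2}$ by the product rule gives $\frac{1}{\beta+2}\Delta|\theta|^{\beta+2}=(\beta+1)|\theta|^{\beta}|\nabla\theta|^2+|\theta|^{\beta}\theta\,\Delta\theta$, so the difference between the left- and right-hand sides of \eqref{positive} equals $(\beta+1)|\theta|^{\beta}|\nabla\theta|^2$, which is nonnegative precisely because $\beta\ge-1$.

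The heart of the matter is the range $0<s<2$, where I would invoke the singular-integral representation \eqref{reiz} (with $\alpha$ replaced by $s$), namely $\Lambda^s\theta(x)=C_s\,P.V.\int_{\mathbb{R}^N}\frac{\theta(x)-\theta(y)}{|x-y|^{N+s}}\,dy$, applied both to $\theta$ and to $|\theta|^{\beta+2}$. Forming the combination $|\theta(x)|^{\beta}\theta(x)\,\Lambda^s\theta(x)-\frac{1}{\beta+2}\Lambda^s|\theta(x)|^{\beta+2}$ and pulling everything under a single integral yields
\[
C_s\int_{\mathbb{R}^N}\frac{g\bigl(\theta(x),\theta(y)\bigr)}{|x-y|^{N+s}}\,dy,\qquad g(a,b)=|a|^{\beta}a\,(a-b)-\tfrac{1}{\beta+2}\bigl(|a|^{\beta+2}-|b|^{\beta+2}\bigr).
\]
Since the kernel $C_s|x-y|^{-(N+s)}$ is strictly positive, it suffices to establish the pointwise inequality $g(a,b)\ge 0$ for all real $a,b$.

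I would then recognize $g$ as a convexity statement. Setting $\phi(t)=|t|^{\beta+2}$, so that $\phi'(t)=(\beta+2)|t|^{\beta}t$, one checks $g(a,b)=\frac{1}{\beta+2}\bigl[\phi(b)-\phi(a)-\phi'(a)(b-a)\bigr]$, so $g(a,b)\ge 0$ is exactly the assertion that $\phi$ lies above each of its tangent lines. This is equivalent to convexity of $\phi$, which holds if and only if the exponent satisfies $\beta+2\ge 1$, i.e. $\beta\ge-1$ — exactly the hypothesis. A Taylor expansion near $b=a$ gives $g(a,b)=\frac12\phi''(a)(b-a)^2+o((b-a)^2)$, so the integrand vanishes to second order at $y=x$; after division by $|x-y|^{N+s}$ it is $O(|x-y|^{2-N-s})$, which is locally integrable for $s<2$, making the combined integral absolutely convergent so that the principal value causes no difficulty.

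The one technical subtlety, and the step I would handle most carefully, is the endpoint $\beta=-1$, where $\phi(t)=|t|$ fails to be differentiable at $t=0$ and the quantity $|a|^{\beta}a$ is ambiguous at $a=0$. There I would read the tangent-line inequality through the subdifferential: when $a=0$ the left member $|a|^{\beta}a\,(a-b)$ vanishes while the right member $-\frac{1}{\beta+2}|b|^{\beta+2}\le 0$, so $g(0,b)\ge 0$ still holds, and for $a\neq 0$ the function $\phi$ is smooth and the convexity argument applies verbatim. Apart from this boundary book-keeping the argument is routine, the essential point being the reduction of the pointwise positivity to the convexity of $t\mapsto|t|^{\beta+2}$.
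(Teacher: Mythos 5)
Your proof is correct, but note that the paper itself offers no proof of Lemma \ref{prole}: it is explicitly \emph{recalled} from the references \cite{14,ju,castro}, so the comparison here is with the cited literature rather than with anything in the text. Your argument --- dispatching the endpoints $s=0$ and $s=2$ directly, and for $0<s<2$ writing the difference of the two sides of \eqref{positive} as a single singular integral with integrand $g(\theta(x),\theta(y))/|x-y|^{N+s}$, then identifying $g(a,b)$ as $\frac{1}{\beta+2}$ times the tangent-line defect of the convex function $\phi(t)=|t|^{\beta+2}$ --- is precisely the classical C\'ordoba--C\'ordoba mechanism on which those references rest (for $\beta=0$ one has simply $g(a,b)=\frac12(a-b)^2$), so you have in effect reconstructed the cited proof rather than found a different one. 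One technical remark: your absolute-convergence claim via the expansion $g(a,b)=\frac12\phi''(a)(b-a)^2+o((b-a)^2)$ is only valid where $\phi''$ exists, i.e.\ where $\theta(x)\neq 0$ or $\beta\geq 0$; at a point with $\theta(x)=0$ and $-1\le\beta<0$ one only gets $g(0,\theta(y))=\frac{1}{\beta+2}|\theta(y)|^{\beta+2}=O(|x-y|^{\beta+2})$, and the integral can then diverge when $s\ge\beta+2$. The cleaner way to close this, avoiding the issue entirely, is to use the positivity you already established: since $g\ge 0$, every truncated integral $\int_{|x-y|\ge\epsilon}$ is nonnegative, so the principal-value limit --- finite or not --- is nonnegative, which is all the inequality asserts. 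Since the paper only ever invokes \eqref{positive} with $\beta=0$ (in the proof of Theorem \ref{thm1.1}), where $\phi$ is smooth, this corner case is harmless for the application.
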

\begin{lemma}\label{le003}
Suppose that  $s\in[0,2], x\in \mathbb{R}^N$ and $ \theta,
\Lambda^s\theta\in L^p$, with $p\in (1,+\infty)$. Then
\begin{eqnarray}\int_{\mathbb{R}^N}|\theta|^{p-2}\theta\Lambda^s\theta dx
\geq\frac 2p\int_{\mathbb{R}^N}(\Lambda^{\frac s2}|\theta|^{\frac
p2})^2dx\geq 0.\label{polem}\end{eqnarray}
\end{lemma}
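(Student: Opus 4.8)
The plan is to prove the two inequalities separately. The trailing bound $\int_{\mathbb{R}^N}(\Lambda^{s/2}|\theta|^{p/2})^2\,dx\ge 0$ is immediate, since the integrand is the square of a real function; equivalently, it follows from the pointwise estimate of Lemma \ref{prole} with $\beta=p-2$, which after integration gives $\int|\theta|^{p-2}\theta\,\Lambda^s\theta\,dx\ge\frac1p\int\Lambda^s|\theta|^p\,dx\ge 0$. All the content therefore lies in the first inequality, which I would establish by passing to the bilinear (Gagliardo) representation of the fractional operator. For $0<s<2$ I would use the singular integral formula \eqref{reiz}, namely $\Lambda^s\theta(x)=C_{N,s}\,P.V.\int_{\mathbb{R}^N}\frac{\theta(x)-\theta(y)}{|x-y|^{N+s}}\,dy$, valid on the Schwartz class and extended to the stated class by density, treating the endpoints $s=0$ and $s=2$ directly (see below).

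Substituting this formula into $I:=\int|\theta|^{p-2}\theta\,\Lambda^s\theta\,dx$ and symmetrizing in the integration variables (swap $x\leftrightarrow y$ and average) gives
\[ I=\frac{C_{N,s}}{2}\iint\frac{\big(|\theta(x)|^{p-2}\theta(x)-|\theta(y)|^{p-2}\theta(y)\big)\big(\theta(x)-\theta(y)\big)}{|x-y|^{N+s}}\,dx\,dy. \]
The same computation applied to $g=|\theta|^{p/2}$ yields the standard identity $\int(\Lambda^{s/2}g)^2\,dx=\int g\,\Lambda^s g\,dx=\frac{C_{N,s}}{2}\iint\frac{(g(x)-g(y))^2}{|x-y|^{N+s}}\,dx\,dy$, with the \emph{same} constant $C_{N,s}$, so that constant cancels. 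Thus the whole inequality reduces, kernel by kernel, to the elementary pointwise estimate, with $a=\theta(x),\ b=\theta(y)$,
\[ \big(|a|^{p-2}a-|b|^{p-2}b\big)(a-b)\ \ge\ \frac{2}{p}\big(|a|^{p/2}-|b|^{p/2}\big)^2. \]

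To prove this elementary inequality I would combine the fundamental theorem of calculus with Cauchy--Schwarz. Assuming without loss of generality $a>b$ and writing $F(t)=|t|^{p/2}$, so that $F'(t)=\frac{p}{2}|t|^{p/2-1}\operatorname{sgn}(t)$, one has $\big(|a|^{p/2}-|b|^{p/2}\big)^2=\big(\int_b^a F'(t)\,dt\big)^2\le (a-b)\int_b^a F'(t)^2\,dt=\frac{p^2}{4}(a-b)\int_b^a|t|^{p-2}\,dt$. Since $\int_b^a|t|^{p-2}\,dt=\frac{1}{p-1}\big(|a|^{p-2}a-|b|^{p-2}b\big)$, this yields the comparison with constant $\frac{p^2}{4(p-1)}$, and the claimed constant $\frac{2}{p}$ follows from $\frac{p^2}{4(p-1)}\le\frac{p}{2}$ together with the monotonicity of $t\mapsto|t|^{p-2}t$ (which makes the right-hand factor nonnegative). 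I expect the control of this constant to be the main obstacle: it is the step that determines the admissible range of $p$, and it must be matched against the endpoint cases, which I would check by hand. For $s=2$, integration by parts gives $\int|\theta|^{p-2}\theta(-\Delta\theta)\,dx=(p-1)\int|\theta|^{p-2}|\nabla\theta|^2\,dx$ while $\int\big|\nabla|\theta|^{p/2}\big|^2\,dx=\frac{p^2}{4}\int|\theta|^{p-2}|\nabla\theta|^2\,dx$, producing exactly the same constant comparison, and $s=0$ (where $\Lambda^0=\mathrm{Id}$) is checked trivially; both are consistent with the estimate obtained in the range $0<s<2$.
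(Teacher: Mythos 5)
First, a point of reference: the paper itself gives no proof of Lemma \ref{le003} --- it is simply recalled from \cite{14,ju,castro} --- so your argument must be measured against the proof in that cited literature (Ju's paper), which proceeds via a pointwise convexity inequality of C\'ordoba--C\'ordoba type rather than via the symmetrized quadratic form. Your route is sound and self-contained: representing $\Lambda^s$ by its singular kernel, symmetrizing to get
\[
\int_{\mathbb{R}^N}|\theta|^{p-2}\theta\,\Lambda^s\theta\,dx=\frac{C_{N,s}}{2}\int_{\mathbb{R}^N}\int_{\mathbb{R}^N}\frac{\bigl(|\theta(x)|^{p-2}\theta(x)-|\theta(y)|^{p-2}\theta(y)\bigr)\bigl(\theta(x)-\theta(y)\bigr)}{|x-y|^{N+s}}\,dx\,dy,
\]
doing the same for $g=|\theta|^{p/2}$ (with the same constant, via $\int(\Lambda^{s/2}g)^2dx=\int g\Lambda^s g\,dx$), and reducing to the two-point Stroock--Varopoulos inequality; your Cauchy--Schwarz derivation of $(a-b)\bigl(|a|^{p-2}a-|b|^{p-2}b\bigr)\ge\frac{4(p-1)}{p^2}\bigl(|a|^{p/2}-|b|^{p/2}\bigr)^2$ is correct.

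The gap is in the final constant comparison. You assert $\frac{p^2}{4(p-1)}\le\frac{p}{2}$, but this is \emph{equivalent} to $p\ge2$ and is false for $1<p<2$, while the lemma as printed claims $p\in(1,+\infty)$. This cannot be patched, because for $1<p<2$ the stated inequality \eqref{polem} is itself false, as your own endpoint computations reveal: at $s=2$ it reads $(p-1)\int|\theta|^{p-2}|\nabla\theta|^2dx\ge\frac{p}{2}\int|\theta|^{p-2}|\nabla\theta|^2dx$, i.e.\ $p\ge2$; and at $s=0$ (which you dismiss as trivial, but it is not) it reads $\int|\theta|^pdx\ge\frac{2}{p}\int|\theta|^pdx$, again forcing $p\ge2$. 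The hypothesis in the cited source is indeed $p\ge2$; the range $p\in(1,+\infty)$ is a misstatement of the present paper, and every application of \eqref{polem} in the paper involves exponents $\ge2$, so nothing downstream is affected. The fix is to state the restriction $p\ge2$ explicitly at the constant step; with that amendment your proof is complete, and in fact sharper than the statement, since it yields the constant $\frac{4(p-1)}{p^2}\ge\frac{2}{p}$. As written, however, the unqualified claim $\frac{p^2}{4(p-1)}\le\frac{p}{2}$ is an error, and the sentence deferring the ``control of this constant'' acknowledges the issue without resolving it.
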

Next, we recall the basic properties of the fractional operator
$\Lambda^\alpha$ (see \cite{stein}) and the Riesz transform.
\begin{lemma}\label{plam}
\begin{eqnarray}&&(i) \quad \Lambda\nabla=\nabla\Lambda ;\nonumber\\
&&(ii) \quad
\Lambda^\alpha\Lambda^\beta=\Lambda^{\alpha+\beta};\nonumber\\
&&(iii) \quad C^{-1}\|\nabla f\|_{L^2}\le\|\Lambda f\|_{L^2}\le C\|\nabla f\|_{L^2};\nonumber\\
&&(iv) \quad \|\mathcal{R}f\|_{L^p}\le C\|f\|_{L^p},
1<p<\infty\nonumber\end{eqnarray} for some positive constant $C$.
\end{lemma}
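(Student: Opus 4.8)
The plan is to prove all four statements by passing to the Fourier side, where $\Lambda^\alpha$, $\partial_j$, and $\mathcal{R}_j$ act as multiplication by the symbols $|\xi|^\alpha$, $i\xi_j$, and (up to a fixed constant) $-i\xi_j/|\xi|$ respectively. With this dictionary in hand, statements (i)--(iii) collapse to elementary pointwise identities among these symbols together with Plancherel's theorem, while (iv) is the one genuinely nontrivial assertion, requiring Calder\'on--Zygmund singular integral theory. Throughout I would first establish each identity on the Schwartz class and then extend by density to the relevant Sobolev or $L^p$ space.

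For (i), since $\partial_j$ carries the multiplier $i\xi_j$ and $\Lambda$ the multiplier $|\xi|$, and scalar multipliers commute pointwise, the operators $\nabla\Lambda$ and $\Lambda\nabla$ carry the identical symbol $i\xi_j|\xi|$, hence coincide. For (ii), the composition $\Lambda^\alpha\Lambda^\beta$ has symbol $|\xi|^\alpha|\xi|^\beta=|\xi|^{\alpha+\beta}$, which is exactly the symbol of $\Lambda^{\alpha+\beta}$, so equality of symbols forces equality of operators. For (iii), Plancherel's theorem yields $\|\nabla f\|_{L^2}^2=\sum_{j=1}^N\int_{\mathbb{R}^N}|\xi_j|^2|\hat f(\xi)|^2\,d\xi=\int_{\mathbb{R}^N}|\xi|^2|\hat f(\xi)|^2\,d\xi=\|\Lambda f\|_{L^2}^2$, so the two norms are in fact equal and the stated two-sided bound holds with $C=1$.

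The only substantive step is (iv). Here $\mathcal{R}_j$ is the convolution operator with kernel $K_j(x)=c_N\,x_j/|x|^{N+1}$, which is a standard Calder\'on--Zygmund kernel: it is homogeneous of degree $-N$, odd (hence of mean zero on spheres), and satisfies the size bound $|K_j(x)|\le C|x|^{-N}$ together with the H\"ormander regularity estimate $|\nabla K_j(x)|\le C|x|^{-N-1}$. Since its symbol $-i\xi_j/|\xi|$ is bounded, $\mathcal{R}_j$ is bounded on $L^2$ by Plancherel. Invoking the classical singular-integral theorem of \cite{stein}, the $L^2$ bound combined with the kernel conditions yields the weak-type $(1,1)$ estimate; Marcinkiewicz interpolation between weak-$(1,1)$ and $L^2$ then gives strong $L^p$ boundedness for $1<p\le 2$, and the range $2<p<\infty$ follows by duality, since the kernel is real and odd so that $\mathcal{R}_j^*=-\mathcal{R}_j$. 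Summing over $j$ controls $\mathcal{R}$.

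I expect (iv) to be the main obstacle, not on account of length but because it is the single part that cannot be reduced to an algebraic symbol identity: the weak-$(1,1)$ endpoint genuinely requires the Calder\'on--Zygmund decomposition, and the boundedness fails at $p=1$ and $p=\infty$, which is precisely why the claim is restricted to $1<p<\infty$. The cleanest route is to verify that $K_j$ meets the hypotheses of the singular-integral theory in \cite{stein} and then cite that theory, rather than reproducing the decomposition in detail.
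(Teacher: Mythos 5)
Your proposal is correct: the paper offers no proof of this lemma at all, merely recalling these facts with a citation to \cite{stein}, and your argument --- Fourier multipliers plus Plancherel for (i)--(iii) (where, as you note, (iii) actually holds with $C=1$), and Calder\'on--Zygmund theory (bounded symbol on $L^2$, weak-$(1,1)$ via the kernel conditions, interpolation, then duality using $\mathcal{R}_j^*=-\mathcal{R}_j$) for (iv) --- is precisely the standard proof from that reference. So your route coincides with the one the paper implicitly relies on, just written out in full.
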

Finally, we also give another property of the Riesz transform and
its proof.
\begin{prop}
\label{prop hilbert} Let $\phi$ be a continuous function on
$\mathbb{\mathbb{R}}^N$. For any $f\in
\mathcal{S}(\mathbb{R}^N)$$(\mathcal{S}(\mathbb{R}^N)$is the
Schwartz class on $\mathbb{R}^N)$, we have
\begin{equation}
\label{prop hil}
    \int_{\mathbb{R}^N}\phi(x)f(x)\mathcal {R}f(x)\mathd x
    =\frac{C_N}{2}\int_{\mathbb{R}^N}\int_{\mathbb{R}^N}
    \frac{(x-y)[\phi(x)-\phi(y)]}{|x-y|^{N+1}}f(x)f(y)\mathd x\mathd y .
  \end{equation}
where $C_N=(2\pi)^{-N}$.
  \end{prop}
\begin{proof}.
Denote $\D\widetilde{f_\epsilon}(x)=C_N\int_{\mathbb{R}^N,|x-y|\ge
\epsilon}\frac{(x-y) f(y)}{|x-y|^{N+1}}\mathd y$,\,$\D
F_\epsilon(x)=\phi(x)f(x)\widetilde{f_\epsilon}(x)$ and $\D\bar
f(x)=\sup_{\epsilon\ge 0}|\widetilde{f_\epsilon}(x)|$ . It follows
from the singular integral theory of Calderon-Zygmund \cite{CZ56}
that
\[\D\widetilde{f_\epsilon}(x)\rightarrow \mathcal {R}f(x),~~~\mbox{for a.e.}~x\in \mathbb{R}^N\]
and
\[
\|\bar f\|_{L^p(\mathbb{R}^N)}\le C_p\|f\|_{L^p(\mathbb{R}^N)}.
\]
Therefore, we have $F_\epsilon(x)\rightarrow \phi(x)f(x)\mathcal
{R}f(x)$, for a.e.\,$x\in \mathbb{R}^N$ and $|F_\epsilon(x)|\le
G(x),$ where $G(x)=|\phi(x)f(x)|\bar f(x)$ satisfies
\begin{eqnarray}\|G(x)\|_{L^1(\mathbb{R}^N)}&\le&\|\bar f(x)\|_{L^p(\mathbb{R}^N)}\|\phi(x)f(x)\|_{L^q(\mathbb{R}^N)}\nonumber\\
&\le&C_p\|f(x)\|_{L^p(\mathbb{R}^N)}\|\phi(x)f(x)\|_{L^q(\mathbb{R}^N)}<+\infty.
\nonumber
\end{eqnarray}
where $\frac{1}{p}+\frac{1}{q}=1,\,p>1$.\\
Using the Lebesgue Dominated Convergence Theorem, we have
\begin{eqnarray}
\int_{\mathbb{R}^N}\phi(x)f(x)\mathcal {R}(f)\mathd x & = &
\lim_{\epsilon\rightarrow 0}
\int_{\mathbb{R}^N}f(x)\phi(x) \D\widetilde{f_\epsilon}(x) \mathd x \nonumber \\
&=& C_N\lim_{\epsilon\rightarrow 0}
\int_{\mathbb{R}^N}f(x)\phi(x)\int_{\mathbb{R}^N,|x-y|\ge \epsilon}
\frac{(x-y)f(y)}{|x-y|^{N+1}}\mathd y\mathd x . \label{eqn-P1}
\end{eqnarray}
Note that
\begin{eqnarray*}
&~&\int_{\mathbb{R}^N}|f(y)| (\int_{\mathbb{R}^N,|x-y|\ge \epsilon}
\frac{|(x-y)f(x)\phi(x)|}{|x-y|^{N+1}}\mathd x )\mathd y \\
&\leq & \int_{\mathbb{R}^N}|f(y)|  (\int_{\mathbb{R}^N}\frac{2|f(x)\phi(x)|}{\epsilon+|x-y|^N}\mathd x )\mathd y \\
&\leq &2 \|\phi(x)f(x)\|_{L^q} \|(\epsilon+|x|^N)^{-1}\|_{L^p}
\int_{\mathbb{R}^N}|f(y)|\mathd y \\
&= & C\|f(y) \|_{L^1(\mathbb{R}^N)}
\|\phi(x)f(x)\|_{L^q(\mathbb{R}^N)} < \infty ,
\end{eqnarray*}
for each fixed $\epsilon >0$ since $f \in L^1(\mathbb{R}^N)$, $\phi
f \in L^q(\mathbb{R}^N)$ by our assumption, and $C \equiv 2
\|(\epsilon+|x|^N)^{-1}\|_{L^p(\mathbb{R}^N)} < \infty$ for
$p>1(\frac{1}{p}+\frac{1}{q}=1)$. Thus Fubini's Theorem implies that
\begin{eqnarray}
&~&\displaystyle C_N \int_{\mathbb{R}^N}f(x) \phi(x)
\int_{\mathbb{R}^N,|x-y|\ge \epsilon}
\frac{(x-y)f(y)}{|x-y|^{N+1}}\mathd y \mathd x \nonumber\\
&=&C_N \int_{\mathbb{R}^N}\int_{\mathbb{R}^N,|x-y|\ge \epsilon} f(x)
\phi(x) \displaystyle\frac{(x-y)f(y)}{|x-y|^{N+1}}\mathd y \mathd x,
\label{eqn-P2}
\end{eqnarray}
for each fixed $\epsilon > 0$. Furthermore, by renaming the
variables in the integration, we can rewrite $1/2$ of the integral
on the right hand side of \myref{eqn-P2} as follows:
\begin{eqnarray}
&~&\displaystyle\frac{C_N}{2}
\int_{\mathbb{R}^N}\int_{\mathbb{R}^N,|x-y|\ge \epsilon} f(x)f(y)
\frac{(x-y)\phi(x)}{|x-y|^{N+1}}
\mathd y\mathd x\nonumber\\
&=&- \displaystyle\frac{C_N}{2}
\int_{\mathbb{R}^N}\int_{\mathbb{R}^N,|x-y|\ge \epsilon} f(x)f(y)
\frac{(x-y)\phi(y)}{|x-y|^{N+1}} \mathd x\mathd y, \nonumber
\end{eqnarray}
which implies that
\begin{eqnarray}
&~&\displaystyle C_N \int_{\mathbb{R}^N}\int_{\mathbb{R}^N,|x-y|\ge
\epsilon} f(x)f(y) \frac{(x-y)\phi(x)}{|x-y|^{N+1}}
\mathd y\mathd x\nonumber\\
 &=&\displaystyle\frac{C_N}{2} \int_{\mathbb{R}^N}\int_{\mathbb{R}^N,|x-y|\ge \epsilon} f(x)f(y) \frac{(x-y)[\phi(x)-\phi(y)]}{|x-y|^{N+1}}
\mathd x\mathd y . \label{eqn-P3}
\end{eqnarray}
Since $f \in \mathcal {S}^(\mathbb{R}^N)$ and $\phi(x)$ is
continuous on $\mathbb{R}^{N}$, it is obvious that
\[
f(x) f(y) \frac{(x-y)[\phi(x)-\phi(y)]}{|x-y|^{N+1}} \in
L^1(R^{2N}).
\]
Using the Lebesgue Dominated Convergence Theorem, we have
\begin{eqnarray}
&~&\displaystyle\frac{C_N}{2}\lim_{\epsilon\rightarrow 0} \int_{\mathbb{R}^N}\int_{\mathbb{R}^N,|x-y|\ge \epsilon} f(x)f(y) \frac{(x-y)[\phi(x)-\phi(y)]}{|x-y|^{N+1}}\mathd x\mathd y\nonumber\\
&=&\displaystyle\frac{C_N}{2}\int_{\mathbb{R}^N}\int_{\mathbb{R}^N}f(x)f(y)
\frac{(x-y)[\phi(x)-\phi(y)]}{|x-y|^{N+1}}\mathd x\mathd y .
\label{eqn-P4}
\end{eqnarray}
Proposition \ref{prop hilbert} now follows from
\myref{eqn-P1}-\myref{eqn-P4}.
\end{proof}
The rest of this article is organized as follows. In section 2, we
prove the local existence and uniqueness of smooth non-negative
solutions or the strong solution to the system \eqref{1.1} with or
without the dissipation term. Section 3 is devoted to the global
existence and uniqueness of smooth or strong solutions for the
sub-critical and critical cases. In section 4, we prove the global
existence of weak solution and give one Leray-Prodi-Serrin condition
on uniqueness of the strong solution for the sub-critical case. An
example that the non-positive solution to the system \eqref{1.1}
with $\nu=0$ can not be global in time is also given. Finally, we
establish the decay rate of the smooth solution to the system
\eqref{1.1} in the sub-critical case as $t\to\infty$ in section 5.
%%%%%%%%%%%%%%%%%%%%%%%%%%%%%%%%%%%%%%%%%%%%%%%%%%%%%%%%%%%%%%%%%%第二部分
\section{Local existence: proofs of Theorems \ref{thm1.1} and \ref{thm1.2}}
\label{sec:1} In this section, we give proofs of Theorems
\ref{thm1.1} and \ref{thm1.2}.

{\bf The proof of Theorem \ref{1.1}}: For $\alpha=2$ and $\nu>0$,
the existence and uniqueness of local smooth solution is standard.
We will prove our results for the case $0\le\alpha<2$ and $\nu\ge 0$
by using the regularization method. We consider the regularization
system as follows:
\begin{eqnarray}&&\partial_t\theta^\epsilon+u^\epsilon\cdot\nabla\theta^\epsilon+\theta^\epsilon div u^\epsilon
=-\nu\Lambda^\alpha\theta^\epsilon+\epsilon\Delta\theta^\epsilon,
x\in
\mathbb{R}^N, t>0,\label{reg1}\\
&&u^\epsilon=\mathcal{R}\theta^\epsilon, div
u^\epsilon=div\mathcal{R}\theta^\epsilon=\Lambda\theta^\epsilon,x\in
\mathbb{R}^N, t>0,\label{reg2}\\
&&\theta^\epsilon(x, t)=\theta_0(x), x\in
\mathbb{R}^N,\label{reg3}\end{eqnarray} which, using the semigroup
theory, can be re-written into the equivalent integral form:
\begin{eqnarray}\theta^\epsilon(x, t)=e^{\epsilon t\Delta}\theta_0(x)+\int_0^te^{\epsilon (t-\tau)\Delta}(
-u^\epsilon\cdot\nabla\theta^\epsilon-\theta^\epsilon div
u^\epsilon-\nu\Lambda^\alpha\theta^\epsilon)(x, \tau)d\tau, x\in
\mathbb{R}^N, t>0.\label{regint1}
\end{eqnarray} Notice that the singular integral operator
$\Lambda^\alpha, 0\le\alpha<2,$ is of the order $\alpha,
0\le\alpha<2,$ and, hence, the system \eqref{reg1}-\eqref{reg3} is a
parabolic one of the order 2 with nonlocal singular integrals, which
are an operators from $H^s$ to $H^s$ for any $s\ge 0$. Thus, it is
easy to prove, by the standard parabolic theory and using the fact
that $\|u^\epsilon\|_{H^s}\le C\|\theta^\epsilon\|_{H^s},
\|\Lambda\theta^\epsilon\|_{H^s}\le C\|\theta^\epsilon\|_{H^{s+1}}$,
that, for any $\epsilon>0$, there exists $T^\epsilon>0$ such that
the system \eqref{reg1}-\eqref{reg3} has a unique smooth solution
$\theta\in C(0, T^\epsilon; H^s(\mathbb{R}^N))\bigcap C^1(0,
T^\epsilon; H^{s-2}(\mathbb{R}^N))$. Moreover, using the fact that,
if $\theta(x_0, t_0)=\min_{x\in \mathbb{R}^N, t\ge
0}\theta^\epsilon(x, t)$, then
$$\Lambda^\alpha\theta^{\epsilon}(x_0, t_0)=C_\alpha
P.V.\int_{\mathbb{R}^N}\frac{\theta^{\epsilon}(x_0, t_0)
-\theta^{\epsilon}(y, t)}{|x_0-y|^{N+\alpha}}dy\leq 0,$$ it is easy
to prove that, if $\theta_0(x)\ge 0$ in $\mathbb{R}^N$, then
$\theta^\epsilon\ge 0$ in $\mathbb{R}^N\times [0, T^\epsilon)$.

In the following we want to prove that, if $\theta_0\in
H^s(\mathbb{R}^N), s>\frac N2,$ satisfying $\theta_0\ge 0$, then
there exist a time $T_0=T_0(\theta_0)>0$ and a positive constant
$M$, independent of $\epsilon$ such that, for all $\epsilon>0$, the
solution $\theta^\epsilon$ of the system \eqref{reg1}-\eqref{reg3}
satisfies $\theta^\epsilon\ge 0$ and
\begin{eqnarray}\sup_{0\le t\le T_0}\|\theta^\epsilon(x,
t)\|_{H^s(\mathbb{R}^N)}+\sup_{0\le t\le
T_0}\|\partial_t\theta^\epsilon(x,
t)\|_{H^{s-2}(\mathbb{R}^N)}+\int_0^{T_0}\|\theta^\epsilon(x,
t)\|_{H^{s+\frac \alpha2}(\mathbb{R}^N)}dt\le M.\label{regunif1}
\end{eqnarray}
Multiplying the equation (\ref{reg1}) by
$\Lambda^{2s}\theta^\epsilon$ and integrating by parts, we obtain
\begin{eqnarray}&&\frac{1}{2}\frac{d}{dt}\|\Lambda^s\theta^\epsilon\|_{L^2}^2
+\nu\|\Lambda^{s+\frac{\alpha}{2}}\theta^\epsilon\|_{L^2}^2+\epsilon\|\Lambda^{s+1}\theta^\epsilon\|_{L^2}^2\nonumber\\
 &=&-\int  _{\mathbb{R}^N}\Lambda^s(u^\epsilon\cdot\nabla\theta^\epsilon+\theta^\epsilon div u^\epsilon)\cdot\Lambda^s\theta^\epsilon dx \nonumber \\
 &=&- \int  _{\mathbb{R}^N}\Lambda^s(u^\epsilon\cdot\nabla\theta^\epsilon)\Lambda^s \theta^\epsilon dx
 -\int  _{\mathbb{R}^N}\Lambda^s(\theta^\epsilon\Lambda\theta^\epsilon)\Lambda^s \theta^\epsilon  dx \nonumber \\
 &\equiv&I_1+I_2.\label{est}
 \end{eqnarray}
For the first term $I_1$, we have
\begin{eqnarray}
I_{1}&=&- \int
_{\mathbb{R}^N}\Lambda^s(u^\epsilon\cdot\nabla\theta^\epsilon)\Lambda^s
\theta^\epsilon dx\nonumber\\
&=&-\int _{\mathbb{R}^N}
u^\epsilon\cdot\nabla\Lambda^s\theta^\epsilon\Lambda^s
\theta^\epsilon dx-\int _{\mathbb{R}^N}[\Lambda^s(u^\epsilon\cdot
\nabla
\theta^\epsilon)-u^\epsilon\cdot\Lambda^s(\nabla\theta^\epsilon)]\Lambda^s\theta^\epsilon
dx  \nonumber \\
&=&-\int _{\mathbb{R}^N}
u^\epsilon\cdot\nabla\frac{|\Lambda^s\theta^\epsilon|^2}2 dx-\int
_{\mathbb{R}^N}[\Lambda^s(u^\epsilon\cdot \nabla
\theta^\epsilon)-u^\epsilon\cdot\Lambda^s(\nabla\theta^\epsilon)]\Lambda^s\theta^\epsilon
dx  \nonumber\\
&=&\int _{\mathbb{R}^N} \Lambda\theta^\epsilon
\frac{|\Lambda^s\theta^\epsilon|^2}2 dx-\int
_{\mathbb{R}^N}[\Lambda^s(u^\epsilon\cdot \nabla
\theta^\epsilon)-u^\epsilon\cdot\Lambda^s(\nabla\theta^\epsilon)]\Lambda^s\theta^\epsilon
dx  \nonumber \\
&\leq&\frac 12\|\Lambda\theta^\epsilon\|_{L^\infty}\int
_{\mathbb{R}^N} |\Lambda^s\theta^\epsilon|^2
dx+\|\Lambda^s(u^\epsilon\cdot\nabla\theta^\epsilon)
-u^\epsilon\cdot\Lambda^s(\nabla\theta^\epsilon)\|_{L^2}\|\Lambda^s\theta^\epsilon\|_{L^2}\nonumber\\
&\le&\frac 12\|\Lambda\theta^\epsilon\|_{L^\infty}\int
_{\mathbb{R}^N} |\Lambda^s\theta^\epsilon|^2 dx+C(\|\nabla
u^\epsilon\|_{L^\infty}\|\Lambda^{s-1}\nabla\theta^\epsilon\|_{L^2}+\|\nabla\theta^\epsilon\|_{L^\infty}\|\Lambda^s
u^\epsilon\|_{L^2})\|\Lambda^s\theta^\epsilon\|_{L^2}\nonumber \\
&\le&C\|\Lambda\theta^\epsilon\|_{H^{s-1}}\int _{\mathbb{R}^N}
|\Lambda^s\theta^\epsilon|^2 dx+C(\|\nabla
u^\epsilon\|_{H^{s-1}}\|\Lambda^{s-1}\nabla\theta^\epsilon\|_{L^2}+\|\nabla\theta^\epsilon\|_{H^{s-1}}\|\Lambda^s
u^\epsilon\|_{L^2})\|\Lambda^s\theta^\epsilon\|_{L^2}\nonumber\\
%\end{eqnarray}
%\begin{eqnarray}
&\le&C\|\Lambda^s\theta^\epsilon\|_{L^2}\int _{\mathbb{R}^N}
|\Lambda^s\theta^\epsilon|^2 dx+C\|\Lambda^s
\theta^\epsilon\|_{L^{2}}\|\Lambda^{s}\theta^\epsilon\|_{L^2}\|\Lambda^s\theta^\epsilon\|_{L^2}
\le
C\|\Lambda^s\theta^\epsilon\|_{L^2}^3.\label{esti1}\end{eqnarray}
Here $C$ is a positive constant independent of $\epsilon$, and we
have used the fact that $\|\nabla f\|_{H^s}\le C\|\Lambda
f\|_{H^s}$, $\|u^\epsilon\|_{H^s}\le C\|\theta^\epsilon\|_{H^s}$ for
$u^\epsilon=\mathcal{R}\theta^\epsilon$ and the inequality
\eqref{lam2}.

For the second term $I_2$, using the fact that $\theta^\epsilon\ge
0$, the pointwise estimate \eqref{positive} with $\beta=0$ for the
operator $\Lambda$ and the inequality \eqref{lam2}, we have
\begin{eqnarray}\label{cc}
I_{2}&=&-\int _{\mathbb{R}^N}
\theta^\epsilon\Lambda(\Lambda^s\theta^\epsilon)\Lambda^s
\theta^\epsilon dx-\int
_{\mathbb{R}^N}[\Lambda^s(\theta^\epsilon\Lambda
\theta^\epsilon)-\theta^\epsilon\Lambda^s(\Lambda\theta^\epsilon)]\Lambda^s\theta^\epsilon
dx\nonumber \\
&\leq&- \int _{\mathbb{R}^N}
\theta^\epsilon\Lambda\frac{|\Lambda^s\theta^\epsilon|^2}2
dx+C(\|\Lambda\theta^\epsilon\|_{L^\infty}+\|\nabla\theta^\epsilon\|_{L^\infty})\|\Lambda^s\theta^\epsilon\|_{L^2}^2 \nonumber \\
&=&\frac 12\int _{\mathbb{R}^N} \Lambda\theta^\epsilon
|\Lambda^s\theta^\epsilon|^2
dx+C(\|\Lambda\theta^\epsilon\|_{L^\infty}+\|\nabla\theta^\epsilon\|_{L^\infty})\|\Lambda^s\theta^\epsilon\|_{L^2}^2 \nonumber \\
&\le&C(\|\Lambda\theta^\epsilon\|_{L^\infty}+\|\nabla\theta^\epsilon\|_{L^\infty})\|\Lambda^s\theta^\epsilon\|_{L^2}^2 \nonumber \\
&\le&C\|\Lambda^s\theta^\epsilon\|_{L^2}^3. \label{esti2}
\end{eqnarray} Combining \eqref{est} with \eqref{esti1} and
\eqref{esti2}, one have
\begin{eqnarray}\label{est1}
&&\frac{1}{2}\frac{d}{dt}\|\Lambda^s\theta^\epsilon\|_{L^2}^2+\nu\|\Lambda^{s+\frac{\alpha}{2}}\theta^\epsilon\|_{L^2}^2
+\epsilon\|\Lambda^{s+1}\theta^\epsilon\|_{L^2}^2\le
C\|\Lambda^s\theta^\epsilon\|_{L^2}^3,
\end{eqnarray}
which claims that there exist a time $T_0>0$ and a constant
$M(T)>0$, independent of $\epsilon$, such that $\sup_{0\le t\le
T_0}\|\Lambda^s\theta^\epsilon(\cdot,
t)\|_{L^2}+\int_0^{T_0}\|\Lambda^{s+\frac
\alpha2}\theta^\epsilon(\cdot, t)\|_{L^2}^2dt\le M(T_0)$. Then, by
the equations \eqref{reg1} and \eqref{reg2}, the uniform estimate
for $\partial_t\theta^\epsilon$ with respect to $\epsilon$ can be
obtained easily.

Now combining the above estimates with the compactness argument,
letting $\epsilon\to 0$, we obtain the desired results on the local
smooth solutions to the system \eqref{1.1}. Moreover, it follows
from \eqref{est1} that, if $0<T<\infty$, $T$ is the maximal
existence time of the solution to the system \eqref{1.1}, then
$\int_0^T\|\theta(\cdot, t)\|_{H^s}dt=\infty$ or
$\int_0^T(\|\Lambda\theta\|_{L^\infty}+\|\nabla\theta\|_{L^\infty}+\|\nabla
u\|_{L^\infty})dt=\infty$.

Next we give the proof of uniqueness. Let $T>0$ be the maximal
existence time of the solution to the system \eqref{1.1}, and assume
that $\theta_1, \theta_2\in C([0, T^*]; H^{s}), T^*<T,$ are two
solutions to (\ref{1.1}) with velocities $u_1=\mathcal{R}\theta_1$
and $u_2=\mathcal{R}\theta-2$, respectively, and the same initial
data $\theta_0\in H^s$. Denote $\theta=\theta_1-\theta_2$ and
$u=u_1-u_2$, then we have
\begin{equation}\label{bb}
\partial_t\theta+u\cdot\nabla\theta_1+u_2\cdot\nabla\theta+\theta
divu_1+\theta_2 divu=-\nu\Lambda^\alpha \theta .
\end{equation}
Multiplying both hand side of the equation (\ref{bb}) by $\theta$,
we have
\begin{equation}
\frac{1}{2}\frac{d}{dt}\|\theta\|_{L^2}^2+\int _{\mathbb{R}^N}
u\cdot\nabla \theta_1\theta dx+\int _{\mathbb{R}^N}
u_2\cdot\nabla\theta\theta dx+\int _{\mathbb{R}^N} \theta\Lambda
\theta_1\theta dx+\int _{\mathbb{R}^N} \theta_2\Lambda\theta\theta
dx =-\int _{\mathbb{R}^N} \nu\Lambda^\alpha\theta\theta dx.
\end{equation}
We can calculate
\begin{eqnarray}
&&\frac{1}{2}\frac{d}{dt}\|\theta\|_{L^2}^2+\nu\|\Lambda^{\frac{\alpha}{2}}\theta\|_{L^2}^2  \nonumber\\
&\leq& -\int _{\mathbb{R}^N} u\cdot\nabla\theta_1\theta
dx-\frac{1}{2}\int _{\mathbb{R}^N} u_2\cdot\nabla|\theta|^2dx-\int
_{\mathbb{R}^N} \theta\theta\Lambda\theta_1dx-\frac{1}{2}\int
_{\mathbb{R}^N}
\theta_2\Lambda\theta^2dx\nonumber\\
&\leq& C(\|\nabla\theta_1\|_{L^\infty}\|\theta\|_{L^2}^2+\|\nabla
u_2\|_{L^\infty}\|\theta\|_{L^2}^2+\|\Lambda\theta_1\|_{L^\infty}\|\theta\|_{L^2}^2
+\frac{1}{2}\|\Lambda\theta_2\|_{L^\infty}\|\theta\|_{L^2}^2)\nonumber\\
&\leq&
C(\|\theta_1\|_{H^s}+\|u_2\|_{H^s}+\|\theta_1\|_{H^s}+\|\theta_2\|_{H^s})\|\theta\|_{L^2}^2
\nonumber\\
&\le&C(\|\theta_1\|_{H^s}+\|\theta_2\|_{H^s})\|\theta\|_{L^2}^2.\label{uni1}
\end{eqnarray}
Here we use $s>\frac{N}{2}+1$ and $\theta_2\ge 0$. Applying the
Gronwall's inequality to the inequality (\ref{uni1}) and using the
fact that $\|\theta_1(t)\|_{H^s}$ and $\|\theta_2(t)\|_{H^s}$ is
bounded for $t\in [0, T^*]$, we can obtain the desired uniqueness
result.

{\bf Proof of Theorem \ref{thm1.2}} We will prove Theorem
\ref{thm1.2} by using the fixed point principle by constructing
contraction mapping.

We re-write the system \eqref{1.1} into the equivalent integral
system
\begin{equation} \theta (x,
t)=G_\alpha(t)\theta_0(x)-\int_0^tG_\alpha(t-\tau)div(u\theta)(\tau)d\tau,\label{thetaint}\end{equation}
where $G_\alpha(t)$ is given by the Fourier transform
$\widehat{G_\alpha(t)}=e^{-\nu|\xi|^\alpha t}$, and satisfies the
following boundedness \cite{2,5,7}.
\begin{lemma}\label{lem2.1}
Assume $1\leq p\leq q\leq \infty$. Then, for any $t>0$, the
operators $G_\alpha(t)$ and $\nabla G_\alpha(t)$ are bounded from
$L^p$ to $L^q$. Furthermore, we have,  for any $f\in L^p$, that
\begin{eqnarray}&&\|G_\alpha(t)f\|_{L^q}\leq C
t^{-\frac{2}{\alpha}(\frac{1}{p}-\frac{1}{q})}\|f\|_{L^p},\label{pq1}\\
&&\|\nabla G_\alpha(t)f\|_{L^q}\leq C
t^{-\frac{1}{\alpha}-\frac{2}{\alpha}(\frac{1}{p}-\frac{1}{q})}\|f\|_{L^p},\label{pq2}\end{eqnarray}
where $C$ is a constant depending only on $\alpha, p$ and $q$.

Further, assume that $u$ and $\theta$ are in $L^q([0,T];
L^p(\mathbb{R}^N))$, then the operator $A(u,\theta)\equiv
\int_0^t\nabla G_\alpha(t-\tau)
 (u\theta)d\tau$ is bounded in $L^q([0,T];L^p(\mathbb{R}^N))$ with
 \begin{eqnarray}\|A(u,\theta)\|_{L^q([0,T];L^p(\mathbb{R}^N))}\leq C\|u\|_{L^q([0,T];L^p(\mathbb{R}^N))}\cdot
 \|\theta\|_{L^q([0,T];L^p(\mathbb{R}^N))},\label{pq3}\end{eqnarray} where $C$ is a constant depending only on
 $\alpha, p$ and $q$.
\end{lemma}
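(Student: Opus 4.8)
The plan is to realize $G_\alpha(t)$ as a convolution operator and to reduce every assertion to the scaling and integrability of its kernel. Write $G_\alpha(t)f=g_\alpha(\cdot,t)*f$ with $g_\alpha(\cdot,t)=\mathcal{F}^{-1}(e^{-\nu t|\xi|^\alpha})$. For $0<\alpha\le 2$ the symbol $e^{-\nu t|\xi|^\alpha}$ is the characteristic function of a symmetric $\alpha$-stable law, so $g_\alpha(\cdot,t)\ge 0$, $\int_{\mathbb{R}^N}g_\alpha(x,t)\,\mathd x=1$, and (for $\alpha<2$) $g_\alpha(\cdot,1)$ is smooth with the classical decay $g_\alpha(x,1)\le C(1+|x|)^{-(N+\alpha)}$ and $|\nabla g_\alpha(x,1)|\le C(1+|x|)^{-(N+\alpha+1)}$; for $\alpha=2$ it is the Gaussian. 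In particular $g_\alpha(\cdot,1),\nabla g_\alpha(\cdot,1)\in L^r(\mathbb{R}^N)$ for every $1\le r\le\infty$. The structural fact I would isolate first is the self-similarity $g_\alpha(x,t)=t^{-N/\alpha}g_\alpha(xt^{-1/\alpha},1)$, immediate from the homogeneity of $|\xi|^\alpha$ under $\xi\mapsto t^{1/\alpha}\xi$.

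Granting these kernel facts, the pointwise-in-time bounds \eqref{pq1} and \eqref{pq2} follow directly from Young's convolution inequality. For \eqref{pq1} I would take $1+\frac1q=\frac1r+\frac1p$ and estimate $\|G_\alpha(t)f\|_{L^q}\le\|g_\alpha(\cdot,t)\|_{L^r}\|f\|_{L^p}$; the scaling identity gives $\|g_\alpha(\cdot,t)\|_{L^r}=t^{-\frac{N}{\alpha}(1-\frac1r)}\|g_\alpha(\cdot,1)\|_{L^r}=C\,t^{-\frac{N}{\alpha}(\frac1p-\frac1q)}$, which is the $t$-power recorded in \eqref{pq1}. For \eqref{pq2} the same computation applied to $\nabla g_\alpha(x,t)=t^{-\frac{N}{\alpha}-\frac1\alpha}(\nabla g_\alpha)(xt^{-1/\alpha},1)$ produces the extra factor $t^{-1/\alpha}$, giving the stated exponent $-\frac1\alpha-\frac{N}{\alpha}(\frac1p-\frac1q)$.

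For the bilinear space-time bound \eqref{pq3} I would first treat the spatial variable by H\"older, $\|(u\theta)(\tau)\|_{L^{p/2}}\le\|u(\tau)\|_{L^p}\|\theta(\tau)\|_{L^p}$, and then feed $u\theta\in L^{p/2}$ into the smoothing estimate \eqref{pq2} with target $L^p$, which yields $\|\nabla G_\alpha(t-\tau)(u\theta)(\tau)\|_{L^p}\le C(t-\tau)^{-a}\|u(\tau)\|_{L^p}\|\theta(\tau)\|_{L^p}$ with $a=\frac1\alpha+\frac{N}{\alpha}\cdot\frac1p$. Integrating in $\tau$ reduces the claim to a one-dimensional fractional-integration estimate for the convolution of the kernel $(t-\tau)^{-a}\mathbf{1}_{[0,T]}$ against $g(\tau):=\|u(\tau)\|_{L^p}\|\theta(\tau)\|_{L^p}$. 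Since $\|g\|_{L^{q/2}([0,T])}\le\|u\|_{L^q_TL^p}\|\theta\|_{L^q_TL^p}$ by H\"older in time, I would close the estimate with Young's inequality in $t$ (equivalently Hardy--Littlewood--Sobolev), which requires $\int_0^T(t-\tau)^{-a\frac{q}{q-1}}\,\mathd\tau<\infty$, i.e. $a<1-\frac1q$; here the finiteness of $T$ absorbs the tail of the time kernel.

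The main obstacle is not the convolution analysis but the input kernel information for $0<\alpha<2$: establishing nonnegativity, the sharp polynomial decay of $g_\alpha(\cdot,1)$ and $\nabla g_\alpha(\cdot,1)$, and hence their membership in every $L^r$. These are standard facts from the theory of $\alpha$-stable semigroups, but they are exactly what make \eqref{pq1}--\eqref{pq2} quantitative. A secondary point requiring care is the exponent bookkeeping in \eqref{pq3}: the H\"older step needs $p\ge 2$ so that $u\theta\in L^{p/2}$ is admissible in Young's inequality (for $1<p<2$ one must split the integrability differently or interpolate), and the power $a$ must stay below the threshold $1-\frac1q$ so that the time convolution is bounded. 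Verifying that these constraints are compatible with the exponents used in the contraction argument for Theorem \ref{thm1.2} is where the ``any $q>1$'' assertion must be checked.
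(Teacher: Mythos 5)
The paper offers no proof of Lemma \ref{lem2.1} to compare against: it is quoted directly from the references \cite{2,5,7}, so your proposal has to be judged against the standard argument in that literature. Your route is precisely that standard argument --- realize $G_\alpha(t)$ as convolution with the $\alpha$-stable density, get positivity, unit mass and the decay $g_\alpha(x,1)\le C(1+|x|)^{-(N+\alpha)}$, $|\nabla g_\alpha(x,1)|\le C(1+|x|)^{-(N+\alpha+1)}$ from stable-semigroup theory, then combine the self-similar scaling $g_\alpha(x,t)=t^{-N/\alpha}g_\alpha(xt^{-1/\alpha},1)$ with Young's inequality in space and Young/Hardy--Littlewood--Sobolev for the time convolution --- and it is sound in substance.

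Two places, however, where you assert agreement with the printed lemma that does not actually hold, and in both cases it is the printed lemma that is off, so you should flag the discrepancy rather than paper over it. First, your scaling computation yields $\|G_\alpha(t)f\|_{L^q}\le Ct^{-\frac{N}{\alpha}(\frac1p-\frac1q)}\|f\|_{L^p}$, whereas \eqref{pq1}--\eqref{pq2} carry the factor $\frac{2}{\alpha}(\frac1p-\frac1q)$; these coincide only for $N=2$, so your claim that your exponent ``is the $t$-power recorded in \eqref{pq1}'' is false in general dimension. Scaling forces $N/\alpha$ (the paper's own $L^1\to L^2$ semigroup computation, in the fragment left after the bibliography in the source, produces $t^{-N/(2\alpha)}$ and corroborates this), so \eqref{pq1}--\eqref{pq2} contain a typo inherited from the two-dimensional QG literature. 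Second, for \eqref{pq3} your bookkeeping is correct and shows the lemma's blanket claim needs qualification: the spatial H\"older step needs $p\ge2$, the time kernel exponent $a=\frac1\alpha+\frac{N}{\alpha p}$ must satisfy $a<1-\frac1q$ (in particular $\alpha>1$ and $p>\frac{N}{\alpha-1}$, exactly the restriction surfacing in Theorem \ref{thm1.3}), and Young's inequality in time then produces a constant proportional to $T^{1-\frac1q-a}$ rather than one ``depending only on $\alpha,p,q$''; a genuinely $T$-independent constant is available only at the scaling-critical relation $\frac1\alpha+\frac{N}{\alpha p}=1-\frac1q$ via Hardy--Littlewood--Sobolev, the same balance that reappears in Theorem \ref{thm1.6}. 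None of this damages the application: the positive power of $T$ only helps the contraction in the proof of Theorem \ref{thm1.2}. But the restrictions on $(p,q)$ are real, and your proof is the place where they become visible, so state them as hypotheses instead of presenting the lemma as holding for all $1\le p\le q\le\infty$ and ``any $q>1$''.
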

For $l=0$, define the space $X=\{\theta\in L^q([0,T];L^p):
\|\theta\|_X\le M<\infty\}$ with the norm
$\|\cdot\|_X=\|\cdot\|_{L^q([0,T];L^p)}$, and define the mapping $F$
mapping $\theta\in X$ to $F(\theta)$ by
\begin{equation} F(\theta) (x,
t)=G_\alpha(t)\theta_0(x)-\int_0^tG_\alpha(t-\tau)div(u\theta)(\tau)d\tau\label{map1}\end{equation}
with the velocity $u=\mathcal{R}\theta$. In the following, we will
prove that

(i) If $\theta\in X$, then $F(\theta)\in X$;

(ii) For any $\theta, \tilde{\theta}\in X$, then
$\|F(\theta)-F(\tilde{\theta})\|_X\le \frac
12\|\theta-\tilde{\theta}\|_X$ for some $T>0$.

In fact, by using \eqref{pq1} in Lemma \ref{lem2.1}, we can easily
conclude that $F(0)=G_\alpha (t)\theta_0$ is bounded in
$L^q([0,T];L^p(\mathbb{R}^N))$, i.e.,
\begin{eqnarray}\label{32}
% \nonumber to remove numbering (before each equation)
  \|G_\alpha(t)\theta_0\|_{L^q([0,T];L^p)}&=& [\int_0^T\|G_\alpha(t)\theta_0\|_{L^p}^qdt]^{\frac{1}{q}} \nonumber \\
  &\leq&[\int_0^T\|\theta_0\|_{L^p}^qdt]^{\frac{1}{q}} \nonumber \\
  &\leq&\|\theta_0\|_{L^p}[\int_0^Tdt]^{\frac{1}{q}} \nonumber \\
   &\leq& CT^{\frac 1q}\|\theta_0\|_{L^p}.
\end{eqnarray}
Now we choose $M=3CT^{\frac 1q}\|\theta_0\|_{L^p}$ sufficiently
small by using $\alpha>1$ and letting $T$ sufficiently small, and
hence we have
$\|F(0)\|_{L^q([0,T];L^p)}=\|G_\alpha(t)\theta_0\|_{L^q([0,T];L^p)}\leq
\frac{M}{3}$.

Let $\theta$ and $\tilde{\theta}$ be any two elements of $X$, where
$u$ and $\tilde{u}$ be the velocities corresponding to $\theta$ and
$\tilde{\theta}$, respectively. Then, using \eqref{pq3} in Lemma
\ref{lem2.1}, we have
\begin{eqnarray}\label{33}
% \nonumber to remove numbering (before each equation)
  &&\|F(\theta)-F(\tilde{\theta})\|_{L^q([0,T];L^p)}\nonumber\\
  &=& \|\int_0^t\nabla G(t-\tau)(u\theta)(\tau)d\tau
  -\int_0^t\nabla G(t-\tau)(\tilde{u}\tilde{\theta})(\tau)d\tau\|_{L^q([0,T];L^p)}\nonumber\\
   &=& \|A(u,\theta)-A(\tilde{u},\tilde{\theta})\|_{L^q([0,T];L^p)}\nonumber \\
   &=& \|A(u-\tilde{u},\theta)+A(\tilde{u},\theta-\tilde{\theta})\|_{L^q([0,T];L^p)}\nonumber\\
  &\leq& \|A(u-\tilde{u},\theta)\|_{L^q([0,T];L^p)}+ \|A(\tilde{u},\theta-\tilde{\theta})\|_{L^q([0,T];L^p)}\nonumber\\
   &\leq&
  C\|u-\tilde{u}\|_{L^q([0,T];L^p)}\|\theta\|_{L^q([0,T];L^p)}
  +C\|\tilde{u}\|_{L^q([0,T];L^p)}\|\theta-\tilde{\theta}\|_{L^q([0,T];L^p)}.
\end{eqnarray}
Because $u$ and $\tilde{u}$ are Riesz transforms of $\theta$ and
$\tilde{\theta}$, respectively, the classical Calderon-Zygmund
singular integral estimates imply that
\begin{equation}\label{a9a}
 \|u\|_{L^q([0,T];L^p)}\leq C\|\theta\|_{L^q([0,T];L^p)},
\end{equation}
and
\begin{equation}\label{b9a}
 \|\tilde{u}\|_{L^q([0,T];L^p)}\leq
 C\|\tilde{\theta}\|_{L^q([0,T];L^p)}.
\end{equation}
Substituting  inequalities (\ref{a9a}) and (\ref{b9a}) into
(\ref{33}), we get
 \begin{eqnarray}
 % \nonumber to remove numbering (before each equation)
  \|F(\theta)-F(\tilde{\theta})\|_{L^q([0,T];L^p)}&\leq& C(\|\theta\|_{L^q([0,T];L^p)}
  +\|\tilde{\theta}\|_{L^q([0,T];L^p)})\|\theta-\tilde{\theta}\|_{L^q([0,T];L^p)} \nonumber\\
    &\leq&
    CM\|\theta-\tilde{\theta}\|_{L^q([0,T];L^p)}.\label{the3}
 \end{eqnarray}
Hence, using \eqref{the3} and letting $M$ to be small enough, we
have
 \begin{eqnarray}
 % \nonumber to remove numbering (before each equation)
 \|F(\theta)\|_{L^q([0,T];L^p)}&=& \|F(\theta)-F(0)+F(0)\|_{L^q([0,T];L^p)}  \nonumber\\
    &\leq& \|F(\theta)-F(0)\|_{L^q([0,T];L^p)}+\|F(0)\|_ {L^q([0,T];L^p)} \nonumber\\
  &\leq& CM\|\theta\|_{L^q([0,T];L^p)}+\frac{M}{3}\nonumber\\
  &\leq&CM^2+\frac{M}{3}\nonumber\\
  &\leq&M\nonumber
 \end{eqnarray}
and
\begin{eqnarray}
    \|F(\theta)-F(\tilde{\theta})\|_{L^q([0,T];L^p)}&\leq&\frac{1}{2}
    (\|\theta-\tilde{\theta}\|_{L^q([0,T];L^p)}.\nonumber
\end{eqnarray}
By the contracting mapping principle, there exists a unique function
$\theta\in X$ such that $F(\theta)=\theta$ and, hence, there exists
a time $T>0$ such that the system \eqref{1.1} has a solution
$\theta\in L^q([0,T];L^p)$.

For $l>0$, define the space $X=\{\theta\in L^q([0,T];W^{l,p}):
\|\theta\|_X\le M<\infty\}$ with the norm
$\|\cdot\|_X=\|\cdot\|_{L^q([0,T];W^{l,p})}$. Similar to the proof
of the case $l=0$, we can obtain the local existence of the smooth
solution to the system \eqref{1.1}.

This ends the proof of Theorem \ref{thm1.2}.

%%%%%%%%%%%%%%%%%%%%%%%%%%%%%%%%%%%%%%%%%%%%%%%%%%%%%%%%%%%第二部分
\section{Global existence of strong  and smooth solution: proofs of Theorems \ref{thm1.3} and \ref{thm1.4} } \label{sec:2}
In this section, we will prove the global existence of strong or
smooth solution to the system \eqref{1.1} for the sub-critical and
critical cases $1\le \alpha\le 2$ by the careful energy methods.

{\bf Proof of Theorem \ref{thm1.3}}: If we assume that $s>\frac
N2+1$, then the local existence can be guaranteed by Theorem
\ref{thm1.1}. For general $s>0$, we can prove the local existence of
the strong or smooth solution by the fixed point theory as in the
proof of Theorem \ref{thm1.2}. To prove the global existence, it
suffices to establish the a priori estimates globally in time. This
is divided into the following three steps.

{\bf Step 1: $L^p$-estimate and Maximum principle}

When $\alpha=2$, the result is obvious. We only need consider the
case $1<\alpha<2$.

We notice the fact that, if $\theta_0(x)\geq 0$, then
$\theta(x,t)\geq 0$ in $\overline {\Omega_T}=\mathbb{R}^N\times
(0,T])$.

Multiplying both sides of equation (\ref{1.1})$_1$ by $\theta^p(x,
t)$ and integrating the resulting equation in $\mathbb{R}^N$, one
get
\begin{eqnarray}\nonumber
% \nonumber to remove numbering (before each equation)
    \frac{1}{p+1}\frac{d}{dt}\int_{\mathbb{R}^N}\theta^{p+1}dx&=&\int_{\mathbb{R}^N}-div(R(\theta)\theta)\theta^pdx
   -\nu\int_{\mathbb{R}^N}(-\Delta)^{\frac{\alpha}{2}}\theta\cdot\theta^pdx\nonumber\\
    &=&\int_{\mathbb{R}^N} R(\theta)\theta\cdot p\theta^{p-1}
    \nabla \theta dx-\nu\int_{\mathbb{R}^N}\Lambda^{\alpha}\theta\cdot\theta^pdx \nonumber \\
   &=& \frac{p}{p+1}\int_{\mathbb{R}^N} R(\theta)\nabla \theta^{p+1}dx
   -\nu\int _{\mathbb{R}^N}\theta^p\Lambda^{\alpha}\theta dx \nonumber \\
  &=& -\frac{p}{p+1}\int_{\mathbb{R}^N} \theta^{p+1}\Lambda \theta dx
  -\nu\int_{\mathbb{R}^N}\theta^p\Lambda^{\alpha}\theta dx\le 0
  \nonumber
\end{eqnarray} with the aid of \eqref{polem} in Lemma \ref{le003}. Hence, we have
 \begin{equation}
\int_{\mathbb{R}^N}\theta^{p+1}(x)dx\leq
\int_{\mathbb{R}^N}\theta_0^{p+1}(x)dx, \quad\forall p>1,
\nonumber
 \end{equation}
i.e.,
 \begin{equation}\label{lp1}
\|\theta\|_{L^p}\leq \|\theta_0\|_{L^p}.\nonumber
 \end{equation}
 In particular, if we take $p\rightarrow \infty$, we get
 \begin{equation}\label{lp2}
\|\theta\|_{L^\infty}\leq \|\theta_0\|_{L^\infty}.\nonumber
 \end{equation}
{\bf Step 2 : A priori estimate in $H^s$ for the case $s=2$}

Multiplying both sides of equation $(\ref{1.1})_1$ by
$\Lambda^4\theta$ and taking the inner product with the resulting
equation in $L^2$, we have
\begin{eqnarray}\label{20}
\frac{1}{2}\frac{d}{dt}\|\Lambda^2\theta\|_{L^2}^2&=&-\int_{\mathbb{R}^N}\Lambda^{2+\frac{\alpha}{2}}
\theta\Lambda^{2-\frac{\alpha}{2}}
div(u\theta)dx-\nu\|\Lambda^{2+\frac{\alpha}{2}}\theta\|_{L^2}^2 \nonumber\\
&\leq&\|\Lambda^{2+\frac{\alpha}{2}}\theta\|_{L^2}\|\Lambda^{2+1-\frac{\alpha}{2}}(u\theta)\|_{L^2}-\nu\|\Lambda^{2+\frac{\alpha}{2}}\theta\|_{L^2}^2,
\end{eqnarray}
where we have used the H$\ddot{o}$lder inequality and the calculus
inequality
$\|\Lambda^{2-\frac{\alpha}{2}}div(u\theta)\|_{L^2}\le\|\Lambda^{2+1-\frac{\alpha}{2}}
(u\theta)\|_{L^2}$.

Using the inequalities for the Calderon-Zygmund type singular
integrals on $u=\mathcal {R}\theta$, we have
\begin{equation}\label{22}
    \|u\|_{L^p}\leq C\|\theta\|_{L^p},\quad
    \|\Lambda^{3-\frac{\alpha}{2}}u\|_{L^q}\leq
    C\|\Lambda^{3-\frac{\alpha}{2}}\theta\|_{L^q},\quad  1<p,   q<+\infty.
\end{equation}
By using \eqref{lam1} in Lemma \ref{le2.2} and (\ref{22}), we have
 \begin{eqnarray}\label{21}
 % \nonumber to remove numbering (before each equation)
   \|\Lambda^{2+1-\frac{\alpha}{2}}(u\theta)\|_{L^2}&\leq& C(\|u\|_{L^p}\|\Lambda^{3-\frac{\alpha}{2}}
   \theta\|_{L^q}+\|\theta\|_{L^p}\|\Lambda^{3-\frac{\alpha}{2}}u\|_{L^q}) \nonumber\\
    &\leq&C(\|\theta\|_{L^p}\|\Lambda^{3-\frac{\alpha}{2}}
   \theta\|_{L^q}+\|\theta\|_{L^p}\|\Lambda^{3-\frac{\alpha}{2}}\theta\|_{L^q}) \nonumber \\
  &\leq&
  C\|\theta\|_{L^p}\|\Lambda^{3-\frac{\alpha}{2}}\theta\|_{L^q},
 \end{eqnarray}
where $\frac{1}{p}+\frac{1}{q}=\frac{1}{2}, p,q>2$.

Putting (\ref{21}) into (\ref{20}), we have
\begin{eqnarray}\label{23}
% \nonumber to remove numbering (before each equation)
  \frac{1}{2}\frac{d}{dt}\|\Lambda^2\theta\|_{L^2}^2 \leq
  C\|\theta\|_{L^p}\|\Lambda^{2+\frac{\alpha}{2}}\theta\|_{L^2}
  \|\Lambda^{3-\frac{\alpha}{2}}\theta\|_{L^q}-\nu\|\Lambda^{2+\frac{\alpha}{2}}\theta\|_{L^2}^2.
\end{eqnarray}
Using the Lemma \ref{le2.3}, we have
\begin{equation}\label{24}
\|\Lambda^{3-\frac{\alpha}{2}}\theta\|_{L^q}\leq
C\|\Lambda^{3-\frac{\alpha}{2}+\delta}\theta\|_{L^2},
\end{equation}
where $\frac{1}{q}=\frac{1}{2}-\frac{\delta}{N}$.

Now we take $\delta=\frac{N}{p}, p>\frac{N}{\alpha-1}\geq 2
(1<\alpha\le 2)$, and therefore $1+\delta<\alpha$. Then apply the
fractional type Gagliardo-Nirenberg inequality
\begin{equation}\label{gn}
\|\Lambda^{3-\frac{\alpha}{2}+\delta}\theta\|_{L^2}\leq
\|\Lambda^{2+\frac{\alpha}{2}}\theta\|_{L^2}^a\|\Lambda^2\theta\|_{L^2}^{1-a}
\end{equation}
with the parameter $a=\frac{2-\alpha+2\delta}\alpha<1$.

Putting (\ref{23}), (\ref{24}) and (\ref{gn}) together,  and using
the Young's inequality, we obtain
\begin{eqnarray}
  \frac{1}{2} \frac{d}{dt}\|\Lambda^2\theta\|_{L^2}^2&\leq& C \|\theta\|_{L^p}\|\Lambda^{2
  +\frac{\alpha}{2}}\theta\|_{L^2}^{a+1}\|\Lambda^2\theta\|_{L^2}^{1-a}
  -\nu\|\Lambda^{2+\frac{\alpha}{2}}\theta\|_{L^2}^2\nonumber \\
  &\leq& C \|\theta\|_{L^p}^{\frac2{1-a}}\|\Lambda^{2}\theta\|_{L^2}^2
  -\frac{\nu}2\|\Lambda^{2+\frac{\alpha}{2}}\theta\|_{L^2}^2,\nonumber
\end{eqnarray} which, together with $L^p$ estimate of $\theta$ in Step 1,
gives
\begin{eqnarray}
% \nonumber to remove numbering (before each equation)
 \frac{1}{2}\frac{d}{dt}\|\Lambda^2\theta\|_{L^2}^2\le C(\nu,
 \|\theta_0\|_{L^p})\|\Lambda^2\theta\|_{L^2}^2,\nonumber
\end{eqnarray}
which gives
\begin{equation}\label{28}
\|\Lambda^2\theta\|_{L^2}(t)\leq \|\Lambda^2\theta_0\|_{L^2}e^{Ct}.
\end{equation}

{\bf Step 3: A priori estimate in $H^s$ for the case $s>0$}

Multiplying both sides of equation $(\ref{1.1})_1$ by
$\Lambda^{2s}\theta$ and taking the inner product with the
resulting equation in $L^2$, we have
\begin{eqnarray}\label{20s}
\frac{1}{2}\frac{d}{dt}\|\Lambda^s\theta\|_{L^2}^2&=&-\int_{\mathbb{R}^N}\Lambda^{s+\frac{\alpha}{2}}
\theta\Lambda^{s-\frac{\alpha}{2}}
div(u\theta)dx-\nu\|\Lambda^{s+\frac{\alpha}{2}}\theta\|_{L^2}^2 \nonumber\\
&\leq&\|\Lambda^{s+\frac{\alpha}{2}}\theta\|_{L^2}\|\Lambda^{s+1-\frac{\alpha}{2}}(u\theta)\|_{L^2}
-\nu\|\Lambda^{s+\frac{\alpha}{2}}\theta\|_{L^2}^2,
\end{eqnarray}
where we have used the H\"{o}lder inequality and the calculus
inequality
$\|\Lambda^{s-\frac{\alpha}{2}}div(u\theta)\|_{L^2}\le\|\Lambda^{s+1-\frac{\alpha}{2}}
(u\theta)\|_{L^2}$.

Using the inequalities for the Calderon-Zygmund type singular
integrals $\mathcal {R}$
\begin{equation}
    \|u\|_{L^p}\leq C\|\theta\|_{L^p},\quad
    \|\Lambda^{s+1-\frac{\alpha}{2}}u\|_{L^q}\leq
    C\|\Lambda^{s+1-\frac{\alpha}{2}}\theta\|_{L^q}, 1<p, q<\infty
    \nonumber
\end{equation}
 and Lemma \ref{le2.2}, we have
 \begin{eqnarray}\label{21s}
 % \nonumber to remove numbering (before each equation)
   \|\Lambda^{s+1-\frac{\alpha}{2}}(u\theta)\|_{L^2}&\leq& C(\|u\|_{L^p}\|\Lambda^{s+1-\frac{\alpha}{2}}
   \theta\|_{L^q}+\|\theta\|_{L^p}\|\Lambda^{s+1-\frac{\alpha}{2}}u\|_{L^q}) \nonumber\\
    &\leq&C(\|\theta\|_{L^p}\|\Lambda^{s+1-\frac{\alpha}{2}}
   \theta\|_{L^q}+\|\theta\|_{L^p}\|\Lambda^{s+1-\frac{\alpha}{2}}\theta\|_{L^q}) \nonumber \\
  &\leq&
  C(\|\theta\|_{L^p}\|\Lambda^{s+1-\frac{\alpha}{2}}\theta\|_{L^q}),
 \end{eqnarray}
where $\frac{1}{p}+\frac{1}{q}=\frac{1}{2}, p,q>2$.

Putting (\ref{21s}) into (\ref{20s}), we have
\begin{eqnarray}\label{23s}
% \nonumber to remove numbering (before each equation)
  \frac{1}{2}\frac{d}{dt}\|\Lambda^s\theta\|_{L^2}^2 \leq
  C\|\theta\|_{L^p}\|\Lambda^{s+\frac{\alpha}{2}}\theta\|_{L^2}
  \|\Lambda^{s+1-\frac{\alpha}{2}}\theta\|_{L^q}-\nu\|\Lambda^{s+\frac{\alpha}{2}}\theta\|_{L^2}^2.
\end{eqnarray}
Using the Lemma \ref{le2.3}, we have
\begin{equation}\label{24s}
\|\Lambda^{s+1-\frac{\alpha}{2}}\theta\|_{L^q}\leq
C\|\Lambda^{s+1-\frac{\alpha}{2}+\delta}\theta\|_{L^2},
\end{equation}
where $\frac{1}{q}=\frac{1}{2}-\frac{\delta}{N}$.

Now we take $\delta=\frac{N}{p}, p>\frac{N}{\alpha-1}\geq 2
(1<\alpha\le 2)$, and therefore $1+\delta<\alpha$. Then apply the
fractional type Gagliardo-Nirenberg inequality
\begin{equation}\label{25}
\|\Lambda^{s+1-\frac{\alpha}{2}+\delta}\theta\|_{L^2}\leq
\|\Lambda^{s+\frac{\alpha}{2}}\theta\|_{L^2}^a\|\Lambda^s\theta\|_{L^2}^{1-a}
\end{equation}
with the parameter $a=\frac{2-\alpha+2\delta}\alpha<1$.

Putting (\ref{23s}), (\ref{24s}) and (\ref{25}) together, and
using the Young's inequality, we obtain
\begin{eqnarray}\label{26s}
% \nonumber to remove numbering (before each equation)
  \frac{1}{2} \frac{d}{dt}\|\Lambda^s\theta\|_{L^2}^2&\leq& C
  \|\theta\|_{L^p}\|\Lambda^{s
  +\frac{\alpha}{2}}\theta\|_{L^2}^{a+1}\|\Lambda^s\theta\|_{L^2}^{1-a}
  -\nu\|\Lambda^{s+\frac{\alpha}{2}}\theta\|_{L^2}^2\nonumber \\
  &\leq& C \|\theta\|_{L^p}^{\frac2{1-a}}\|\Lambda^{s}\theta\|_{L^2}^2
  -\frac{\nu}2\|\Lambda^{s+\frac{\alpha}{2}}\theta\|_{L^2}^2,\nonumber
\end{eqnarray} which, together with $L^p$ estimate on $\theta$ in Step 1,
gives
\begin{eqnarray}
% \nonumber to remove numbering (before each equation)
 \frac{1}{2}\frac{d}{dt}\|\Lambda^s\theta\|_{L^2}^2\le C(\nu,
 \|\theta_0\|_{L^p})\|\Lambda^s\theta\|_{L^2}^2,\nonumber
\end{eqnarray}
which gives
\begin{equation}\label{28s}
\|\Lambda^s\theta\|_{L^2}(t)\leq
\|\Lambda^s\theta_0\|_{L^2}e^{Ct}.
\end{equation}

Using the a priori estimates (\ref{28}), (\ref{28s}) and the
standard extension argument we can conclude the global existence
result. The proof of Theorem \ref{thm1.3} is complete.

{\bf Proof of Theorem \ref{thm1.4}} The proof is analogous to the
critial dissipative quasi-geostrophic equation that is shown in
\cite{caffarelli}. We give two key points of the proof. First, we
have
\begin{eqnarray}
% \nonumber to remove numbering (before each equation)
 &&\int_{\mathbb{R}^N}\theta_\lambda^2 (t_2, x)dx
 +2\int_{t_1}^{t_2}\int_{\mathbb{R}^N}|\Lambda^\frac{1}{2}\theta_\lambda|^2dxdt
 +2\int_{t_1}^{t_2}\int_{\mathbb{R}^N}div (u\theta_\lambda)\theta_\lambda
 dxdt\nonumber\\
 &\leq&\int_{\mathbb{R}^N} \theta_\lambda^2(t_1,x)dx,
 0<t_1<t_2.\label{m1}
\end{eqnarray} Next, we
only need to show the term $
\int_{t_1}^{t_2}\int_{\mathbb{R}^N}div(u\theta_\lambda)\theta_\lambda
dx dt $ is positive. In fact, by the direct calculation, we have
\begin{eqnarray}\label{m2}
 \int_{t_1}^{t_2}\int_{\mathbb{R}^N}div(u\theta_\lambda)\theta_\lambda dx
 dt&=&\int_{t_1}^{t_2}\int_{\mathbb{R}^N}div
 u\cdot\theta_\lambda^2dxdt+\int_{t_1}^{t_2}\int_{\mathbb{R}^N}u\cdot\nabla
 \theta_\lambda\cdot\theta_\lambda dxdt\nonumber\\
 &=&\int_{t_1}^{t_2}\int_{\mathbb{R}^N}div
 u\cdot\theta_\lambda^2dxdt-\int_{t_1}^{t_2}\int_{\mathbb{R}^N}divu\cdot
\frac{|\theta_\lambda|^2}{2}dxdt\nonumber\\
&=&\frac{1}{2}\int_{t_1}^{t_2}\int_{\mathbb{R}^N}\Lambda
 \theta_\lambda\cdot\theta_\lambda^2 dxdt\geq 0.
\end{eqnarray}
Here we have used the relationship $u=\mathcal {R} \theta$ and $div
u=div\mathcal {R} \theta=\Lambda \theta$. Combining \eqref{m1} and
\eqref{m2}, we obtain \eqref{mm0}. Then utilize the same strategy as
\cite{caffarelli,19} to finish the proof of Theorem \ref{thm1.4}.
%%%%%%%%%%%%%%%%%%%%%%%%%%%%%%%%%%%%%%%%%%%%%%%%%%%%%%%%%%%%%%%%%%%%%%%%%%%%%%%%%%%%%%%%%%%%%%%%%%%%第四部分
\section{Global existence of the weak solution: proofs of Theorems \ref{thm1.5} and \ref{thm1.6}}
\label{sec:4} In this section, we will prove Theorems \ref{thm1.5}
and \ref{thm1.6} by employing the vanishing viscosity method used in
\cite{14, castro}. We consider the general case $0<\alpha\le 2$.

{\bf Definition 4.1.} A solution $\theta(x, t)$ is called the weak
solution to system (\ref{1.1}), if for any smooth function $\phi\in
C_0^\infty([0, \tau]\times \mathbb{R}^N)$, it satisfies
\begin{eqnarray*}\label{004}
&&\int_{\mathbb{R}^N}\theta(x,t)\phi(x,t)dx-\int_{\mathbb{R}^N}\theta_0(x)\phi(x,0)dx
+\int_0^\tau\int_{\mathbb{R}^N}[-\theta(x,t)\partial_t\phi(x,t)\\
&&-u\theta\cdot\nabla\phi(x,t)+\nu\theta(x,t)\Lambda^\alpha\phi(x,t)]dxdt=0,
\end{eqnarray*}
where the velocity $u=\mathcal {R}\theta$.

Let $\varepsilon>0$ be a small parameter and we will approximate
problem (\ref{1.1})  by considering the regularized system of
(\ref{1.1}) with a small viscosity term
\begin{eqnarray}\label{008}
\left\{\begin{array}{lll} \frac{\partial\theta_\varepsilon}{\partial
t}+div(u_\varepsilon\theta_\varepsilon)
+\nu\Lambda^\alpha\theta_\varepsilon=\varepsilon\Delta\theta_\varepsilon,\\
u_\varepsilon=\mathcal {R}\theta_\varepsilon,\\
\theta_\varepsilon(x,0)=\theta_0^\varepsilon.\end{array}\right.
\end{eqnarray}
for $0<\varepsilon\leq 1,
\theta_0^\varepsilon=\psi_\varepsilon\ast\theta_0,
 \psi_\varepsilon(x)=\varepsilon^{-N}\psi(\frac{x}{\varepsilon})$ and $\psi$  satisfying
$$\psi\geq 0, \quad \psi   \in   C_0^\infty(\mathbb{R}^N) \quad \quad
and \quad \quad
 \|\psi\|_{L^1}=1.$$
For any fixed $\varepsilon>0$, by the standard parabolic theory, as
in the proof of Theorem \ref{thm1.1}, we can prove the following
global existence results on the smooth solution to the regularized
system \eqref{008}.
\begin{prop}\label{tha2.4}
For any $\varepsilon>0$ and for any $\tau>0$, there exists a unique
solution $\theta_\varepsilon$ of (\ref{008}) satisfying
$\theta_\varepsilon\in C([0,\tau];$
$H^s(\mathbb{R}^N))(s>\frac{N}{2}+1)$. Moreover, if $\theta_0\ge 0$,
then $\theta^\varepsilon(x, t)\ge 0$ .
\end{prop}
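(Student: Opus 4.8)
The plan is to prove this by combining the local existence and positivity machinery already developed in the proof of Theorem~\ref{thm1.1} with global a priori bounds that are available precisely because the viscosity parameter $\varepsilon>0$ supplies a genuine second-order dissipation. The statement asserts three things: (a) for each fixed $\varepsilon>0$ and each $\tau>0$ there is a solution $\theta_\varepsilon\in C([0,\tau];H^s)$ with $s>\frac N2+1$; (b) uniqueness of that solution; and (c) preservation of nonnegativity, $\theta_0\ge 0\Rightarrow\theta_\varepsilon\ge 0$.

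First I would establish \emph{local} existence and uniqueness exactly as in Theorem~\ref{thm1.1}. The regularized system \eqref{008} is, up to relabelling $\epsilon\leftrightarrow\varepsilon$, identical to the regularization \eqref{reg1}--\eqref{reg3} used there; one rewrites it in the mild (Duhamel) form against the heat semigroup $e^{\varepsilon t\Delta}$ and runs the same fixed-point argument in $C([0,T^\varepsilon];H^s)$. This furnishes a unique local smooth solution on some maximal interval $[0,T^\varepsilon_{\max})$. The positivity claim (c) is obtained by the same minimum-principle argument recorded in the proof of Theorem~\ref{thm1.1}: at an interior spatial minimum $x_0$ one has the pointwise sign $\Lambda^\alpha\theta_\varepsilon(x_0)\le 0$ from the singular-integral representation \eqref{reiz}, while $\varepsilon\Delta\theta_\varepsilon(x_0)\ge 0$ and the transport terms vanish there, so the minimum of $\theta_\varepsilon$ cannot decrease below $\min\theta_0\ge 0$; hence $\theta_\varepsilon\ge 0$ throughout its interval of existence. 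Note that the smoothing $\theta_0^\varepsilon=\psi_\varepsilon\ast\theta_0$ keeps $\theta_0^\varepsilon\ge 0$ since $\psi\ge 0$, and keeps $\theta_0^\varepsilon\in H^s$ for every $s$.

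The crucial point is to upgrade local existence to \emph{global} existence, i.e.\ to show $T^\varepsilon_{\max}=\infty$. By the blow-up criterion from Theorem~\ref{thm1.1} it suffices to bound $\|\theta_\varepsilon(t)\|_{H^s}$ on every finite interval, uniformly in time (though \emph{not} necessarily uniformly in $\varepsilon$ — that uniformity is the separate business of the compactness argument in Theorem~\ref{thm1.5}). Here the extra viscosity is decisive. Multiplying \eqref{008} by $\Lambda^{2s}\theta_\varepsilon$ and integrating by parts, one reproduces the energy identity \eqref{est} but now with the genuinely coercive term $\varepsilon\|\Lambda^{s+1}\theta_\varepsilon\|_{L^2}^2$ on the left. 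The nonlinear terms $I_1,I_2$ are estimated exactly as in \eqref{esti1}--\eqref{esti2}, using the commutator inequality \eqref{lam2}, the Riesz bound $\|u_\varepsilon\|_{H^s}\le C\|\theta_\varepsilon\|_{H^s}$, and (for $I_2$) the pointwise positivity inequality \eqref{positive} together with $\theta_\varepsilon\ge 0$ from step one. The new feature is that the top-order factors in the nonlinear estimate can now be absorbed into the $\varepsilon\|\Lambda^{s+1}\theta_\varepsilon\|_{L^2}^2$ term by Young's inequality, leaving a differential inequality of the form
\begin{equation}
\frac{d}{dt}\|\Lambda^s\theta_\varepsilon\|_{L^2}^2
\le C_\varepsilon\bigl(1+\|\Lambda^s\theta_\varepsilon\|_{L^2}^2\bigr)\|\Lambda^s\theta_\varepsilon\|_{L^2}^2,\nonumber
\end{equation}
which, after combining with the $L^p$ and $L^\infty$ maximum-principle bounds $\|\theta_\varepsilon\|_{L^p}\le\|\theta_0^\varepsilon\|_{L^p}$ obtained as in Step~1 of Theorem~\ref{thm1.3}, closes to a Gronwall estimate keeping $\|\theta_\varepsilon(t)\|_{H^s}$ finite on $[0,\tau]$ for every $\tau$.

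The main obstacle is the balance of derivatives in the nonlinear absorption step: the convection term $\mathrm{div}(u_\varepsilon\theta_\varepsilon)$ formally costs one derivative more than the dissipation $\Lambda^\alpha$ can pay for $\alpha<2$, which is exactly why no $\varepsilon$-independent $H^s$ bound is claimed here. For fixed $\varepsilon>0$, however, the full Laplacian regularization $\varepsilon\Delta$ is of order $2$, strictly higher than the order-one loss from $\mathrm{div}(u_\varepsilon\theta_\varepsilon)$, so every dangerous term is controlled by $\varepsilon\|\Lambda^{s+1}\theta_\varepsilon\|_{L^2}^2$ at the cost of an $\varepsilon$-dependent constant $C_\varepsilon$. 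Once this is in place, uniqueness follows by the same $L^2$ energy difference estimate as in Theorem~\ref{thm1.1} (equations \eqref{bb}--\eqref{uni1}), with the added viscous term only helping, and the continuity-in-time statement $\theta_\varepsilon\in C([0,\tau];H^s)$ is read off from the equation and the uniform-in-time bounds in the standard way.
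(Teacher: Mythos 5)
Your overall scheme --- local existence for the regularized system by parabolic theory in mild form, nonnegativity via the minimum-principle argument at a spatial minimum, continuation to all of $[0,\tau]$, and uniqueness by the $L^2$ difference estimate --- is exactly what the paper intends: its entire proof of Proposition \ref{tha2.4} is the one sentence that the result follows ``by the standard parabolic theory, as in the proof of Theorem \ref{thm1.1}''. The positivity and uniqueness portions of your argument are fine.

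The gap is in the only non-routine step, the passage from local to global. Your displayed inequality
\[
\frac{d}{dt}\|\Lambda^s\theta_\varepsilon\|_{L^2}^2\le C_\varepsilon\bigl(1+\|\Lambda^s\theta_\varepsilon\|_{L^2}^2\bigr)\|\Lambda^s\theta_\varepsilon\|_{L^2}^2
\]
is superlinear in $y=\|\Lambda^s\theta_\varepsilon\|_{L^2}^2$: an inequality $y'\le C_\varepsilon y^2$ only controls $y$ on a time interval of length about $(C_\varepsilon y(0))^{-1}$, so it merely reproduces the local result, and no Gronwall argument applied to it can reach an arbitrary $\tau$. The maximum-principle bounds cannot be ``combined with it'' after the fact, because they bound $\|\theta_\varepsilon\|_{L^p}$, not $y$; they must enter the \emph{coefficients} of the energy estimate. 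Note also two related problems: estimating $I_1,I_2$ ``exactly as in \eqref{esti1}--\eqref{esti2}'' leaves coefficients $\|\Lambda\theta_\varepsilon\|_{L^\infty}+\|\nabla\theta_\varepsilon\|_{L^\infty}+\|\nabla u_\varepsilon\|_{L^\infty}$, which the maximum principle does not control; and $\|u_\varepsilon\|_{L^\infty}\le C\|\theta_\varepsilon\|_{L^\infty}$ is false in general, since the Riesz transform is not bounded on $L^\infty$, so the $L^\infty$ bound must be replaced by $L^p$ bounds with finite $p$. The repair is to run Steps 2--3 of the proof of Theorem \ref{thm1.3} with $\varepsilon\Delta$ in the role of the dissipation (order $2$, so the constraint $p>\frac{N}{\alpha-1}$ there becomes the harmless $p>N$): write
\[
\Bigl|\int_{\mathbb{R}^N}\Lambda^s\mathrm{div}(u_\varepsilon\theta_\varepsilon)\,\Lambda^s\theta_\varepsilon\,dx\Bigr|\le\|\Lambda^s(u_\varepsilon\theta_\varepsilon)\|_{L^2}\|\Lambda^{s+1}\theta_\varepsilon\|_{L^2},
\]
apply \eqref{lam1} with finite $p>\max\{N,2\}$ and $\frac1p+\frac1q=\frac12$, use $\|u_\varepsilon\|_{L^p}+\|\theta_\varepsilon\|_{L^p}\le C(\theta_0^\varepsilon)$ globally in time (maximum principle, valid since $\theta_\varepsilon\ge 0$; note $\theta_0^\varepsilon=\psi_\varepsilon\ast\theta_0\in L^2\cap L^\infty$ so all these norms are finite), then $\|\Lambda^s\theta_\varepsilon\|_{L^q}\le C\|\Lambda^{s+\delta}\theta_\varepsilon\|_{L^2}$ with $\delta=\frac Np<1$ by Lemma \ref{le2.3}, interpolate $\|\Lambda^{s+\delta}\theta_\varepsilon\|_{L^2}\le\|\Lambda^{s}\theta_\varepsilon\|_{L^2}^{1-\delta}\|\Lambda^{s+1}\theta_\varepsilon\|_{L^2}^{\delta}$, and absorb with Young's inequality into $\varepsilon\|\Lambda^{s+1}\theta_\varepsilon\|_{L^2}^2$. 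This yields the \emph{linear} inequality $\frac{d}{dt}\|\Lambda^s\theta_\varepsilon\|_{L^2}^2\le C(\varepsilon,\theta_0)\|\Lambda^s\theta_\varepsilon\|_{L^2}^2$, and Gronwall then gives the $H^s$ bound on every $[0,\tau]$, hence global existence.
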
 We want to establish the a priori estimates for
$\theta^\varepsilon$ with respect to $\varepsilon$, and then to
perform the limit $\lim_{\varepsilon\to 0}\theta^\varepsilon=\theta$
in the sense of weak convergence, and to verify that the limit
function $\theta$ is a weak solution of the system \eqref{1.1} in
the sense of Definition 4.1.

We multiply both sides of equations $(\ref{008})_1$ by
$\theta_\varepsilon$ to get
\begin{equation}\label{0011}
\frac{1}{2}\frac{d}{dt}\|\theta_\varepsilon\|_{L^2}^2+\nu\|\Lambda^{\frac{\alpha}{2}}\theta_\varepsilon\|_{L^2}^2
+\varepsilon\|\nabla\theta_\varepsilon\|_{L^2}^2\leq
\int_{\mathbb{R}^N}\mathcal
{R}(\theta_\varepsilon)\theta_\varepsilon\cdot\nabla
\theta_\varepsilon
dx=-\frac{1}{2}\int_{\mathbb{R}^N}\theta_\varepsilon^2\Lambda\theta_\varepsilon\leq
0,
\end{equation}
where we have used \eqref{polem} in Lemma \ref{le003} and $\theta\ge
0$.

Then we integrate (\ref{0011}) in time to get
\begin{eqnarray}\label{0012}
% \nonumber to remove numbering (before each equation)
   \| \theta_\varepsilon (\tau)\|_{L^2}^2+2\nu\int_0^\tau\|\Lambda^{\frac{\alpha}{2}}\theta_\varepsilon(s)\|_{L^2}^2ds\leq
   \|\theta_0\|_{L^2}^2, \quad \forall \tau.
\end{eqnarray}
In particular, we obtain
\begin{equation}\label{0013}
   \theta_\varepsilon\in C([0,\tau]; {L^2}({\mathbb{R}^N})), \quad
   \sup_{0\le t\le\tau}\|\theta_\varepsilon\|_{L^2(\mathbb{R}^N)}\le
    \|\theta_0\|_{L^2(\mathbb{R}^N)},
   and\quad \max\limits_{0\leq t\leq
   \tau}\|\theta_\varepsilon(t)\|_{L^2}^2\leq \|\theta_0\|_{L^2}^2.
\end{equation} Using $u^\varepsilon=\mathcal{R}^\varepsilon$ and
$L^2$ boundedness of the Riesz transform, one get
\begin{equation}\label{u100}
\|u_\varepsilon\|_{L^2(\mathbb{R}^N)}\le
C\|\theta_\varepsilon\|_{L^2(\mathbb{R}^N)}\le
    C\|\theta_0\|_{L^2(\mathbb{R}^N)}.
\end{equation} Next we pass to the limit $\varepsilon\rightarrow 0$ in (\ref{008})
by using the Aubin-Lions compactness lemma.

First of all, by the previous a priori estimate as in  (\ref{0013}),
we obtain $\theta_\varepsilon\in C([0,\tau]; L^2(\mathbb{R}^N))$ and
\begin{eqnarray}\max\{\|\theta_\varepsilon(t)\|_{L^2}:0\le t\le\tau\}\le M<\infty. \label{comp1}\end{eqnarray}
Secondly, we want to prove that, for any $\phi\in
C_0^\infty(\mathcal{R}^N)$, $\{\phi\theta_\varepsilon\}$ is
uniformly Lipschitz in the interval of time $[0, \tau]$ with respect
to the space $H^{-p}$ with $p>\frac N2+2$, i.e.
\begin{equation}\label{comp2}
 \|\phi\theta_\varepsilon(t_2)-\phi\theta_\varepsilon(t_1)\|_{H^{-p}}\le
 C|t_2-t_1|, \quad 0\le t_1, t_2\le \tau
\end{equation} for some positive constant $C>0$.

Because $\theta_\varepsilon$ is a strong solution of (\ref{008}) and
is continuous, it follows that
\begin{equation}\label{0015}
 \|\phi\theta_\varepsilon(t_2)-\phi\theta_\varepsilon(t_1)\|_{H^{-p}}=\|\int_{t_1}^{t_2}\phi\frac{d}{dt}\theta_\varepsilon dt\|_{H^{-p}}\leq \max\limits_{t_1\leq t\leq t_2}\{M(t)\}(t_2-t_1),
\end{equation}
where
$$M(t)=\|div(\phi \mathcal {R}(\theta_\varepsilon)\theta_\varepsilon)\|_{H^{-p}}+\|\nabla\phi\mathcal {R}(\theta_\varepsilon)\theta_\varepsilon\|_{H^{-p}}
+\nu\|\phi\Lambda^\alpha
\theta_\varepsilon\|_{H^{-p}}+\varepsilon\|\phi
\Delta\theta_\varepsilon\|_{H^{-p}}. $$ Using the Sobolev's
imbedding theorem and using \eqref{0013} and \eqref{100}, we have
 \begin{eqnarray}\label{0016}
% \nonumber to remove numbering (before each equation)
    \|\nabla\phi\mathcal {R}(\theta_\varepsilon)\theta_\varepsilon\|_{H^{-p}}
 &\leq& C(p)\|\widehat{\nabla\phi\mathcal {R}(\theta_\varepsilon)}\theta_\varepsilon\|_{L^\infty}\nonumber\\
  &\leq& C(p)\|{\nabla\phi}\|_{L^\infty}\|{\mathcal {R}(\theta_\varepsilon)}\theta_{\varepsilon}\|_{L^{1}} \nonumber\\
  &\leq&C(p, \phi)\|\theta_{\varepsilon}\|_{L^2}^2\nonumber
 \\ &\leq& C(p,\phi)\|\theta_0\|_{L^2}^2.
\end{eqnarray}
Similarly, we have
\begin{eqnarray}\label{0017}
% \nonumber to remove numbering (before each equation)
  \|div ((\phi \mathcal {R}(\theta_\varepsilon)\theta_\varepsilon))\|_{H^{-p}}
   &\leq& \|\phi \mathcal {R}(\theta_\varepsilon)\theta_\varepsilon\|_{H^{1-p}} \nonumber\\
   &\leq& C(\phi, p)\|\theta_\varepsilon\|_{L^2}^2\nonumber\\
   &\leq& C(\phi, p)\|\theta_0\|_{L^2}^2.
\end{eqnarray}
Applying the convolution property of the Fourier transforms, we have
\begin{eqnarray}
% \nonumber to remove numbering (before each equation)
  |\int_{\mathbb{R}^N} \widehat{\phi(y)} |\xi-y|^\alpha\widehat{\theta_\varepsilon}(\xi-y)dy|
  &\leq& C\int_{\mathbb{R}^N}
  (|\xi|^\alpha+|y|^\alpha)|\hat{\phi}(y)||\widehat{\theta_\varepsilon}(\xi-y)|dy\nonumber\\
   &\leq&C(1+|\xi|^\alpha)\|\phi\|_{H^\alpha}\|\theta_\varepsilon(0)\|_{L^2},\nonumber
\end{eqnarray}
which gives
\begin{eqnarray}\label{0019}
% \nonumber to remove numbering (before each equation)
  \|\phi\Lambda^\alpha\theta_\varepsilon\|_{H^{-p}} &\leq& C(\phi)\|\theta_\varepsilon\|_{L^2}
  (\int_{\mathbb{R}^N}\frac{(1+|\xi|^\alpha)^2}{(1+|\xi|^2)^p}d\xi)^{\frac{1}{2}}\nonumber\\
   &\leq& C(p,\phi)\|\theta_\varepsilon\|_{L^2}\le
   C(p,\phi)\|\theta_0\|_{L^2}.
\end{eqnarray}
Similarly, we obtain
\begin{equation}\label{0020}
    \|\phi\Delta \theta_\varepsilon\|_{H^{-p}}\leq C(p,\phi)\|\theta_0\|_{L^2}.
\end{equation}
Putting \eqref{0015} together with \eqref{0016}-\eqref{0020}, we
obtain \eqref{comp2}.

From \eqref{comp1}-\eqref{comp2}, conditions (i) and (ii) of the
Aubin-Lions lemma \cite{castro} are satisfied. Therefore, there
exists a subsequence and a function $\theta\in C([0,\tau];
L^2(\mathbb{R}^N))$ such that
\begin{equation}\label{0025}
\theta_\varepsilon\rightharpoonup \theta \quad in \quad
L^2(\mathbb{R}^N)\quad a.e. \text{ t and }   \max\limits_{0\leq
t\leq
\tau}\|\phi\theta_\varepsilon(t)-\phi\theta(t)\|_{H^{-p}}\rightarrow
0.
\end{equation}
We take the limit in the weak formulation of the problem (\ref{008})
\begin{eqnarray}
  &&\int_{\mathbb{R}^N}\theta_\varepsilon(x,\tau)\phi dx
  -\int_{\mathbb{R}^N}\theta_\varepsilon(x,0)\phi(x,0)dx+
  \int_0^\tau\int_{\mathbb{R}^N}[-\theta_\varepsilon(x,t)\partial_t\phi(x,t)-u_\varepsilon
  \theta_\varepsilon\cdot\nabla\phi(x,t)\nonumber\\
  &&+\nu\theta_\varepsilon(x,t)\Lambda^\alpha\phi(x,t)-\varepsilon\theta_\varepsilon(x,t)\Delta\phi(x,t)]
  dxdt=0,\nonumber
\end{eqnarray}
and let $\varepsilon\rightarrow 0$, we get
\begin{eqnarray}\label{0022}
  &&\int_{\mathbb{R}^N}\theta(x,\tau)\phi(x,t)dx-\int_{\mathbb{R}^N}\theta_0(x,t)\phi(x,0)
  +\int_0^\tau\int_{\mathbb{R}^N}
 [\theta(x,t)\partial_t\phi(x,t)+\nu\theta(x,t)\Lambda^\alpha\phi(x,t)]dx
 dt\nonumber\\
  &&+\lim\limits_{\varepsilon\rightarrow 0}\int_0^\tau\int_{\mathbb{R}^N}\theta_\varepsilon u_\varepsilon\nabla\phi(x,t)dxdt=0,
\end{eqnarray}
Now we rewrite the last term in the left hand side of \eqref{0022}
as follows:
\begin{eqnarray}
% \nonumber to remove numbering (before each equation)
  &&\int_0^\tau\int_{\mathbb{R}^N}\theta_\varepsilon u_\varepsilon\cdot\nabla\phi(x,t)dx dt\nonumber\\
  &\leq&   \int_0^\tau\int_{\mathbb{R}^N}(\theta_\varepsilon-\theta)u_\varepsilon\cdot\nabla \phi dx dt+\int_0^\tau
  \int_{\mathbb{R}^N}\theta u_\varepsilon\cdot\nabla\phi dx dt\nonumber\\
  &\equiv& I_1+I_2.\label{0024}
\end{eqnarray}
The first term $I_1$ can be estimated by
\begin{eqnarray}
% \nonumber to remove numbering (before each equation)
  |I_1|&=&|\int_0^\tau\int_{\mathbb{R}^N}(\theta_\varepsilon-\theta)u_\varepsilon\cdot\nabla \phi dx dt|\nonumber\\
  &\leq& \int_0^\tau\|u_\varepsilon\|_{H^{\frac{\alpha}{2}}}\|(\theta_\varepsilon-\theta)\nabla \phi\|_{H^{-\frac{\alpha}{2}}}dt \nonumber\\
   &\leq&\max\limits_{0\leq t\leq\tau}\|(\theta_\varepsilon-\theta)\nabla\phi\|_{H^{-\frac{\alpha}{2}}}\int_0^\tau(\|u_\varepsilon\|_{L^2}
   +\|\Lambda^{\frac{\alpha}{2}}u_\varepsilon\|_{L^2})dt \nonumber\\
    &\leq&C(\tau)\|\theta_0\|_{L^2}\max\limits_{0\leq t\leq T}\|(\theta_\varepsilon-\theta)\cdot\nabla\phi\|_{H^{-\frac{\alpha}{2}}}\rightarrow
    0,\label{i1}
\end{eqnarray}
where we have used the fact (\ref{0012}) and (\ref{0025}).

Then, by \eqref{0024}, (\ref{0025}) and \eqref{i1}, we have
\begin{equation}\nonumber
\lim\limits_{\varepsilon\rightarrow0}\int_0^\tau\int_{\mathbb{R}^N}u_\varepsilon\theta_\varepsilon\cdot\nabla\phi
dxdt=\int_0^\tau\int_{\mathbb{R}^N}u\theta\cdot\nabla\phi dxdt,
\end{equation}
which, together with \eqref{0022}, completes the proof of Theorem
\ref{thm1.4}.

{\bf Proof of Theorem \ref{thm1.6}} Let $\theta_1$ and $\theta_2$ be
two solutions to the system \eqref{1.1} with the velocities $u_1$
and $u_2$, respectively. The difference $\theta=\theta_1-\theta_2$
satisfies
\begin{equation}\label{60}
\partial _t\theta +div(u_1
\theta)+div(u\theta_2)+\nu\Lambda^\alpha\theta=0,
\end{equation}
where $u=u_1-u_2$. Clearly, $\theta(x,0)=0$.

Now multiply both sides of (\ref{60}) by $\Lambda^{-1}\theta$ and
integrate by parts, one get
\begin{equation}\label{65}
\frac{d}{dt}\|\Lambda^{-\frac{1}{2}}\theta\|_{L^2}^2+\nu\|\Lambda^{-\frac{1}{2}}
(\Lambda^{\frac{\alpha}{2}})\theta\|_{L^2}^2\leq
|\int_{\mathbb{R}^N}(u_1\theta)\cdot(\nabla(\Lambda^{-1}\theta))dx|
+|\int_{\mathbb{R}^N}(u\theta_2)\cdot(\nabla(\Lambda^{-1}\theta))dx|.
\end{equation}
By the H\"{o}lder inequality, we have
\begin{eqnarray}\label{61}
% \nonumber to remove numbering (before each equation)
  |\int_{\mathbb{R}^N}(u_1\theta)\cdot(\nabla(\Lambda^{-1}\theta))dx| &\leq&\|u_1\|_{L^q}\|\theta\|_{L^p}
  \|\nabla(\Lambda^{-1}\theta)\|_{L^p} \nonumber\\
    &\leq&\|\theta_1\|_{L^q}\|\theta\|_{L^p}\|\nabla(\Lambda^{-1}\theta)\|_{L^p},
\end{eqnarray}
and
\begin{eqnarray}\label{62}
% \nonumber to remove numbering (before each equation)
|\int_{\mathbb{R}^N}(u\theta_2)\cdot(\nabla(\Lambda^{-1}\theta))dx|
&\leq&\|u\|_{L^p}\|\theta_2\|_{L^q}
  \|\nabla(\Lambda^{-1}\theta)\|_{L^p} \nonumber\\
    &\leq&\|\theta_2\|_{L^q}\|\theta\|_{L^p}\|\nabla(\Lambda^{-1}\theta)\|_{L^p},
\end{eqnarray}
where $\frac{1}{q}+\frac{2}{p}=1$.

Thanks to the fact
$\widehat{\nabla(\Lambda^{-1})}=(\widehat{\mathcal {R}_1},
\widehat{\mathcal {R}_2},\cdots, \widehat{\mathcal {R}_N})$, we get
from \eqref{61} and \eqref{62} that
\begin{equation}\label{63}
  |\int_{\mathbb{R}^N}(u_1\theta)\cdot(\nabla(\Lambda^{-1}\theta))dx|
+|\int_{\mathbb{R}^N}(u\theta_2)\cdot(\nabla(\Lambda^{-1}\theta))dx|\leq
C(\|\theta_1\|_{L^q}+\|\theta_2\|_{L^q})\|\theta\|^2_{L^p},
\end{equation}
Using \eqref{001} in Lemma \ref{le2.3}, one get
\begin{equation}\label{64}
\|\theta\|_{L^p}\leq
C\|\Lambda^{\frac{N}{2q}}\theta\|_{L^2}=C\|\Lambda^{\frac{N}{2q}+\frac{1}{2}}(\Lambda^{-\frac{1}{2}}\theta)\|_{L^2}
\leq\|\Lambda^{-\frac{1}{2}}\theta\|_{L^2}^r\|\Lambda^{\frac{\alpha}{2}}(\Lambda^{-\frac{1}{2}}\theta)\|_{L^2}^{1-r}.
\end{equation}
In the last inequality,  we have used fractional type
Gagliardo-Nirenberg  inequality with
$\frac{\alpha-1}{N}-\frac{1}{q}=\frac{\alpha r}{N}$.

Substituting  (\ref{63}) and (\ref{64}) into (\ref{65}), we conclude
\begin{eqnarray}
% \nonumber to remove numbering (before each equation)
  &&\frac{d}{dt}\|\Lambda^{-\frac{1}{2}}\theta\|_{L^2}^2 +\nu\|\Lambda^{-\frac{1}{2}}(\Lambda^{\frac{\alpha}{2}}
  )\theta\|_{L^2}^2\nonumber\\
  &\leq& C(\|\theta_1\|_{L^q}+\|\theta_2\|_{L^q})
  \|\Lambda^{-\frac{1}{2}}\theta\|_{L^2}^{2r}\|\Lambda^{\frac{\alpha}{2}}
  (\Lambda^{-\frac{1}{2}}\theta)\|_{L^2}^{2(1-r)} \nonumber\\
   &\leq&\frac{C}{\nu}(\|\theta_1\|_{L^q}+\|\theta_2\|_{L^q})^{\frac{1}{r}}\|\Lambda^{-\frac{1}{2}}\theta\|_{L^2}^2
   +\frac{\nu}{2}\|\Lambda^{\frac{\alpha}{2}}(\Lambda^{-\frac{1}{2}}\theta)\|_{L^2}^2,\nonumber
\end{eqnarray}
i.e.,
\begin{equation}
\frac{d}{dt}\|\Lambda^{-\frac{1}{2}}\theta\|_{L^2}^2\leq\frac{C}{\nu}(\|\theta_1\|_{L^q}
+\|\theta_2\|_{L^q})^{\frac{1}{r}}\|\Lambda^{-\frac{1}{2}}\theta\|_{L^2}^2.
\end{equation}
The Gronwall inequality implies that $\theta=0$ and we complete the
proof.

\begin{rem}\label{rem1} Furthermore,
when $\nu=0$, if a compactly supported initial condition
$\theta_0\le 0, \not=0$ has a sufficiently big integral $M=-\int
\theta_0dx$, then the non-positive solution to the Cauchy problem
(\ref{1.1}) can not be global in a time. This can be proven by
borrowing the idea used in \cite{BW24}.
\end{rem}
In fact, let $\Theta=-\theta$ and
$\int_{\mathbb{R}^N}|x|^2\Theta(x,t)dx=w(t)$, then we have
\begin{eqnarray}\label{3.10}
\frac{d}{dt}w(t)&=&\int_{\mathbb{R}^N}|x|^2\nabla\cdot(\mathcal
{R}\Theta(x,t)\Theta(x,t))dx  \nonumber\\
&=&-\int_{\mathbb{R}^N}2x(\mathcal{R}(\Theta)\Theta)(x,t)dx
\nonumber\\
%&=&\int_{\mathbb{R}^N}2x\Theta(x,t)P.V.\int_{\mathbb{R}^N}\frac{(y-x)\Theta(y,t)}{|x-y|^{N+1}}dy
%dx \nonumber\\
%&=&\int_{\mathbb{R}^N}\lim\limits_{\varepsilon\rightarrow
%0}\int_{\mathbb{R}^N: |y-x|\ge\varepsilon}\frac{2x(y-x)\Theta(x,t)\Theta(y,t)}{|x-y|^{N+1}} dy dx
%\nonumber\\
%&=&\lim\limits_{\varepsilon\rightarrow
%0}\int\int_{\mathbb{R}^N\times\mathbb{R}^N:|x-y|\ge\varepsilon}\frac{2x(y-x)\Theta(x,t)\Theta(y,t)}{|x-y|^{N+1}}dy dx
%\nonumber\\
%&=&\lim\limits_{\varepsilon\rightarrow
%0}\int\int_{\mathbb{R}^N\times\mathbb{R}^N:|x-y|\ge\varepsilon}\frac{2y(x-y)\Theta(x,t)\Theta(y,t)}{|x-y|^{N+1}}dx dy
%\nonumber\\
% &=&\lim\limits_{\varepsilon\rightarrow
%0}\int\int_{\mathbb{R}^N\times\mathbb{R}^N:|x-y|\ge\varepsilon}\frac{\Theta(x,t)\Theta(y,t)}{|x-y|^{N-1}}
%[\frac{x(y-x)
%+y(x-y)}{|x-y|^2}]dx dy  \nonumber\\
&=&-C_N\int_{\mathbb{R}^N}\int_{\mathbb{R}^N}\frac{(x-y)\cdot
(x-y)\Theta(x,t)\Theta(y,t)}{|x-y|^{N-1}}dx
dy.\nonumber\\
&=&-C_N\int_{\mathbb{R}^N}\int_{\mathbb{R}^N}\frac{\Theta(x,t)\Theta(y,t)}{|x-y|^{N-1}}dx
dy.
\end{eqnarray}
where we have used the property \eqref{prop hil} of the Riesz
transform.

Let $M=-\int_{\mathbb{R}^N}\theta_0dx=\int_{\mathbb{R}^N}\Theta dx,
J_N=\int_{\mathbb{R}^N}\int_{\mathbb{R}^N}|x-y|^{-(N-1)}\Theta(x,t)\Theta(y,t)dx
dy$, then we have
\begin{eqnarray}
M^2&=&\int_{\mathbb{R}^N}\int_{\mathbb{R}^N}\Theta(x,t)\Theta(y,t)dx
dy\nonumber\\
&=&\int_{\mathbb{R}^N}\int_{\mathbb{R}^N}(\Theta(x,t)\Theta(y,t))^{\frac{N-1}{N+1}}|x-y|^{\frac{2(N-1)}{N+1}}
(\Theta(x,t)\Theta(y,t))^{\frac{2}{N+1}}|x-y|^{-\frac{2(N-1)}{N+1}}dx
dy
\nonumber\\
&=&(\int_{\mathbb{R}^2}\int_{\mathbb{R}^N}\Theta(x,t)\Theta(y,t)|x-y|^2dx
dy)^{\frac{N-1}{N+1}}(\int_{\mathbb{R}^N}\int_{\mathbb{R}^N}\Theta(x,t)\Theta(y,t)|x-y|^{-(N-1)}dx
dy)^{\frac{2}{N+1}}
\nonumber\\
&=&\int_{\mathbb{R}^N}\int_{\mathbb{R}^N}\Theta(x,t)\Theta(y,t)(|x|^{2}-x\cdot
y-y\cdot x+|y|^2)dx dy)^\frac{N-1}{N+1}J_N^{\frac{2}{N+1}}
\nonumber\\
&=&(2Mw-2|\int_{\mathbb{R}^2}x\Theta(x,t)dx|^2)^{\frac{N-1}{N+1}}J_N^{\frac{2}{N+1}}
\nonumber\\
&\le&(2Mw)^{\frac{N-1}{N+1}}J_N^{\frac{2}{N+1}},\nonumber
\end{eqnarray}
which implies
\begin{equation}\label{3.12}
2^{-\frac{N-1}{2}}M^{\frac{N+3}{2}}w^{-\frac{N-1}{2}}\leq J_N,
\end{equation}
Combing the above inequalities (\ref{3.10}) and (\ref{3.12}), we
know
\begin{equation}
\frac{dw}{dt}\leq
-C_N2^{-\frac{N-1}{2}}M^{\frac{N+3}{2}}w^{-\frac{N-1}{2}},
\label{inen1}
\end{equation}
If we assume the right-hand side of the inequality \eqref{inen1} is
strictly negative for $t=0$, then it is always strictly negative for
some finite $t>0$. Hence $w(t)$ will be negative for some finite
$t$, which is a contradiction with $w(t)\ge 0$. This completes the
proof of Remark \ref{rem1}.
%%%%%%%%%%%%%%%%%%%%%%%%%%%%%%%%%%%%%%%%%%%%%%%%%%%%%%%%%%%%%%%%%%%%%%%%%%%%%%%%%%%%%%%%%%%%%%%%%%%%第五部分
\section{Asymptotic behavior: The proof of Theorem \ref{thm1.7}} \label{sec:5}
In this section, we prove Theorem \ref{thm1.7} by using Fourier
splitting method, which was used first by Schonbek
\cite{schonbek2,schonbek3} and then used in \cite{23, zhou} to
obtain decay rate in the context of the usual quasi-geostrophic
equations. It should be pointed out that the present proofs could be
extended to the system for the case $\alpha\in (0,2]$ provided there
were on a priori bound of the derivatives of the solutions in the
space $L^2$. For the global weak solution, the similar decay rate
estimate can be also obtained by using the retarded mollification
technique used in \cite{caffarelli, 23, schonbek2}.

{\bf Proof of the Theorem \ref{thm1.7}}: We will establish the decay
estimate by employing the Fourier splitting method.

First we claim that $\theta$ satisfies the following a priori
estimate
\begin{equation}\label{5.24}
 |\hat{\theta}(\xi,t)|\leq
 \|\theta_0\|_{L^1}+|\xi|\int_0^t\|\theta(\tau)\|_{L^2}^2d\tau.
\end{equation}
In fact, we have from (\ref{1.1})
\begin{equation}\label{5.25}
  \partial_t\hat{\theta}+\nu |\xi|^{2\alpha}\hat{\theta}=-\widehat{div(u\theta)},
\end{equation}
and we estimate the right-hand side of (\ref{5.25}) as follows
\begin{equation}\label{5.26}
|
-\widehat{div(u\theta)}|=|\xi\widehat{u\theta}|=|\xi||\widehat{u\theta}|=|\xi|\|u\|_{L^2}\|\theta\|_{L^2}\leq
|\xi|\|\theta(t)\|_{L^2}^2.
\end{equation}
After integrating (\ref{5.25}) and using \eqref{5.26}, we obtain
\eqref{5.24}.

Now we want to obtain the decay estimate $\|\theta(t)\|_{L^2}$.
Multiplying both sides of (\ref{1.1}) by $\theta(t)$ and integrating
in $\mathbb{R}^N$, one get
\begin{equation}\label{5.28}
    \frac{1}{2}\frac{d}{dt}\int_{\mathbb{R}^N}| \theta |^2dx+\nu\int_{\mathbb{R}^N}
    |\Lambda^{\frac{\alpha}{2}}\theta|^2dx=-\int_{\mathbb{R}^N}div(u\theta)\theta
    dx=-\frac 12\int_{\mathbb{R}^N} \theta^2\Lambda\theta
    dx\leq 0,
\end{equation}
which gives, by the Plancherel's theorem, that
\begin{eqnarray}
&&\|\theta\|_{L^2}\le \|\theta_0\|_{L^2},\label{l2}\\
&&\frac{d}{dt}\int_{\mathbb{R}^N}|\hat{\theta}|^2d\xi+2\nu\int_{\mathbb{R}^N}|\xi|^\alpha|\hat{\theta}|^2d\xi=
 -\int_{\mathbb{R}^N}\widehat{div(u\theta)\bar{\widehat{\theta}}}d\xi\leq
 0.\label{5.29}
\end{eqnarray}
Let introduce $B(t)=\{\xi\in \mathbb{R}^N; |\xi|\leq M(t)\}$ with
$M(t)>0$ to be determined appropriately below and $B(t)^c$ is the
complement of $B(t)$. By \eqref{l2}, we can estimate the second term
in the left hand side of (\ref{5.29})
\begin{equation}
\int_{\mathbb{R}^N}|\xi|^\alpha|\hat{\theta}|^2d\xi\geq
\int_{B(t)^c}|\xi|^\alpha|\hat{\theta}|^2d\xi\geq
M^\alpha(t)\int_{B(t)^c}|\hat{\theta}|^2d\xi
=M^\alpha(t)\int_{\mathbb{R}^N}|\hat{\theta}|^2d\xi
-M^\alpha(t)\int_{B(t)}|\hat{\theta}|^2d\xi,\label{5.30}
\end{equation}
Combining \eqref{5.29} and \eqref{5.30}, using \eqref{5.24} and
\eqref{l2}, we get
\begin{eqnarray}
&&\frac{d}{dt}\int_{\mathbb{R}^N}|\hat{\theta}|^2d\xi+2M^\alpha(t)\nu\int_{\mathbb{R}^N}|\hat{\theta}|^2d\xi\nonumber\\
&\leq&
CM^\alpha(t)\int_0^{M(t)}\big(\|\theta_0\|_{L^1}+r\int_0^t\|\theta(\tau)\|_{L^2}^2d\tau\big)^2r^{N-1}dr\nonumber\\
&\le&CM^\alpha(t)\int_0^{M(t)}\big(\|\theta_0\|_{L^1}^2+r^2t\int_0^t\|\theta(\tau)\|_{L^2}^4d\tau\big)
r^{N-1}dr.\label{5.31}
\end{eqnarray}
Integrating (\ref{5.31}), we get
\begin{eqnarray}\label{5.32}
% \nonumber to remove numbering (before each equation)
  &&e^{2\nu\int_0^tM^\alpha(\tau)d\tau}\int_{\mathbb{R}^N}|\hat{\theta}|^2d\xi \nonumber\\
  &\leq& \|\theta_0\|_{L^2}^2+C\int_0^te^{2\nu\int_0^sM^\alpha(\tau)d\tau}\big(\|\theta_0\|_{L^1}^2M^{N+\alpha}(s)\nonumber\\
  &&+sM^{N+2+\alpha}(s)\int_0^s\|\theta(\tau)\|_{L^2}^4d\tau\big)ds.
\end{eqnarray}
Now we take $M^\alpha(t)=\frac{1}{2\beta\nu(t+1)}$ and thus
$e^{2\nu\int_0^tM^\alpha(\tau)d\tau}=(1+t)^{\frac{1}{\beta}}$. From
(\ref{5.32}) and \eqref{l2}, we get
\begin{eqnarray}
&&(1+t)^\frac{1}{\beta}\int_{\mathbb{R}^N}|\hat{\theta}|^2d\xi \nonumber\\
 &\leq&\|\theta_0\|_{L^2}^2+C\int_0^t(1+s)^{\frac{1}{\beta}}\{\|\theta_0\|_{L^1}^2
 (\frac 1{2\alpha\nu(1+s)})^{\frac{N+\alpha}\alpha}\nonumber\\
&&+s(\frac{1}{2\alpha\nu(s+1)})^{\frac{{N+2+\alpha}}{\alpha}}\int_0^s\|\theta(\tau)\|_{L^2}^4d\tau\}
ds\nonumber\\
&\leq&\|\theta_0\|_{L^2}^2
+C\int_0^t\|\theta_0\|_{L^1}^2(\frac{1}{2\beta\nu})^{\frac{{N+\alpha}}{\alpha}}(1+s)^{-\frac{N+\alpha}{\alpha}
 +\frac{1}{\beta}}ds\nonumber\\
 &&+C\int_0^t(\frac{1}{2\beta\nu})^{\frac{{N+2+\alpha}}{\alpha}}(1+s)^{-\frac{N+2+\alpha}{\alpha}
 +\frac{1}{\beta}+2}ds\|\theta_0\|_{L^2}^4.\label{inebeta}
\end{eqnarray} Since $N>2$ and $1\le \alpha \le 2$, we take $\frac
1\beta=\frac{N+2-2\alpha}\alpha-\epsilon$ for some small
$\epsilon>0$, and hence $\frac 1\beta-\frac{N+2-2\alpha}\alpha<0$
and $\frac1\beta-\frac N\alpha<0$. Thus, from \eqref{inebeta}, we
obtain
\begin{eqnarray}
&&(1+t)^{-\frac{1}{\beta}}\|\theta(t)\|_{L^2}^2\nonumber\\
&\leq&
C\|\theta_0\|_{L^2}^2+C\|\theta_0\|_{L^1}^2(\frac{1}{2\beta\nu})^{\frac{{N+\alpha}}{\alpha}}\frac
1{\frac
N\alpha-\frac1\beta}+C(\frac{1}{2\beta\nu})^{\frac{{N+2+\alpha}}{\alpha}}\|\theta_0\|_{L^2}^4
\frac1{\frac{N+2-2\alpha}\alpha-\frac 1\beta},\nonumber
\end{eqnarray}
which gives the following decay rate in time
\begin{equation}\label{5.34}
 \|\theta(t)\|_{L^2}^2\leq C(1+t)^{-(\frac{N+2-2\alpha}{\alpha}-\epsilon)}
\end{equation}
for some $\epsilon$ sufficiently small. Here the constant $C$
depends upon $L^1$ and $L^2$ norms of $\theta_0$.

Next we obtain the decay estimate on $\|\theta(t)\|_{L^p}, p>2$ by
using the method used in \cite{castro}.

Multiplying both sides of (\ref{1.1}) by
$|\theta(t)|^{p-2}\theta(t)$, integrating in $\mathbb{R}^N$ and
applying \eqref{le003} in Lemma \ref{le003}, one get
\begin{equation}\label{5.28p}
    \frac{1}{p}\frac{d}{dt}\int_{\mathbb{R}^N}| \theta |^pdx+\nu\frac 2p\int_{\mathbb{R}^N}
    |\Lambda^{\frac{\alpha}{2}}|\theta|^{\frac
    p2}|^2dx\le-\int_{\mathbb{R}^N}div(u\theta)|\theta|^{p-2}\theta
    dx=-\frac 12\int_{\mathbb{R}^N} |\theta|^p\Lambda\theta
    dx\leq 0.
\end{equation} Using Gagliardo-Nirenberg inequality, we have
\begin{equation}\label{5.29p}
    \frac{d}{dt}\int_{\mathbb{R}^N}| \theta |^pdx\le -2\nu C(\int_{\mathbb{R}^N}
    |\theta|^{\frac{pN}{N-\alpha}}dx)^{\frac{N-\alpha}N}
\end{equation} with $C$ depending on $\alpha$ and $N$. By
interpolation we get
\begin{equation}\label{5.30p}\|\theta\|_{L^p}\le\|\theta\|_{L^2}^{1-\gamma}(\int_{\mathbb{R}^N}
    |\theta|^{\frac{pN}{N-\alpha}}dx)^{\gamma\frac{N-\alpha}{pN}},
    \gamma=\frac{N(p-2)}{N(p-2)+2\alpha}.\end{equation} Putting
    \eqref{5.30p} into \eqref{5.29p}, we have
\begin{equation}\label{5.31p}
    \frac{d}{dt}\|\theta(t)\|_{L^p}^p+2C\nu\|\theta\|_{L^2}^{p-\frac p\gamma}\|\theta\|_{L^p}^{\frac p\gamma}\le
    0.
    \end{equation}
Since $\gamma\in(0, 1)$ and $\|\theta(t)\|_{L^2}\le
\|\theta_0\|_{L^2}$, from \eqref{5.31p}, we obtain
\begin{equation}\label{5.32p}
    \frac{d}{dt}\|\theta(t)\|_{L^p}^p+2C\nu\|\theta_0\|_{L^2}^{p-\frac p\gamma}\|\theta\|_{L^p}^{\frac p\gamma}\le
    0.
    \end{equation} which, by integration, give
    \begin{eqnarray}\|\theta(t)\|_{L^p}\le
    \|\theta_0\|_{L^p}\Big(1+\frac{1-\gamma}\gamma\frac{2\nu
    C\|\theta_0\|_{L^p}^{\frac p\gamma-p}}{\|\theta_0\|_{L^2}^{\frac
    p\gamma-p}}t\Big)^{-\frac\gamma{p(1-\gamma)}}.\label{lpp}
    \end{eqnarray}

Finally, we need to estimate $\|\nabla\theta\|_{L^2}$.

Multiplying both sides of (\ref{1.1}) by $\Lambda^2\theta(t)$ and
integrating in $\mathbb{R}^N$, we get
\begin{equation}\label{5.11}
 \frac{1}{2}\frac{d}{dt}\int_{\mathbb{R}^N}\|\Lambda\theta\|_{L^2}^2dx+\nu\int_{\mathbb{R}^N}\|\Lambda^{\frac{\alpha}{2}+1}\theta(t)\|^2dx
 =-\int_{\mathbb{R}^N}div(u\theta)\Lambda^2\theta dx.
\end{equation}
The right-hand side of (\ref{5.11}) can be estimated by
\begin{eqnarray}\label{5.12}
  |\int_{\mathbb{R}^N}div(u\theta)\Lambda^2\theta
  dx|&=
  &|\int_{\mathbb{R}^N}(\xi_1\widehat{u_1\theta}(\xi)+\xi_1\widehat{u_2\theta}(\xi)+\cdots
   +\xi_1\widehat{u_N\theta}(\xi))
  |\xi|^2\hat{\theta}(\xi)d\xi|\nonumber\\
  &\leq&\sum\limits_{i=1}^N\int_{\mathbb{R}^N}|\xi|^{2-\frac{\alpha}{2}}|\widehat{\theta
  u_i}(\xi)||\xi|^{\frac{\alpha}{2}+1}|\hat{\theta}(\xi)|d\xi \nonumber\\
  &\leq&\sum\limits_{i=1}^N\|\Lambda^{2-\frac{\alpha}{2}}(\theta
  u_i)\|_2\|\Lambda^{\frac{\alpha}{2}+1}\theta\|_2 \nonumber\\
  &\leq&\frac{\nu}{4}\|\Lambda^{\frac{\alpha}{2}+1}\theta\|_2^2+\frac{2}{\nu}\sum\limits_{i=1}^N
  \|\Lambda^{2-\frac{\alpha}{2}}(\theta u_i)\|_2^2,
\end{eqnarray}
where we have used the Plancherel and H\"{o}lder inequality.

By the fractional calculus inequality \eqref{lam1} with $r=2$ and
Lemma \ref{plam}, we have
\begin{eqnarray}\label{5.14}
% \nonumber to remove numbering (before each equation)
  \|\Lambda^{2-\frac{\alpha}{2}}(\theta u_i)\|_2 &\leq& C(\|u_i\|_q\|\Lambda^{2-\frac{\alpha}{2}}\theta\|_{p}
  +\|\theta\|_q\|\Lambda^{2-\frac{\alpha}{2}}u_i\|_{p})\nonumber \\
&\leq& C\|\theta\|_q\|\Lambda^{2-\frac{\alpha}{2}}\theta\|_p
\end{eqnarray}
for $i=1,2,\ldots, N$ and $\frac 1p+\frac 1q=\frac 12$. By the
maximum principle $\|\theta\|_{L^q}\leq C\|\theta_0\|_{L^q}$, we
have
\begin{equation}\label{5.15}
 \|\Lambda^{2-\frac{\alpha}{2}}(\theta u_i)\|_2\leq
 C(\theta_0)\|\Lambda^{2-\frac{\alpha}{2}}\theta\|_p
\end{equation}
for $i=1,2,\ldots, N$.

Using Lemma \ref{le2.3}, we get
\begin{equation}\label{5.16}
  \|\Lambda^{{2-\frac{\alpha}{2}}}\theta \|_p\leq C(\theta_0)\|\Lambda^{{2-\frac{\alpha}{2}}+\delta}\theta
  \|_2,\quad (i=1,2,\ldots, N),
\end{equation}
where $\frac{1}{p}=\frac{1}{2}-\frac{\delta}{N}$ and $0<\delta<N$.

Combining (\ref{5.11}), (\ref{5.12}), (\ref{5.14}), (\ref{5.15}) and
(\ref{5.16}), one get
\begin{equation}\label{5.17}
  \frac{1}{2}\frac{d}{dt}\|\Lambda\theta\|_{L^2}^2+\frac{3}{4}\nu\|\Lambda^{\frac{\alpha}{2}}\theta\|
  _{L^2}^2\leq
  C(\theta_0)\|\Lambda^{2-\frac{\alpha}{2}+\delta}\theta\|_{L^2}.
\end{equation}
For the right-hand of (\ref{5.17}), we have
\begin{eqnarray}\label{5.18}
% \nonumber to remove numbering (before each equation)
 &&\|\Lambda^{2-\frac{\alpha}{2}+\delta}\theta(t)\|_{L^2}^2 \nonumber\\
 &=& \int_{B(t)}|\xi|^{4-\alpha
+2\delta}|\hat{\theta}(t)|^2d\xi+\int_{B(t)^c}|\xi|^{4-\alpha+2\delta}|\hat{\theta}(t)|^2
d\xi \nonumber\\
&\leq&
M^{4-\alpha+2\delta}(t)\|\theta(t)\|_{L^2}^2+\int_{B(t)^c}\frac{|\xi|^{4-\alpha+2\delta}}{|\xi|
  ^{2(\frac{\alpha}{2}+1)}}||\xi|^{\frac{\alpha}{2}+1}\hat{\theta}(t)|^2d\xi\nonumber\\
  &\leq& M^{4-\alpha+2\delta}(t)\|\theta(t)\|_{L^2}^2+\int_{B(t)^c}|\xi|^{-2(\frac{\alpha}{2}+1)+(4-\alpha+2\delta)}
 ||\xi|^{\frac{\alpha}{2}+1}\widehat{\theta(t)}|^2d\xi\nonumber \\
&\leq&
M^{4-\alpha+2\delta}(t)\|\theta(t)\|_{L^2}^2+M^{2(1-\alpha+\delta)}(t)
\|\Lambda^{\frac{\alpha}{2}+1}\theta(t)\|_{L^2}^2.
\end{eqnarray}
Because $\alpha>1$, we can choose $M$ large enough such that
$M^{2(1-\alpha+\delta)}(t)< \frac{\nu}{4C(\theta_0)}$. It follows
from \eqref{5.18} that
\begin{equation}\label{5.19}
  C(\theta_0)\|\Lambda^{2-\frac{\alpha}{2}+\delta}\theta(t)\|_{L^2}^2dx\leq
  C(\theta_0)M^{4-\alpha+2\delta}\|\theta(t)\|_{L^2}^2+\frac{\nu}{4}\|\Lambda^{\frac{\alpha}{2}+1}\theta\|_{L^2}^2.
\end{equation}
Putting \eqref{5.19} into \eqref{5.17}, we have
\begin{equation}
\frac{d}{dt}\|\Lambda\theta\|_{L^2}^2+\nu\|\Lambda^{1+\frac{\alpha}{2}}\theta\|_{L^2}^2\leq
C\|\theta(t)\|_{L^2}^2. \label{5.20}\end{equation} Moreover, we have
\begin{eqnarray}
% \nonumber to remove numbering (before each equation)
 \|\Lambda^{\frac{\alpha}{2}+1}\theta\|_{L^2}^2&\geq& \int_{B(t)^c}|\xi|^{2(\frac{\alpha}{2}+1)}|\hat{\theta}|^2d\xi\nonumber\\
  &\geq&M^\alpha(t)\int_{B(t)^c}|\xi|^2|\hat{\theta}|^2d\xi \nonumber\\
 &=&M^\alpha(t)\|\Lambda\theta\|_{L^2}^2-M^\alpha(t)\int_{B(t)}|\xi|^2|\hat{\theta}|^2d\xi,\nonumber
\end{eqnarray}
which yields to
\begin{equation}\label{5.21}
  \|\Lambda^{\frac{\alpha}{2}+1}\theta\|_{L^2}^2\geq
  M^\alpha(t)\|\Lambda
  \theta\|_{L^2}^2-M^{\alpha+2}(t)\|\theta(t)\|_{L^2}^2.
\end{equation}
Combining \eqref{5.20} and \eqref{5.21}, we have \begin{equation}
\frac{d}{dt}\|\Lambda\theta\|_{L^2}^2+\nu
M^\alpha(t)\|\Lambda\theta\|_{L^2}^2\leq
C\|\theta\|_{L^2}^2+CM^{\alpha+2}(t)\|\theta\|_{L^2}^2,\quad \forall
M.\label{la1}
\end{equation}
It's obvious that we need to the obtained estimate
$\|\theta(t)\|_{L^2}^2$.

Putting (\ref{5.34}) into (\ref{la1}) and letting $M(t)=M$ be a
constant large enough such that $M^{\alpha+2}>1$, we have
\begin{equation}\label{5.22}
    \frac{d}{dt}\|\Lambda\theta\|_{L^2}^2+\nu M^\alpha\|\Lambda\theta\|_{L^2}^2\leq
    CM^{\alpha+2}(1+t)^{-\frac{1}{\beta}}.
\end{equation}
Then, by multiplying $e^{M^\alpha t}$ on (\ref{5.22}) and
integrating with respect to $t$, we obtain
\begin{eqnarray}\label{5.23}
% \nonumber to remove numbering (before each equation)
 \|\Lambda\theta\|_{L^2}^2&\leq&e^{-M^\alpha t}\|\Lambda\theta_0\|_{L^2}^2+CM^{\alpha+2}\int_0^te^
 {-M^\alpha(t-s)}(1+s)^{-\frac1\beta}ds \nonumber\\
 &\leq& e^{-M^\alpha t}\|\Lambda
 \theta_0\|_{L^2}^2+CM^{\alpha+2}(1+t)^{-\frac1\beta},
\end{eqnarray}
where we have used the estimate
$$\int_0^t e^{-M^\alpha(t-s)}(1+s)^{-\frac{1}{\alpha}}ds\leq
C(1+t)^{-\frac{1}{\alpha}}, t>0.$$ Thanks to the fact
$\|\nabla\theta\|_{L^2}=\|\Lambda\theta\|_{L^2}$, it follows from
\eqref{5.23} that \begin{eqnarray}\|\nabla\theta\|_{L^2}\leq
(1+t)^{-\frac12(\frac{N+2-2\alpha}{\alpha}-\epsilon)}.\label{decayp}\end{eqnarray}
The estimates \eqref{5.34}, \eqref{lpp} and \eqref{decayp} give the
desire decay estimates. This completes the proof of Theorem
\ref{thm1.7}.
%%%%%%%%%%%%%%%%%%%%%%%%%%%%%%%%%%%%%%%%%%%%%%%%%%%%%%%%%%%%%%%%%%%%%%%%%%%%%
%%%%%%%%%%%%%%%%%%%%%%%%%%%%%%%%%%%%%%%%%%%%%%%%%%%%%%%%%%%%%%%%%%%%%%%%%%%%%%%%%%%%%%%%%%%%%%%
%%%%%%%%%%%%%%%%%%%%%%%%%%%%%%%%%%%%%%%%%%%%%%%%%%%%%%%%%%%%%%%%%%%%%%%%%%%%%%%%%%%%%%%%%%%%
\paragraph{Acknowledgments.}
The research of Prof. S. Wang was supported by National Basic
Research Program of China (973 Program, 2011CB808002), the NSFC
(11071009) and PHR-IHLB (200906103).

\end{document}